\documentclass[10pt]{amsart}
\usepackage{graphicx,amssymb,amsfonts,amsmath,amsthm,newlfont}
\usepackage{epsfig,url}
\usepackage{color}

\usepackage[all,2cell]{xy} \UseAllTwocells \SilentMatrices

\vfuzz2pt 
\hfuzz2pt 
\newtheorem{thm}{Theorem}[section]
\newtheorem{cor}[thm]{Corollary}
\newtheorem{lem}[thm]{Lemma}
\newtheorem{prop}[thm]{Proposition}
\theoremstyle{definition}
\newtheorem{defn}[thm]{Definition}

\newtheorem{example}[thm]{Example}

\theoremstyle{remark}
\newtheorem{rem}[thm]{Remark}

\numberwithin{equation}{section}

\newcommand{\Z}{\mathbb Z}
\newcommand{\C}{\mathbb C}

\newcommand{\R}{\mathbb R}

\newcommand{\Lef}{\mathbb{L} }

\newcommand{\GE}{\mathbb{G} }

\newcommand{\XX}{\mathfrak{X}}

\newcommand{\M}{\mathcal{M}}

\newcommand{\ssl}{\mathfrak{sl}}

\newcommand{\MI}{\mathcal{MI}}

\newcommand{\llam}{\underline{\lambda}}

\newcommand{\Q}{\mathbb Q}

\newcommand{\s}{\mathbf{s}}

\newcommand{\To}{\longrightarrow}

\newcommand{\SL}{\mathrm{SL}}

\newcommand{\Or}{\mathcal{O}}

\newcommand{\V}{\mathcal{V}}

\newcommand{\VV}{\mathbb{V}}

\newcommand{\mm}{\mathfrak{m} }

\newcommand{\HH}{\mathfrak{H} }

\newcommand{\id}{\mathrm{id} }

\newcommand{\LL}{\mathbb{L}}

\newcommand{\sfh}{\mathsf{h}}
\newcommand{\sfw}{\mathsf{w}}

\newcommand{\HM}{\mathcal{HM}}

\newcommand{\comp}{\mathrm{comp}}


\addtolength{\hoffset}{-0.2in} \addtolength{\textwidth}{0.2in}
\addtolength{\voffset}{-0.2in} \addtolength{\textheight}{0.3in}

\begin{document}
\author{Francis Brown}
\begin{title}[Real analytic cusp forms  for $\mathrm{SL}_2(\mathbb{Z})$] { A class of non-holomorphic modular forms III: Real analytic cusp forms for $\mathrm{SL}_2(\mathbb{Z})$}\end{title}
\maketitle
\begin{abstract} We define canonical real analytic versions of modular forms of integral weight for the full modular group,  generalising real analytic Eisenstein series.  They are  harmonic Maass waveforms with poles at the cusp, whose   Fourier coefficients  involve  periods and quasi-periods of  cusp forms, which are conjecturally transcendental.   In particular, we settle the question of finding  explicit  `weak harmonic lifts' for every eigenform of integral weight  $k$ and level one. We  show that mock modular forms of integral weight are algebro-geometric and have Fourier coefficients proportional  to $n^{1-k}(a'_n + \rho a_n)$ for $n\neq 0$, where $\rho$ is the normalised permanent of the period matrix of the corresponding motive, and $a_n, a'_n$ are the Fourier coefficients of a Hecke eigenform and a weakly holomorphic Hecke eigenform, respectively.  More generally, this  framework provides a conceptual explanation for the  algebraicity of the coefficients of mock modular forms in the CM case. 
\end{abstract}

\section{Introduction}
Let $\HH$ denote the upper-half plane with the usual left action by $\Gamma = \SL_2(\Z)$. This paper is the third in a series \cite{ZagFest},  \cite{EqEis}  studying subspaces of the vector space $\mathcal{M}^!$ of real analytic functions $f: \HH \rightarrow \C$ which are  modular of weights $(r,s)$ for $r,s  \in \Z$, i.e.,  
\begin{equation}
\label{introfgammaz}
f(\gamma z)  = (cz+d)^r (c \overline{z}+d)^s f(z) \qquad \hbox{ for all } \quad \gamma = \begin{pmatrix} a & b \\ c & d \end{pmatrix} \in \Gamma\ , z \in \HH
\end{equation}
and which furthermore admit an expansion of the form 
\begin{equation} 
\label{fexp} 
 f= \sum_{|k| \leq M}\LL^k  \, \Big(   \sum_{m,n\geq - N}  a^{(k)}_{m,n} q^m \overline{q}^n \Big)   \qquad \hbox{where}  \quad a^{(k)}_{m,n} \in \C \ ,
\end{equation}
for $M, N \in \Z$, 
where $q= \exp(2\pi i z)$ and $\Lef = \log |q| =- 2\pi \mathrm{Im}(z)$. 
The space $\mathcal{M}^!$ is equipped with differential operators $\partial, \overline{\partial}$, closely related to Maass' raising and lowering operators, and a Laplacian $\Delta$.
In \cite{ZagFest}, we defined a subspace $\MI^! \subset \mathcal{M}^!$  of \emph{modular iterated integrals}, generated  from weakly holomorphic modular forms by repeatedly taking primitives with respect to $\partial$ and $\overline{\partial}$. In this installment, we  describe the subspace $\MI^!_1 \subset \MI^!$ of modular iterated integrals of \emph{length  one}. These  correspond  to a  modular incarnation  of the abelian quotient of the relative completion of the fundamental group   \cite{HaGPS, MMV}  of the moduli stack of elliptic curves $\mathcal{M}_{1,1}$. They span the first level in an infinite tower of non-abelian or `mixed' modular functions whose general definition was given in \cite{MMV}, \S18.5. In  \cite{EqEis} we worked out the Eisenstein quotient of this construction; here we spell out  the length one part of the general case.  
\vspace{0.05in}

Examples of   functions  in the class $\MI_1^!$ are given by  real analytic Eisenstein series, which are well-known. 
 Let $r,s\geq 0$ such that  $w=r+s \geq 2$ is even, and define
$$\mathcal{E}_{r,s} = \frac{w!}{(2\pi i )^{w+2}} \frac{1}{2} \sum_{(m,n) \in \Z^2 \backslash \{0,0\}} \frac{\Lef}{(mz+n)^{r+1}(m \overline{z}+n)^{s+1}}\ .$$ These functions are real analytic and modular of weights $(r,s)$, and admit an expansion of the form (\ref{fexp}) (with $N=0$). Following the presentation given in \cite{ZagFest} \S4,
they are the unique solutions to  the following system of differential equations:
$$\partial \mathcal{E}_{r,s} = (r+1) \mathcal{E}_{r+1,s-1} \quad \hbox{ for } s\geq 1 $$
$$\overline{\partial} \mathcal{E}_{r,s} = (s+1) \mathcal{E}_{r-1,s+1} \quad \hbox{ for } r\geq 1 $$
and 
$$\partial \mathcal{E}_{w,0} =  \Lef \GE_{w+2} \qquad \hbox{ , } \qquad  \overline{\partial} \mathcal{E}_{0,w} =  \Lef  \overline{\GE_{w+2}} $$
where $\GE_{2k}$ are the Hecke-normalised holomorphic Eisenstein series:
\begin{equation} \label{GEdef} \GE_{2k}=  -\frac{b_{2k}}{4k}  + \sum_{n\geq 1} \sigma_{2k-1}(n) q^n\ , \qquad k\geq 1 \ . 
\end{equation} 
 Since $\partial \Lef^{-1} \mathcal{E}_{w,0} = \GE_{w+2}$,  the functions  $\Lef^{-1} \mathcal{E}_{w,0}$ are modular primitives (with respect to $\partial$)  of holomorphic Eisenstein series, and are annihilated by the Laplacian. 
 
In this paper, we shall construct \emph{real analytic cusp forms} $\mathcal{H}(f)_{r,s}$ which are canonically associated to any Hecke cusp form, and satisfy an analogous system of differential equations.  It is clear from their construction that they are `motivic', in that their coefficients only involve the periods of pure motives  associated to cusp forms \cite{Scholl}. 
The functions  $\mathcal{H}(f)_{r,s}$ generate $\MI^!_1$, and furthermore, they generate the subspace of $\HM^! \subset \mathcal{M}^!$ of eigenfunctions  of the Laplacian. In other words, the overlap between the space  $\M^!$ and the set of Maass waveforms is exactly described by the functions studied  in this paper.

\subsection{Real Frobenius} The essential ingredient in our construction is the  real Frobenius, also known as complex conjugation.  For all $n\in \Z$ let  $M_{n}^!$ denote the space of weakly holomorphic modular forms of weight $n$.
They admit a Fourier expansion 
\begin{equation} \label{introgdef} f =  \sum_{m \geq -N}  a_m (f) q^m \qquad \hbox{where }  \quad a_m(f) \in \C \end{equation}
for $N\in \Z$. 
Although the differential operator $D= q \frac{d}{dq}$ does not preserve modularity, a  well-known result due to Bol implies  that  its powers define linear maps 
$$D^{n+1} : M_{-n}^! \To M^!_{n+2}\ $$
for all $n\geq 0$.  The quotient $M^!_{n+2}/ D^{n+1} M^!_{-n}$ can be interpreted as a space of modular forms of the second kind \cite{SchollKazalicki}, \cite{BH}. Indeed, it is canonically isomorphic to the algebraic de Rham cohomology of the moduli stack of elliptic curves with certain coefficients, and in particular, admits an action by Hecke operators.  
 Furthermore, one shows \cite{Guerzhoy} that every element  $ f\in M^!_{n+2}/ D^{n+1} M^!_{-n}$ has a \emph{unique representative}  $f\in M^!_{n+2}$ such that 
$$\mathrm{ord}_{\infty} f  \leq \dim S_{n+2}\ .$$
This provides a splitting    $M^!_{n+2}  =  D^{n+1} M^!_{-n} \oplus M^!_{n+2}/ D^{n+1} M^!_{-n}$  which is possibly unnatural, but   is canonical.  We shall use this splitting  to provide canonical constructions and uniqueness statements in the theorems below. A purist may prefer to avoid using this splitting  at the expense of working modulo $ D^{n+1} M^!_{-n}$. 

The `\emph{single-valued involution}' is    a canonical Hecke-equivariant map 
  $$\s : M^!_{n+2}  / D^{n+1} M_{-n}^!  \,  \overset{\sim}{\To} \, M^!_{n+2}  / D^{n+1} M_{-n}^!  \ .$$
It exists in much greater generality  \cite{NotesMot} \S4.1 and is induced, via the comparison isomorphism, by complex conjugation on  Betti cohomology.
By the previous remarks, it lifts to an involution on $M^!_{n+2}$. In fact, it  can be written down explicity on each cuspidal  Hecke eigenspace in terms  of a period matrix
\begin{equation}
\mathrm{P}_f = \begin{pmatrix} 
\eta_f^+  & \omega_f^+ \\
i\eta_f^- &  i \omega_f^{-} \\
\end{pmatrix} \quad \in \quad \mathrm{GL}_2(\C)
\end{equation} 
where $\omega^{+}_f, i \omega_f^-$ are the periods and    $\eta^+_f,  i \eta^-_f$ are the quasi-periods  with respect to a basis $f, f'$ of a cuspidal  Hecke eigenspace.  More precisely, we show that
\begin{equation} \label{introsf} \s(f)   = 
  \Big(\frac{  \eta^{+}_f \omega^{-}_f+  \eta^{-}_f \omega^{+}_f   }{\eta^-_f  \omega^+_f - \eta^+_f \omega^-_f }\Big)    \,    f   +  \Big(\frac{2\, \omega^+_f \omega^-_f }{{\eta^+_f\omega^-_f  - \eta^-_f\omega^+_f  } } \Big)\,   f'  \ .   
\end{equation} 
It does not depend on the choice of basis $f,f'$.
The quantity $\omega^+_f \omega^-_f$ is related to the Petersson norm of $f$, the determinant of the period matrix $\det (\mathrm{P}_f)$
is proportional to a power of $2\pi i$.  The quantity $i (  \eta^{+}_f \omega^{-}_f +\eta^{-}_f \omega^{+}_f )$ is   the \emph{permanent} of the period matrix:
$$\s(f) = - \frac{\mathrm{perm}( \mathrm{P}_f) }{\det( \mathrm{P}_f) } f  + \frac{2 i \omega^+_f \omega^-_f}{\det (\mathrm{P}_f) } f'$$ 

\subsection{Summary of results} 

\begin{thm}  Let $n\geq 0$. Let $f$ be a cuspidal Hecke eigenform of weight $n+2$  for $\SL_2(\Z)$. 
There exists a unique family of real analytic modular functions 
$$\mathcal{H}(f)_{r,s} \quad \in \quad \mathcal{M}_{r,s}^!$$
for all $r+s=n$  and $r,s\geq 0$, 
 satisfying the system of differential equations
\begin{eqnarray} 
\partial \mathcal{H}(f)_{r,s}  &=  & (r+1) \,  \mathcal{H}(f)_{r+1,s-1} \qquad \hbox{ if  } s\geq 1 \nonumber \\
\overline{\partial} \mathcal{H}(f)_{r,s}  &=  & (s+1) \,  \mathcal{H}(f)_{r-1,s+1} \qquad \hbox{ if  } s\geq 1  \nonumber
 \end{eqnarray} 
and 
$$\partial \mathcal{H}(f)_{n,0}  =   \Lef f  \qquad , \qquad  \overline{\partial} \mathcal{H}(f)_{0,n}  =  \Lef\, \overline{\s(f)} \ .$$
 The $\mathcal{H}(f)_{r,s}$ are eigenfunctions of the Laplace operator with eigenvalue $-n$. 
 Equivalently, the $\Lef^{-1}\mathcal{H}(f)_{r,s}$ are harmonic: $\Delta \Lef^{-1}\mathcal{H}(f)_{r,s}=0$.
 \end{thm}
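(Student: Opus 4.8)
\emph{Proof proposal.} The plan is to realise the $\mathcal{H}(f)_{r,s}$ as the components of a single real-analytic ``Eichler integral'' attached to $f$, and then to extract the differential equations, the Laplace eigenvalue, and uniqueness from it. Write $k=n+2$. I would start from the classical (multivalued) Eichler integral $E_f(z)=\int_z^{i\infty} f(\tau)\,(X-\tau)^{n}\,d\tau$, a function on $\HH$ with values in $\C[X]_{\le n}$, together with the anti-holomorphic Eichler integral $\overline{E_{\s(f)}}$ of the canonical weakly holomorphic representative $\s(f)\in M^!_{k}$ of the single-valued involute of $f$. The $\Gamma$-monodromy of $E_f$ is the de Rham incarnation of the Eichler--Shimura period cocycle of $f$, and that of $\overline{E_{\s(f)}}$ is the complex conjugate of the period cocycle of $\s(f)$; the defining property of $\s$ --- that it is induced by the real Frobenius on Betti cohomology under the comparison isomorphism --- is precisely the assertion that these two cocycles coincide. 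Hence there is a real-analytic $\C[X]_{\le n}$-valued function $\mathbf E(f)$ on $\HH$ which is modular (for the twisted action on its values), unique up to an honest weakly holomorphic form, whose difference with $E_f$, respectively with $\overline{E_{\s(f)}}$, is holomorphic, respectively anti-holomorphic. I would then \emph{define} $\mathcal{H}(f)_{r,s}$, for $r+s=n$, to be the weight-$(r,s)$ component of $\mathbf E(f)$ in the standard $\ssl_2$-representation $\mathrm{Sym}^n$, normalised by a suitable power of $2\pi i$; that $\mathcal{H}(f)_{r,s}\in\mathcal{M}^!_{r,s}$ with an expansion of the shape \eqref{fexp} is then immediate from the $q$-expansion of an Eichler integral of a weakly holomorphic modular form, the poles of $\s(f)$ at the cusp accounting for $N>0$ in general.

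\textbf{Differential equations and Laplacian.} On the $\mathrm{Sym}^n$-packaging the operators $\partial$ and $\overline\partial$ act, up to explicit scalars, through the raising and lowering operators of $\ssl_2$, which yields the interior relations $\partial\mathcal{H}(f)_{r,s}=(r+1)\mathcal{H}(f)_{r+1,s-1}$ and $\overline\partial\mathcal{H}(f)_{r,s}=(s+1)\mathcal{H}(f)_{r-1,s+1}$ once the binomial normalisation is fixed. Applying $\partial$ once more at the top differentiates $E_f$ past the top of the period polynomial; by Bol's identity $D^{n+1}$ of the Eichler integral of $f$ is a nonzero multiple of $f$, which after normalisation is exactly $\Lef f$, and symmetrically $\overline\partial\mathcal{H}(f)_{0,n}=\Lef\,\overline{\s(f)}$ because the anti-holomorphic part of $\mathbf E(f)$ is built from $\overline{E_{\s(f)}}$. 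The Laplace statement is then formal and identical to the Eisenstein case recalled above: combining the two interior relations gives $\partial\overline\partial\,\mathcal{H}(f)_{r,s}=r(s+1)\,\mathcal{H}(f)_{r,s}$ and $\overline\partial\partial\,\mathcal{H}(f)_{r,s}=(r+1)s\,\mathcal{H}(f)_{r,s}$, and together with the commutation identities between $\Delta$, $\partial$, $\overline\partial$ and $\Lef$ from \cite{ZagFest} (and the boundary values at $(n,0)$ and $(0,n)$) one obtains $\Delta\mathcal{H}(f)_{r,s}=-n\,\mathcal{H}(f)_{r,s}$, which is equivalent to $\Delta\bigl(\Lef^{-1}\mathcal{H}(f)_{r,s}\bigr)=0$ via the standard identity relating $\Delta$ with multiplication by $\Lef$.

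\textbf{Uniqueness.} If two families satisfy the system, their difference $g_{r,s}$ satisfies the homogeneous one. The interior relations express every $g_{r,s}$ in terms of $g_{n,0}$ (apply $\overline\partial$ repeatedly) and reduce the boundary conditions to $\partial g_{n,0}=0$ and $\overline\partial^{\,n+1}g_{n,0}=0$, for $g_{n,0}\in\mathcal{M}^!_{n,0}$ of the form \eqref{fexp}. From $\partial g_{n,0}=0$ together with modularity of weight $(n,0)$ I would deduce, by the description of $\ker\partial$ in \cite{ZagFest}, that $g_{n,0}=\Lef^{-n}\,\overline H$ for a unique $H\in M^!_{-n}$; then $\overline\partial^{\,n+1}g_{n,0}=0$ translates, again by Bol's identity, into $D^{n+1}H=0$, so $H$ is constant, hence zero since $n>0$. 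Therefore $g_{n,0}=0$ and all $g_{r,s}=0$.

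\textbf{Main obstacle.} The real content is the \emph{existence} step: proving that the glued function $\mathbf E(f)$ is genuinely modular, and that its lowering derivative at the bottom is exactly $\Lef\,\overline{\s(f)}$ rather than $\Lef$ times some other weakly holomorphic form. This is where one must invoke, in the precise normalisation, the identification of $\s$ with complex conjugation on Betti cohomology --- equivalently, the period-matrix formula \eqref{introsf} --- since it is exactly the statement that the monodromy obstruction to modularity of $E_f$ is cancelled, in every weight, by that of $\overline{E_{\s(f)}}$. Everything else --- the interior differential relations, the Laplace eigenvalue, and the uniqueness argument --- is then formal, the main remaining bookkeeping being the powers of $2\pi i$ needed for the boundary terms to come out normalised as $\Lef f$ and $\Lef\,\overline{\s(f)}$.
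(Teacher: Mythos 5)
Your proposal is correct and follows essentially the same route as the paper: the paper also produces $\mathcal{H}(f)$ by gluing Eichler integrals so that the period cocycles cancel, only it organises the gluing through the two $\s$-eigenvectors $\omega^{\pm}f'-\eta^{\pm}f$, taking real and imaginary parts of the corrected integrals $\mathcal{F}_{f'}-Q'$ and $\mathcal{F}_{f}-Q$ (its functions $\mathcal{I}_f$, $\mathcal{R}_f$) and then recombining, rather than pairing $E_f$ directly with $\overline{E_{\s(f)}}$; the interior relations, the Laplace eigenvalue via $\Delta_{r,s}=-\partial_{r-1}\overline{\partial}_s+s(r-1)$, and the uniqueness argument via the description of $\ker\partial$ and Bol's identity are likewise the paper's. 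The one sign to watch is that the identity $[C^f]=\overline{[C^{\s(f)}]}$ makes the \emph{difference} (not the sum) of the two Eichler integrals cohomologically trivial, with the odd power of $2\pi i$ in the normalisation then producing the stated boundary equations -- exactly the bookkeeping you flag at the end.
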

The theorem holds also for weak cusp forms, defining a  canonical map 
$$\mathcal{H}_{r,s}: S^!_{n+2} / D^{n+1} M^!_{-n} \To \mathcal{M}^!_{r,s}$$
for all $r+s=n$, with $r, s\geq 0$.  Since $\s (\GE_{n+2} ) = -\GE_{n+2}$, the
 real analytic Eisenstein series  satisfy identical equations except with a difference of sign (for $\overline{\partial} \mathcal{E}_{n,0}$).   This justifies calling the $\mathcal{H}(f)_{r,s}$  \emph{real analytic cusp forms}.
 
 The theorem can be rephrased as follows.  Consider the real analytic vector-valued function $\mathcal{H}(f) : \HH \rightarrow \C[X,Y]$ defined by  $\mathcal{H}(f) = \sum_{r+s=n} \mathcal{H}(f)_{r,s}(X-z Y)^r(X - \overline{z} Y)^s\ .$  It is equivariant for the standard right action of $\Gamma$ on $\C[X,Y]$ and satisfies
 $$ d \mathcal{H}(f)   =  \pi i \,   f(z) (X-z Y)^n dz +  \pi i \,  \overline{\s(f)} (X-\overline{z}Y)^n d \overline{z}\ .$$

The functions $\mathcal{H}(f)_{r,s}$ are given by the following explicit formula. First, for any weakly holomorphic modular form 
(\ref{introgdef}),  write for all $k\geq 0$ 
\begin{equation} \label{gkdefn} 
f^{(k)} = \sum_{n\in \Z\backslash 0 }   \frac{a_n(f)}{(2n)^k} q^n \ . 
\end{equation}  
It is an iterated primitive for $q \frac{d}{dq}$. For all $r,s \geq 0$ with $r+s =n$ define 
  \begin{equation} \label{Rrsdef} R_{r,s}(f) = (-1)^r \binom{n}{r} \sum_{k=s}^{n} \binom{r}{k-s} (-1)^k  \frac{k!}{\Lef^{k}} f^{(k+1)}\ .
  \end{equation} 
\begin{thm} The functions $\mathcal{H}(f)$ have the following  form:
$$\mathcal{H}(f)_{r,s} =  {a_0(f) \,  \over n+1} \Lef +  \alpha_f (-1)^r \binom{n}{r}  \Lef^{-n}  + R_{r,s}(f) + \overline{R_{s,r}(\s(f))}$$
for some uniquely determined  $\alpha_f \in \C$. 
\end{thm}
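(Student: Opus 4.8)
The plan is to deduce the formula from the uniqueness contained in the first theorem above, in the form of the one-form reformulation stated there. Write $\alpha_f\in\C$ for a constant to be determined, put
$$G_{r,s}\ :=\ \frac{a_0(f)}{n+1}\,\Lef\ +\ \alpha_f\,(-1)^r\binom nr\,\Lef^{-n}\ +\ R_{r,s}(f)\ +\ \overline{R_{s,r}(\s(f))}\,,$$
and set $G=\sum_{r+s=n}G_{r,s}\,(X-zY)^r(X-\bar zY)^s$. By the first theorem, $\mathcal{H}(f)$ is a $\Gamma$-equivariant real analytic map $\HH\to\C[X,Y]_n$ with $d\mathcal{H}(f)=\pi i\,f(z)(X-zY)^n\,dz+\pi i\,\overline{\s(f)}(X-\bar zY)^n\,d\bar z$; any two such maps differ by a $z$-independent $\Gamma$-invariant polynomial, hence by $0$ when $n\geq 1$ (the case $n=0$ being empty since $S_2(\SL_2(\Z))=0$). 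It therefore suffices to prove, for a suitable $\alpha_f$, that (I) $dG$ equals the same one-form and (II) $G$ is $\Gamma$-equivariant. For a genuine Hecke cusp form $a_0(f)=0$, so the first term disappears; note also that $n$ is then necessarily even.

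Step (I) rests on the observation that, although the individual $R_{r,s}(f)$ carry negative powers of $\Lef$, the generating polynomial $R(f):=\sum_{r+s=n}R_{r,s}(f)(X-zY)^r(X-\bar zY)^s$ is in fact holomorphic in $z$: substituting $(X-\bar zY)=(X-zY)-(z-\bar z)Y$ together with $\Lef=\pi i(z-\bar z)$ and collapsing the binomial sums, one finds
$$R(f)\ =\ (-1)^n\sum_{k=0}^n\frac{k!}{(\pi i)^k}\binom nk\,f^{(k+1)}(z)\,(X-zY)^{n-k}Y^k\,,$$
the iterated-primitive repackaging of the holomorphic Eichler integral of $f$; this identity is exactly what fixes the coefficients in (\ref{Rrsdef}). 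Using $2Df^{(k+1)}=f^{(k)}$ (so that $f^{(0)}=f-a_0(f)$), a telescoping computation gives $dR(f)=(-1)^n\pi i\,(f-a_0(f))(X-zY)^n\,dz$; similarly $d\overline{R(\s(f))}$ contributes an analogous $d\bar z$-term involving $\s(f)$, the $a_0$-part of $G$ is an explicit multiple of $(X-zY)^{n+1}-(X-\bar zY)^{n+1}$, and the $\alpha_f$-part of $G$ sums to the $z$-independent polynomial $\frac{\alpha_f}{(\pi i)^n}Y^n$; adding up their differentials reproduces exactly $\pi i\,f(z)(X-zY)^n\,dz+\pi i\,\overline{\s(f)}(X-\bar zY)^n\,d\bar z$. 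Equivalently one can run the same computation with the operators $\partial,\overline\partial$, where the combinatorics behind (\ref{Rrsdef}) make the recursions of the first theorem telescope and where $\partial$ kills $\Lef^{-n}$, and $\Lef^{-n}$ times an antiholomorphic function, in weight $(n,0)$.

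Step (II) --- $\Gamma$-equivariance of $G$, equivalently the statement that each $G_{r,s}$ is modular of weight $(r,s)$ --- is the substantive point, and is not a consequence of (I): the four summands of $G_{r,s}$ are individually not modular. Since $G$ is assembled from the holomorphic Eichler integral $R(f)$, its antiholomorphic conjugate $\overline{R(\s(f))}$, and the functions $\Lef$ and $\Lef^{-n}$, its failure of equivariance is measured by a cocycle $\gamma\mapsto G|_\gamma-G$ with values in $\C[X,Y]_n$; since everything in sight is a series in $q,\bar q,\Lef$ this cocycle is trivial on $T=\begin{pmatrix}1&1\\0&1\end{pmatrix}$, so only its value on $S=\begin{pmatrix}0&-1\\1&0\end{pmatrix}$ matters. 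Its cohomology class is the sum of the period-polynomial class of $f$ and the complex conjugate of that of $\s(f)$ (the $\Lef^{-n}$- and $\Lef$-contributions being coboundaries), and this class vanishes precisely because $\s$ is induced by complex conjugation on Betti cohomology --- which holds by the definition of the single-valued involution; the scalar $\alpha_f$ is then the unique value forcing the cocycle itself, not merely its class, to vanish. Unwinding the Eichler--Shimura comparison in terms of the period matrix $\mathrm{P}_f$ both verifies this and identifies $\alpha_f$: up to an explicit constant it is the ``period of $f$ in degree zero'', a ratio of periods and quasi-periods of $f$ read off from $\mathrm{P}_f$. (Alternatively, once equivariance is known, $\alpha_f$ can be pinned down by matching a single Fourier coefficient with $\mathcal{H}(f)_{r,s}$.)

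The main obstacle is step (II): establishing the period-polynomial cocycle identity, i.e.\ that the cocycles attached to $R(f)$ and $\overline{R(\s(f))}$ cancel modulo the explicit coboundaries coming from $\Lef^{-n}$ and $\Lef$. This is where the hypotheses really enter, through the identification of $\s$ with the real Frobenius and the resulting description (\ref{introsf}) of its matrix; by contrast, step (I) is a self-contained combinatorial verification once the identity for $R(f)$ above and the elementary action of $\partial,\overline\partial$ on $\Lef^{-k}f^{(k+1)}$ have been recorded.
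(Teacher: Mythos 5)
Your Step (I) is essentially the paper's Proposition \ref{propRrsproperties} and is fine. The real divergence is in how the two arguments handle modularity, and here your route is genuinely different from --- and more expensive than --- the one in the paper. You propose to prove directly that the explicit ansatz $G$ is $\Gamma$-equivariant, by showing that the period-polynomial cocycle of $R(f)$ cancels against the conjugate cocycle of $\overline{R(\s(f))}$ modulo the coboundary of $Y^n$, and then to invoke uniqueness. That cancellation is exactly the content of the paper's existence theorem (\S\ref{sectExistence}), where the equivariance of $\mathcal{I}_f$ and $\mathcal{R}_f$ is deduced from corollary \ref{corcocyff'} and the definition of $\s$ via the real Frobenius; so your Step (II), which you correctly identify as the substantive point but leave as a sketch (``unwinding the Eichler--Shimura comparison \dots verifies this''), is not a step in the proof of this formula so much as a restatement of the separate existence result. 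The paper instead takes a modular primitive $X_{r,s}$ as \emph{given} and never proves (or needs) modularity of the ansatz: it sets $E_{r,s}=X_{r,s}-Y_{r,s}$ with $Y_{r,s}=\frac{a_0(f)}{n+1}\Lef+R_{r,s}(f)+\overline{R_{s,r}(g)}$, observes that the associated section $E$ satisfies $\partial E/\partial z=0$ and $\partial E/\partial\overline z=c\,(X-\overline z Y)^n$ and is merely $T$-equivariant (automatic from the $q$-expansions), and then applies the rigidity lemma \ref{lemEsection} to conclude $c=0$ and $E=\frac{\alpha}{(\pi i)^n}Y^n$. This is weaker input than $\Gamma$-equivariance, it produces the $\alpha_f(-1)^r\binom{n}{r}\Lef^{-n}$ term and its uniqueness in one stroke, and in the general weakly holomorphic case it yields the identity $a_0(g)=\overline{a_0(f)}$ as a by-product rather than as a hypothesis.

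Two smaller points. First, your claim that adjusting the single scalar $\alpha_f$ kills the entire cocycle (not just its class) does hold, but deserves the one-line justification that a coboundary $P|_{\gamma}-P$ vanishing on $T$ forces $P\in V_n^T=\C Y^n$ (for $n>0$, where $V_n^\Gamma=0$), so the ambiguity is exactly one-dimensional; as written this is asserted. Second, your parenthetical fallback --- pinning down $\alpha_f$ by ``matching a single Fourier coefficient'' --- is circular at this stage, since $\alpha_f$ is precisely the coefficient not yet computed; the paper determines it only later, in \S\ref{sectalpha}, using the inhomogeneous Hecke eigenvalue equation. If you intend your argument to stand alone, you must either import the paper's \S\ref{sectExistence} wholesale to justify Step (II), or replace Step (II) by the $T$-invariance-plus-rigidity argument above.
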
 
The constant term  $\alpha_f$ can be computed (\S\ref{sectalpha})   from the Fourier coefficients of $f$ and $\s(f)$ in the case when $f$ is  cuspidal, and is given by an odd zeta value in the case when $f$ is an Eisenstein series. 
This dichotomy is due to the fact that the Tate twists of the Tate motive have non-trivial  extensions, but the Tate twists of  the motive of a cusp form do not (in the relevant range). 
When $f$ is holomorphic, the constant $\alpha_f$ is proportional to the  Petersson norm  of $f$. 

When  $f$ is a Hecke cuspidal eigenform with coefficients in a number field $K_f$, then the coefficients in the expansion of $\mathcal{H}(f)_{r,s}$ lie in  a $K_f$-vector space of dimension at most 3 which is spanned by  periods.
 We show furthermore:
 \begin{enumerate}
 \item   If $f$ is a Hecke eigenfunction with eigenvalues $\lambda_m$, then the functions $\mathcal{H}(f)_{r,s}$ satisfy an inhomogeneous Hecke eigenvalue equation with  eigenvalues $m^{-1} \lambda_m$.  See \S\ref{sectHeckeaction} for  precise statements.
 
\item  The action of  $\mathrm{Gal}(\overline{\Q}/\Q)$ on Hecke eigenfunctions extends to an action on the functions $\mathcal{H}(f)_{r,s}$, for every $r,s$. In fact, this action extends to an action of a `motivic'
 Galois group  which  acts on the coefficients   in the expansion (\ref{fexp}) via a  conjectural Galois theory of periods. 
  \end{enumerate}
 The only main ingredient in this  paper is the single-valued involution $\s$, which is derived from the real Frobenius. It would be interesting to replace it with a $p$-adic crystalline Frobenius and  define  $p$-adic versions of real analytic cusp forms (see \cite{CandeloriCastella}).

 \subsection{Weak harmonic lifts and mock modular forms of integral weight}
 Consider the special case $r=n, s=0$.  For the sole  purposes of this introduction  set
 $$ \widetilde{f} =  \Lef^{-1} \mathcal{H}(f)_{n,0}\ .$$
 
 \begin{cor} For every (weakly holomorphic) cusp form  $f$ of weight $n+2$, the function $\widetilde{f}$ is a canonical  weak harmonic lift of $f$. More precisely: 
 $$  \partial  \widetilde{f}  =  f  \qquad \hbox{ and } \qquad     \Delta  \widetilde{f}  =0\ . $$
  In particular $\widetilde{f}$ is a weak Maass waveform. 
  It  is given explicitly by 
 $$\widetilde{f} =  \frac{\alpha_f}{\Lef^{n+1}} + \sum_{k=0}^n \binom{n}{k} (-1)^k \frac{k!}{\Lef^{k+1}} f^{(k+1)} + \frac{n!}{\Lef^{n+1}} \overline{\s(f)^{(n+1)}}$$
 \end{cor}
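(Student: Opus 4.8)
The plan is to obtain the Corollary as a direct specialisation of Theorems~1.1 and~1.2 to the single case $(r,s)=(n,0)$; no further analytic work is needed. Two elementary facts will be used throughout: $\SL_2(\Z)$ has no modular forms of odd weight, so $n$ is even and $(-1)^n=1$; and $f$ being (weakly) cuspidal forces $a_0(f)=0$.

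For the closed formula I would substitute $r=n$, $s=0$ into the expression of Theorem~1.2,
$$\mathcal{H}(f)_{r,s}= \frac{a_0(f)}{n+1}\,\Lef + \alpha_f(-1)^r\binom{n}{r}\Lef^{-n}+R_{r,s}(f)+\overline{R_{s,r}(\s(f))},$$
so that the first term drops out and the $\Lef^{-n}$ term becomes $\alpha_f\Lef^{-n}$. It then remains to unwind the definition \eqref{Rrsdef} in the two extreme cases: for $R_{n,0}(f)$ the outer factor $(-1)^n\binom{n}{n}$ is $1$ and the inner $\binom{r}{k-s}$ collapses to $\binom{n}{k}$, giving $R_{n,0}(f)=\sum_{k=0}^n\binom{n}{k}(-1)^k\frac{k!}{\Lef^k}f^{(k+1)}$; while in $R_{0,n}(\s(f))$ the sum over $k\in\{n,\dots,n\}$ has only the term $k=n$, giving $R_{0,n}(\s(f))=\frac{n!}{\Lef^n}\s(f)^{(n+1)}$. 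Since $\Lef$ is real it commutes with complex conjugation, so dividing $\mathcal{H}(f)_{n,0}$ by $\Lef$ produces exactly the asserted expression for $\widetilde f=\Lef^{-1}\mathcal{H}(f)_{n,0}$. This step is purely bookkeeping of binomials and signs.

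For the two differential identities: $\Delta\widetilde f=0$ is, verbatim, the final assertion of Theorem~1.1 in the case $(r,s)=(n,0)$. For $\partial\widetilde f=f$, Theorem~1.1 gives $\partial\mathcal{H}(f)_{n,0}=\Lef f$, and it suffices to know that $\partial$ commutes with multiplication by $\Lef^{-1}$ on $\mathcal{M}^{!}_{n,0}$; this is part of the same formalism of the operators $\partial,\overline{\partial}$ (cf.\ \cite{ZagFest}) that underlies the equivalence in Theorem~1.1, and is already visible in the Eisenstein identities $\partial\mathcal{E}_{w,0}=\Lef\,\GE_{w+2}$ and $\partial(\Lef^{-1}\mathcal{E}_{w,0})=\GE_{w+2}$ recalled in the introduction; hence $\partial\widetilde f=\Lef^{-1}\partial\mathcal{H}(f)_{n,0}=f$. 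Finally, the explicit formula exhibits $\widetilde f$ as a finite sum of monomials $\Lef^{-j}q^m$ and $\Lef^{-j}\bar q^m$ with $j$ bounded and $m$ bounded below, so $\widetilde f\in\mathcal{M}^{!}_{n+1,1}$ and has at most the growth at the cusp permitted for a weak Maass waveform; together with $\Delta\widetilde f=0$ this gives the last sentence, and canonicity is inherited from the uniqueness of $\mathcal{H}(f)_{n,0}$ in Theorem~1.1. Since Theorems~1.1 and~1.2 are taken as given, there is no genuine obstacle here: the only point demanding attention is verifying that the uniform formula of Theorem~1.2 really collapses to the stated three‑term shape at $(r,s)=(n,0)$ — the combinatorial identities for $R_{n,0}$ and $R_{0,n}$, the vanishing of $a_0(f)$, the value $(-1)^n=1$, and the interaction of the real factor $\Lef$ with the conjugation in $\overline{R_{s,r}(\s(f))}$.
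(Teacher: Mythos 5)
Your proposal is correct and is essentially the paper's own argument: the corollary is obtained by specialising Theorems~1.1 and~1.2 to $(r,s)=(n,0)$, using $a_0(f)=0$, the collapse of $R_{n,0}$ and $R_{0,n}$ to the two extreme sums (with $(-1)^n=1$ since $n$ is even), and dividing by $\Lef$, which commutes with $\partial$ by the identity $[\partial,\Lef]=0$ of Lemma~\ref{lemoperatoridentities}. The harmonicity statement is, as you say, already part of Theorem~1.1, so nothing further is needed.
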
 
 The problem of  constructing weak harmonic lifts has a long history, but an explicit construction has remained elusive.  The existence of weak harmonic lifts in  a much more general setting was proved  in \cite{BF}. Having established existence, the general shape of the Fourier expansion is  easily deduced - the only issue is to determine the unknown Fourier coefficients.  On the other hand a direct, but highly transcendental, construction using Poincar\'e series was given in \cite{BringOno,  Ono}, involving complicated special functions.
This procedure is potentially ill-defined: when the space of cusp forms has dimension greater than one, it involves choices, since there are relations between Poincar\'e series. 
  The question of whether  weak harmonic lifts  have   irrational coefficients or not has been raised   \cite{BOR, Ono}.   Our results imply that these functions, despite appearances, are in fact of geometric, and indeed, motivic, origin.

 The `mock' modular form associated to $\widetilde{f}$ is the complex conjugate of   $\Lef^{n+1}$ times the antiholomorphic part of $\widetilde{f}$. It is harmonic, and given by:
 $$M_f = \alpha_f + n! \, \s(f)^{(n+1)}$$
When $f$ is a Hecke eigenform, $\s(f)$ is given by (\ref{introsf}), which leads to a very simple and explicit construction of mock modular forms of integral weights for $\SL_2(\Z)$.  In the literature, it is customary to rescale the mock modular forms by the Petersson inner product. This gives
\begin{equation} \label{introMfprime} M '_f     =    \alpha'_f    +  (n-1)! \, \sum_{m\in \Z \backslash \{0\}}   \frac{a'_m + \rho\, a_m}{m^{n-1}} q^m \ , 
\end{equation}
where $a_m, a'_m$ are the Fourier coefficients of $f, f'$ respectively, and
$$\rho = -\frac{1}{2} \Big( \frac{\eta^+}{\omega^+} +  \frac{\eta^-}{\omega^-}\Big) \ .$$
The quantity  $\alpha'_f$ is in the field of definition of the $a_m, a'_m$. 

In \S\ref{sectExample},  we compute  this explicitly in the case of Ramanujan's  $\Delta$ function. Let
  \begin{eqnarray}
\Delta   &= &  q  -24 \, q^2 +252\, q^3-   1472\, q^4  + 4830 q^5 + \ldots  \nonumber \\
  \Delta'         &   = & q^{-1} +47709536\, q^2+39862705122\, q^3 + \ldots   \nonumber 
  \end{eqnarray} 
  where $\Delta' \in M^!_{12}$ is the unique  normalised  weakly holomorphic modular form  which has a pole of order  $1$ at the cusp, and whose Fourier coefficients $a_0$, $a_1$ vanish. 
 In this case $a_n, a'_n \in \Z$,  and $a_n$ is the Ramanujan $\tau$-function. The functions   $\Delta, \Delta'$ are a basis for the  de Rham  realisation of  the motive \cite{Scholl} of $\Delta$.   
      If $\rho$ is  irrational (as expected) then the $n$th Fourier coefficient 
  of $M'_f$ is irrational if and only if $a_n \neq 0$.

 Since the space of cusp forms of weight $12$ is one-dimensional, the method of Poincar\'e  series \cite{BringOno} also yields in this case an explicit expression for this mock modular form in terms of special functions. Comparing the Fourier coefficients of the  two gives:

\begin{cor}  \label{corKloost} For all $n> 0$, 
$$  2 \pi \,   n^{\frac{11}{2}}\, \sum_{c=1}^{\infty} \frac{K(-1,n,c)}{c}  I_{11} \Big( \frac{4 \pi \sqrt{n}}{c}\Big) =   a'_n  + \rho    a_n$$ \
where $K$ denotes a  Kloosterman sum  and $I$ a  Bessel function \cite{Ono}. 
\end{cor}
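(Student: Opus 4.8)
The plan is to compare two independently-derived formulas for the same Fourier coefficients of the mock modular form $M'_f$ attached to $f = \Delta$. On the one hand, equation (\ref{introMfprime}) of Theorem 1.6 (specialised to $n+2 = 12$, so $n = 10$, $K_f = \Q$) tells us that after the customary Petersson rescaling the $m$th Fourier coefficient of $M'_f$ equals $(n-1)!\,(a'_m + \rho\, a_m)/m^{n-1} = 9!\,(a'_m + \rho\, a_m)/m^9$ for $m \neq 0$, with $a_m = \tau(m)$ the Ramanujan $\tau$-function and $a'_m$ the Fourier coefficients of $\Delta'$. On the other hand, since $S_{12}$ is one-dimensional, the classical theory of Poincar\'e series (as developed by Bringmann--Ono and Ono in \cite{BringOno, Ono}) produces the unique weak harmonic Maass form lifting $\Delta$, and its holomorphic (mock) part has Fourier coefficients given, up to an explicit elementary normalisation, by the Kloosterman-Bessel series $2\pi\, n^{11/2} \sum_{c \geq 1} c^{-1} K(-1, n, c)\, I_{11}(4\pi\sqrt{n}/c)$. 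Matching these two expressions coefficient by coefficient, after tracking the normalising constants, yields the stated identity.

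Concretely, I would proceed as follows. First, I would pin down exactly which weak harmonic lift of $\Delta$ is produced by the Poincar\'e series construction in \cite{BringOno}, and record the precise normalisation of its Fourier expansion; the key point is \emph{uniqueness}, which here follows because $\dim S_{12} = 1$ means there is no ambiguity from relations among Poincar\'e series, so the Poincar\'e-series lift must coincide with our canonical $\widetilde{\Delta}$ (equivalently, with the canonical weak harmonic lift of Corollary 1.7) up to possibly a holomorphic weakly modular form of weight $12$ and up to the normalisation of $\alpha_f$. Second, I would extract the holomorphic part of each and strip off the constant term, so that the comparison is between the $m > 0$ Fourier coefficients only (where the canonical lift is rigid and the constant-term ambiguity $\alpha_f$ drops out). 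Third, I would substitute the explicit formula $M'_f = \alpha'_f + 9!\sum_{m \neq 0} (a'_m + \rho a_m) m^{-9} q^m$ from (\ref{introMfprime}) and equate with the Poincar\'e-series coefficients, then solve for $a'_n + \rho a_n$, which produces the displayed formula after the powers of $2\pi$, the factorials, and the $n^{11/2}$ versus $n^{-9}$ bookkeeping are reconciled.

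The main obstacle is \emph{not} any deep mathematics but rather the careful reconciliation of normalisations between two literatures: our convention normalises $\Delta$ so that $a_1 = 1$ and rescales the mock form by the Petersson inner product in the manner of (\ref{introMfprime}), whereas the Poincar\'e-series computation in \cite{BringOno, Ono} carries its own weight-$k$ and index conventions, its own choice of multiplier, and a particular way of writing the Bessel index $I_{k-1} = I_{11}$ and the Kloosterman sum $K(-1, n, c)$. One must check that the sign of the pole (the $q^{-1}$ term of $\Delta'$, i.e.\ index $-1$ in the Kloosterman sum), the weight $k = 12$ giving Bessel order $11$, and the overall constant all line up; a single misplaced factor of $2\pi$ or $(4\pi)^{k-1}$ would corrupt the identity. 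So the proof is essentially: invoke uniqueness of the weak harmonic lift of $\Delta$ (Corollary 1.7 together with the one-dimensionality of $S_{12}$), write down the Fourier coefficients of its holomorphic part in two ways, and equate. I would present it as a short corollary, with the normalisation check relegated to a careful but routine paragraph, citing \cite{BringOno} for the Poincar\'e-series side and (\ref{introMfprime}) for ours.
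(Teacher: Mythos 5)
Your approach is exactly the paper's: in \S\ref{sectExample} the explicit expansion $M'_\Delta = 11!\big(-\tfrac{7!\,13}{691} + \sum_n (a'_n+\rho a_n)\, n^{-11} q^n\big)$ is derived, uniqueness of the mock modular form attached to $\Delta$ (available because $\dim S_{12}=1$) identifies it with the Poincar\'e-series construction of \cite{Ono}, and the Fourier coefficients are equated. The one caution is that the exponent you quoted from (\ref{introMfprime}), namely $m^{n-1}=m^{9}$, should be $m^{n+1}=m^{11}$ (as in the \S\ref{sectExample} formula and the abstract's $n^{1-k}$); it is $n^{-11}$ against Ono's $n^{-11/2}$ that produces the stated $n^{11/2}$.
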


   Since modular forms of level one do not have complex multiplication, Grothendieck's period conjecture, applied to the motives of cusp forms, would imply that its Fourier coefficients are transcendental. The reader will easily be able to generalise the results of this paper to the case of a general congruence subgroup using the results of \cite{SchollKazalicki}\footnote{After we had written this paper,  K. Ono and N. Diamantis kindly pointed out the recent work of Candelori \cite{Candelori}, which is closely related to our construction and applies for modular forms of level $\geq 5$. His formula (6.6) for the Fourier coefficients in the case $n\neq 0$ is very similar to (\ref{introMfprime}).}.   The theory of motives predicts that a modular form has complex multiplication if and only if 
 $\rho$ is an algebraic multiple of the Petersson norm of $f$ by a power of $2 \pi i$.
 This could  explain the phenomena studied  in the recent paper  \cite{BOR} which observed algebraicity of the Fourier coefficients of Maass waveforms  associated to modular forms with complex multiplication.

   \subsection{Contents}
 In \S\ref{sectWeakHol} we  review  the theory of weakly holomorphic modular forms. Much of this material is standard, but many aspects are not widely known and may be of independent interest. In \S\ref{sectMI!} we review some properties of the space $\mathcal{M}^!$ of real-analytic modular forms from \cite{ZagFest}, and its subspaces $\HM^!$ (\S\ref{sectHM}) of Laplace eigenfunctions and $\MI^!$ (\S\ref{sectMI1}) of modular integrals.
 In  \S\ref{sectHecke} we describe the action of Hecke operators on $\HM^!$. Much of this material is well-known. 
 In \S\ref{sectExistence} we prove the existence of weak modular lifts, and  in \S\ref{sectExample} we discuss  Ramanujan's function $\Delta$. 
 \\
 
 \emph{Acknowledgements}.  The author is partially supported by ERC grant GALOP 724638. Many thanks to Larry Rolen for informing me of the problem of finding weak harmonic lifts  during the conference `modular forms are everywhere' in honour of Zagier's 65th birthday, and to Luca Candelori for comments and corrections.

\section{Background on weakly holomorphic modular forms} \label{sectWeakHol}

\subsection{Weakly holomorphic modular forms} The vector space $M^{!}_n$ of weakly holomorphic modular forms of weight $n\in\Z$ is the vector space 
of holomorphic functions $f: \HH \rightarrow \C$ with possible poles at the cusp, which are modular of weight $n$. They  admit a Fourier expansion of the form 
\begin{equation} \label{fweakmodexp} f = \sum_{n\geq - N}  a_n q^n  \qquad \hbox{ where } a_n \in \C \ . 
\end{equation} 
 The space $S^!_n\subset M^!_n$ of cusp forms are those with   $a_0=0$.   The subspace of functions with Fourier coefficients $a_n$ in a ring $R\subset \C$ will be denoted by $M_n^!(R)$. 

Consider the following operator, which does not in general preserve modularity:
\begin{equation}
D = q \frac{d}{dq}\ .
\end{equation}
An identity due to Bol  \cite{Bol} (see also lemma \ref{lemBol}  below) implies, however, that 
$$D^{n+1} : M_{-n}^! \To M_{n+2}^!\ .$$
Its image is contained in the space of cusp forms $S_{n+2}^!$.
Elements in the cokernel of this map can be viewed as modular forms `of the second kind', and can be interpreted as algebraic de Rham cohomology. 
Surprisingly this fact is not well known. It appeared  for the first time implicitly in the work of Coleman \cite{Coleman} on $p$-adic modular forms, and later in \cite{Scholl} and \cite{SchollKazalicki}. A direct proof in the case of level one was given in \cite{BH}.

\begin{thm} \label{M11algdR}  Let $\mathcal{M}_{1,1}$ denote the moduli stack of elliptic curves over $\Q$, and $\V$ the algebraic vector bundle defined by the de Rham  cohomology $H^1_{dR}(\mathcal{E}/\mathcal{M}_{1,1})$ of the universal elliptic curve $\mathcal{E}$ over $\mathcal{M}_{1,1}$, equipped with the Gauss-Manin connection.  Set $\V_n = \mathrm{Sym}^n \,\V$.   For all $n\in \Z$, there is a canonical isomorphism of $\Q$ vector spaces
\begin{eqnarray} \label{MmodDtoHdR}
M_{n+2}^! (\Q)/ D^{n+1} M_{-n}^!  (\Q)  \,  \overset{\sim}{\To}\,  H^1_{dR}( \mathcal{M}_{1,1}; \V_{n})\ .
\end{eqnarray} 
The right-hand side vanishes if $n\leq 0$ or $n$ is odd. 
\end{thm}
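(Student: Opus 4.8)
The plan is to compare the algebraic de Rham cohomology $H^1_{dR}(\mathcal{M}_{1,1};\V_n)$ with an explicit complex built from weakly holomorphic modular forms. Since $\mathcal{M}_{1,1}$ is (as a stack) an affine curve once we remove the cusp — more precisely $\mathcal{M}_{1,1} = \mathbb{A}^1_j$ as a coarse space, and the relevant cohomology is computed by a two-term de Rham complex $\V_n \xrightarrow{\nabla} \V_n \otimes \Omega^1_{\mathcal{M}_{1,1}}(\log \infty)$ — I would first identify global sections. The key classical input is the Eichler–Shimura-type dictionary: sections of $\mathrm{Sym}^n$ of the Hodge bundle, together with the Gauss–Manin connection, translate holomorphic (resp.\ meromorphic-at-the-cusp) differential forms into modular forms of weight $n+2$ (resp.\ weakly holomorphic ones), the weight shift of $2$ coming from the $\Omega^1$ factor combined with the Kodaira–Spencer isomorphism $\Omega^1_{\mathcal{M}_{1,1}} \cong \omega^{\otimes 2}$. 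Under this dictionary, $H^0(\V_n \otimes \Omega^1(\log\infty))$ becomes $M^!_{n+2}$.

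The second step is to compute $H^0(\mathcal{M}_{1,1};\V_n)$ and the connecting map $\nabla$. For $n>0$ even, the local system $\V_n$ has no global flat sections (the monodromy of $\mathrm{SL}_2(\mathbb Z)$ on $\mathrm{Sym}^n$ of the standard representation has no invariants when $n>0$), so $H^0_{dR}=\ker\nabla$ on global sections vanishes; hence $H^1_{dR}(\mathcal{M}_{1,1};\V_n)$ is exactly the cokernel of $\nabla$ acting on the appropriate space of sections. Bol's identity (Lemma~\ref{lemBol}, invoked as available) is precisely the statement that this connecting map, written in terms of Fourier expansions, is $D^{n+1} = (q\tfrac{d}{dq})^{n+1}$ up to normalisation, taking $M^!_{-n}$ to $M^!_{n+2}$. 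Assembling these identifications yields the isomorphism (\ref{MmodDtoHdR}). One must be slightly careful about whether to use $\Omega^1(\log\infty)$ or $\Omega^1$ (i.e.\ whether poles at the cusp are allowed on the target side as well): allowing log poles on both sides is what makes both the source $M^!_{-n}$ and the quotient correspond to weakly holomorphic objects rather than holomorphic ones, and is compatible with the fact that $\mathcal{M}_{1,1}$ minus the cusp is affine so that higher cohomology of a coherent sheaf vanishes.

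The final assertions — vanishing for $n\leq 0$ or $n$ odd — are then easy. For $n$ odd, the element $-I \in \mathrm{SL}_2(\mathbb Z)$ acts by $(-1)^n = -1$ on $\V_n$, so $\V_n$ as a sheaf on the \emph{stack} $\mathcal{M}_{1,1}$ has no nonzero sections of any of the relevant sheaves (equivalently, all odd-weight modular forms vanish, $M^!_k = 0$ for $k$ odd), forcing both $M^!_{n+2}$ and $D^{n+1}M^!_{-n}$ to be zero. For $n<0$ even: $D^{n+1}$ raises $q$-order, $M^!_{-n}$ has negative weight so is just $0$ when $-n<0$; meanwhile for $n=-2$ one has $M^!_0 = \Q$ (constants) but $\V_{-2}$ doesn't make sense as $\mathrm{Sym}$ of a negative power — so the substantive remaining case is $n=0$, where $\V_0 = \Ok$, $H^1_{dR}(\mathcal{M}_{1,1};\Ok) = H^1_{dR}(\mathbb{A}^1) = 0$, matching $M^!_2 = D^1 M^!_0$ (every weight-two weakly holomorphic form is $Df$ for some $f\in M^!_0$, since there are no holomorphic weight-two forms and the obstruction is a residue which vanishes on $\mathbb{A}^1$). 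The main obstacle is getting the Kodaira–Spencer/log-pole bookkeeping exactly right so that the weight shift is $+2$ and the pole orders on source and target match the statement; the cohomological vanishing and the role of Bol's identity are then formal. Since much of this is carried out in \cite{BH} and \cite{SchollKazalicki} in the level-one case, I would cite those for the detailed verification and present here only the structural outline above.
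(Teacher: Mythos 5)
The paper itself gives no proof of this theorem: it records it as a known result and defers entirely to \cite{Coleman}, \cite{Scholl}, \cite{SchollKazalicki} and (for the level-one case) \cite{BH}. Your outline is the correct skeleton of the argument in \cite{BH} --- affineness of the coarse space, the Hodge filtration of $\V_n$ with Kodaira--Spencer giving the weight shift by $2$, and Bol's identity identifying the surviving graded piece of $\nabla$ with $D^{n+1}:M^!_{-n}\to M^!_{n+2}$ --- and since you too ultimately cite those references for the details, this is essentially the same approach as the paper's. One local slip worth correcting: in your discussion of $n<0$ you assert that ``$M^!_{-n}$ has negative weight so is just $0$,'' but weakly holomorphic forms of negative weight are far from zero (e.g.\ $\Delta^{-1}\in M^!_{-12}$; indeed the nontriviality of $M^!_{-n}$ for $n>0$ is the whole point of the quotient by $D^{n+1}M^!_{-n}$); for $n<0$ the right-hand side vanishes simply by the convention $\V_n=0$, and the only substantive case is $n=0$, which you handle correctly via $H^1_{dR}(\mathbb{A}^1)=0$.
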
 
This theorem has a number of consequences that we shall spell out below. Many of these have been known for some time, others apparently not.

There is a canonical decomposition into Eisenstein series and cusp forms
$$ H^1_{dR}( \mathcal{M}_{1,1}; \V_{n})  \ = \  H^1_{\mathrm{cusp}, dR }( \mathcal{M}_{1,1}; \V_{n}) \oplus  H^1_{ \mathrm{eis}, dR}( \mathcal{M}_{1,1}; \V_{n})\ .$$
Via the isomorphism (\ref{MmodDtoHdR}), the latter is generated by Eisenstein series (\ref{GEdef})
$$H^1_{\mathrm{eis},dR}( \mathcal{M}_{1,1}; \V_{n}) =  \Q  \,\GE_{n+2}$$
for all $n\geq 2$, 
and the former is isomorphic to the space of cusp forms
$$H^1_{\mathrm{cusp}, dR}( \mathcal{M}_{1,1}; \V_{n})  =    S_{n+2}^! (\Q)/ D^{n+1} M_{-n}^!  (\Q)\ .$$
Serre duality induces a  pairing on the latter space. Explicitly, if $f , g\in S_{n+2}^!$ are weakly holomorphic cusp forms of weight $n+2$ with Fourier coefficients $a_k(f), a_k(g)$ respectively, it is given   by  \cite{Guerzhoy}, \cite{BH} \S5
\begin{equation}\label{IPdef} \{f, g\} =  \sum_{k\in \Z}  \frac{ a_k(f) a_{-k}(g)}{k^{n+1}}\ .
\end{equation}
It vanishes if  $f$ or $g$ is in the image of the Bol operator $D^{n+1}$. We have
$$\dim_{\Q}  H^1_{\mathrm{cusp}, dR}( \mathcal{M}_{1,1}; \V_{n}) = 2 \, \dim_{\C} S_{n+2}\ .$$
One shows \cite{Guerzhoy}  that every equivalence class
$$[f] \in      M_{n+2}^! / D^{n+1} M_{-n}^!  $$
 has a unique representative $f\in    M_{n+2}^!$ such that
 $$\mathrm{ord}_{\infty} f   \geq - \dim S_{n+2}\ .$$
 Thus we have  a canonical isomorphism
 $$M^!_{n+2} =  D^{n+1} M^!_{-n} \oplus  H^1_{dR}( \mathcal{M}_{1,1}; \V_{n})\ . $$

\subsubsection{Hecke operators}   The isomorphism   (\ref{MmodDtoHdR}) is equivariant with respect to the action of Hecke operators $T_m$, for $m\geq 1$,  which act via the  formula (\ref{usualTmf}) (which we shall re-derive, in a more general context, in \S\ref{sectHecke}).   
 If a formal power series (\ref{fweakmodexp}) has a pole of order $p$ at the cusp, then $T_m f$ has a pole of order $mp$ at the cusp.

The Hecke operators commute with the Bol operator:
$$[ T_m, D^{n} ]= 0 \quad \hbox{ for all } n \  ,$$
which implies that there is an action  of the Hecke algebra for all $n$
$$T_m :  M_{n+2}^! / D^{n+1} M_{-n}^!  \quad \To  \quad M_{n+2}^! / D^{n+1} M_{-n}^!\ . $$
The action of Hecke operators respects the decomposition into Eisenstein series and cusp forms.
In particular, the Eisenstein series $\GE_{2k}$  are normalised  Hecke eigenfunctions: for all $n\geq 2$ and $m\geq 1$, 
\begin{equation} \label{TmonGE} T_m \GE_{n+2} =  \sigma_{n+1}(m) \GE_{n+2}
\end{equation}
The pairing (\ref{IPdef}) is orthogonal with respect to the action of $T_m$ \cite{Guerzhoy}
$$\{T_m f, g\} = \{f, T_m g\} \qquad \hbox{ for all }  f, g \in S_{n+2}^! \ .$$
The space of cusp forms decomposes over $\overline{\Q}$  into Hecke eigenspaces 
$$  H^1_{\mathrm{cusp}, dR}( \mathcal{M}_{1,1}; \V_{n})  \otimes_{\Q} \overline{\Q}  =  \bigoplus_{\llam} H^{dR}_{\llam} \otimes_{K_{\llam}} \overline{\Q}$$
 where $\llam=(\lambda_m)_{m \geq1}$ and $H^{dR}_{\llam}$ is a   2-dimensional $K_{\llam}$ vector space, where $K_{\llam} \subset \R$ is the 
 number field generated by the $\lambda_m$. It is generated by a normalised Hecke eigenform
 $$f_{\llam}   \quad \in \quad M_{n+2}  ( K_{\llam})$$
 which satisfies $T_m f_{\llam}= \lambda_m f_{\llam}$ for all $m$, and a \emph{weak Hecke eigenform} 
 $$f'_{\llam}   \quad \in \quad M^!_{n+2}  ( K_{\llam})$$
 which satisfies for all $m\geq 1$:
 \begin{equation} \label{Tmfprime} T_m f'_{\llam}= \lambda_m \,f'_{\llam}     \pmod{D^{n+1} M^!_{-n} (K_{\llam})}\ .
 \end{equation}
We can assume as a consequence of \cite{BH}, proposition 5.6,  that $f_{\llam}, f'_{\llam}$  satisfy:
$$ \{f'_{\llam} , f_{\llam} \} = 1\ , $$
and furthermore,  that $f'_{\llam}$ has poles at the cusp of order at most $\dim S_{n+2}$.  With these conventions, $H^{dR}_{\llam}$ has a basis
\begin{equation} \label{Hbasis} H^{dR}_{\llam}  =   f_{\llam} K_{\llam}  \oplus f'_{\llam} K_{\llam} 
\end{equation} 
which is well-defined up to transformations  $f_{\llam}' \mapsto f_{\llam}' + a  f_{\llam}$, for $a \in K_{\llam}$. 
\begin{rem}  \label{remcanbasis} One could  fix a `canonical' basis of $H^{dR}_{\llam}$ either by assuming  that the Fourier coefficient $a_1$ of $f'$ is equal to $1$, or by demanding that $ \{f'_{\llam} , f'_{\llam} \} =   0$ (note that  $\{f_{\llam} , f_{\llam} \} =0$  holds automatically). This will not  be required in this paper. The latter condition holds for the basis chosen in 
 \S\ref{sectExample}. 
\end{rem}

\subsubsection{Group cohomology and cocycles}
Let $\Gamma = \SL_2(\Z)$. Let $\VV_n $ denote the local system $ \mathrm{Sym}^n R^1 \pi_* \Q$ on $\mathcal{M}_{1,1}(\C)$ where $\pi: \mathcal{E} \rightarrow \mathcal{M}_{1,1}$ is the universal elliptic curve and $\Q$ is the constant sheaf on $\mathcal{E}(\C)$. Its fiber at the tangent vector $\partial /\partial q$ on the $q$-disk is the vector space
$$V_n = \bigoplus_{i+j=n} \Q X^i Y^j  $$
of homogeneous polyomials in  variables $X, Y$,  corresponding to the standard homology basis of the fiber of the universal elliptic curve.   It admits a right-action by $\Gamma$
$$(X,Y) \big|_{\gamma} = (a X+ b  Y , c X + dY)$$
for $\gamma $ of the form (\ref{introfgammaz}). Recall that the space of cocycles 
$Z^1 (\Gamma ; V_n)$
is the  $\Q$-vector space generated by functions 
$\gamma \mapsto C_{\gamma} : \Gamma \rightarrow V_n$
satisfying the cocycle equation 
$$C_{gh} = C_g\big|_h   + C_h  \quad \hbox{ for all }  \quad g, h \in \Gamma \ .$$ 
Such a cocycle is uniquely determined by $C_S, C_T$, where
\begin{equation}
S = \begin{pmatrix}  0  &  -1 \\ 1 & 0 \end{pmatrix} \qquad  , \qquad 
T = \begin{pmatrix}  1  &  1 \\ 0 & 1 \end{pmatrix}\ .
\end{equation} 
The  polynomials $C_S, C_T$ satisfy a system of equations called the cocycle equations.
  A cocycle is called cuspidal if $C_T=0$. The subspace of coboundaries 
$B^1(\Gamma; V_n)$ is the $\Q$-vector space generated by cocycles of the form 
$$C_{\gamma} = P\big|_{\gamma} - P$$
for some $P \in V_n$. The cohomology group  is defined by 
$$H^1(\Gamma; V_n)   = Z^1(\Gamma; V_n ) / B^1(\Gamma; V_n)\ .$$
There is a natural action of Hecke operators on $H^1(\Gamma; V_n)$. In fact, this action naturally lifts to an action on the space of
cocycles $Z^1(\Gamma; V_n)$ which preserves $B^1(\Gamma;V_n)$ \cite{Ma0}.

Complex conjugation on $\mathcal{M}_{1,1}(\C)$ induces an involution called the real Frobenius $F_{\infty}$ upon $H^1(\Gamma; V_n)$ (and in fact $Z^1(\Gamma; V_n)$). 
It acts on $\Gamma$ by conjugation by 
$$\epsilon = \begin{pmatrix} 1 & 0 \\ 0 & -1 \end{pmatrix}$$
and on $V_n$ by right-action by $\epsilon$, i.e., $(X,Y) \mapsto (X,-Y)$ (see \cite{MMV} \S5.4). 
In particular, there is a canonical decomposition 
\begin{equation} \label{H1pmdecomp} H^1(\Gamma; V_n) =   H^1(\Gamma; V_n)^+ \oplus H^1(\Gamma; V_n)^-
\end{equation} 
into $F_{\infty}$-eigenspaces. The first is spanned by classes of cocycles $C$ such that $C_S$ is $\epsilon$-invariant (even), the second by cocycles which are anti-invariant (odd). 

Finally, there is an inner product on $H^1_{\mathrm{cusp}}(\Gamma; V_n)$ induced by a pairing between cocycles and compactly-supported cocycles  \cite{MMV}, \S8.3 which gives:
$$\{  \ , \  \} : Z^1(\Gamma; V_n) \times Z^1_{\mathrm{cusp}} (\Gamma; V_n) \To \Q \ , $$
a formula for which was given by Haberlund,  e.g. \cite{MMV} (2.11).

\subsubsection{Eichler-Shimura isomorphism}
 The following corollary is a  consequence of  a mild extension of Grothendieck's algebraic de Rham theorem.

\begin{cor} There is a canonical isomorphism 
\begin{equation} \label{compisom} \mathrm{comp}_{B,dR} :  H^1_{dR}( \mathcal{M}_{1,1}; \V_{n}) \otimes_{\Q} \C  \overset{\sim}{\To}  H^1(\Gamma; V_n) \otimes_{\Q} \C \ . \end{equation}
It respects the action of Hecke operators on both sides. 
\end{cor}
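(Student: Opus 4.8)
The plan is to realise (\ref{compisom}) as an instance of Grothendieck's algebraic de Rham comparison isomorphism, in the form that incorporates coefficients in a flat algebraic connection and that holds over the smooth Deligne--Mumford stack $\mathcal{M}_{1,1}$, applied to the Gauss--Manin bundle $(\V_n,\nabla_{\mathrm{GM}})$. First I would pass to a scheme: choosing $N\geq 3$, the moduli problem of elliptic curves with full level-$N$ structure is represented by a smooth affine curve $Y(N)$, finite \'etale over $\mathcal{M}_{1,1}$ with Galois group $G=\SL_2(\Z/N\Z)$, and since $|G|$ is invertible in $\Q$, both $H^1_{dR}(\mathcal{M}_{1,1};\V_n)$ and $H^1(\mathcal{M}_{1,1}(\C);\VV_n)$ are recovered as the $G$-invariants of the corresponding groups for $Y(N)$ (Galois descent returning us from the field of definition of $Y(N)$ to $\Q$). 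So it is enough to establish a canonical comparison isomorphism for the pair $(Y(N),\V_n)$ and take invariants.

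Over $\C$, the algebraic de Rham cohomology of the two-term complex $\V_n\xrightarrow{\nabla_{\mathrm{GM}}}\V_n\otimes\Omega^1_{Y(N)}$ coincides with its analytic counterpart; this requires the non-properness of $Y(N)$ to be controlled, which it is because $\nabla_{\mathrm{GM}}$ is regular singular at the cusps (Deligne). The holomorphic Poincar\'e lemma for a flat connection then identifies the analytic de Rham complex with a resolution of the local system $\VV_n=\ker\nabla_{\mathrm{GM}}^{\mathrm{an}}$, so that its hypercohomology is $H^1(Y(N)(\C);\VV_n)$; combined with flat base change $H^1_{dR}(Y(N);\V_n)\otimes\C\overset{\sim}{\To}H^1_{dR}(Y(N)_{\C};\V_n)$, this yields the desired canonical isomorphism for $Y(N)$. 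Taking $G$-invariants, using the orbifold identification $H^1(\mathcal{M}_{1,1}(\C);\VV_n)=H^1(\Gamma;V_n)$ (legitimate since $\mathcal{M}_{1,1}(\C)=[\HH/\Gamma]$ with $\HH$ contractible and we work with $\Q$-coefficients, $V_n$ being the fiber of $\VV_n$ at $\partial/\partial q$ with the right $\Gamma$-action recalled above), and invoking Theorem \ref{M11algdR} to rewrite the de Rham side as $M^!_{n+2}(\Q)/D^{n+1}M^!_{-n}(\Q)$, produces the map $\mathrm{comp}_{B,dR}$. It is canonical because each step above is.

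For Hecke equivariance, recall that $T_m$ is induced by the algebraic Hecke correspondence $\mathcal{M}_{1,1}\xleftarrow{\pi_1}\mathcal{C}_m\xrightarrow{\pi_2}\mathcal{M}_{1,1}$ parametrising cyclic $m$-isogenies, together with the morphism $\pi_2^*\V_n\to\pi_1^*\V_n$ coming from the action of the universal isogeny on first de Rham cohomology, via $T_m=(\pi_1)_*\circ(\,\cdot\,)\circ\pi_2^*$. Pullback along the finite flat maps $\pi_i$, the Gysin trace $(\pi_1)_*$, and the isogeny-induced morphism are all compatible with $\mathrm{comp}_{B,dR}$ --- being the de Rham and Betti realisations of one and the same correspondence --- so $\mathrm{comp}_{B,dR}$ intertwines the de Rham $T_m$, which by the standard computation acts on $q$-expansions via (\ref{usualTmf}) and hence on (\ref{MmodDtoHdR}), with the Hecke action on $H^1(\Gamma;V_n)$ recalled in the text.

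The main obstacle is the second step: the comparison of algebraic and analytic de Rham cohomology with flat-connection coefficients on the \emph{open} curve $Y(N)$, where the non-properness is exactly what makes the regular singularity of the Gauss--Manin connection indispensable. By contrast, the passage through a level cover to dispose of the stack, and the bookkeeping required to align the two Hecke conventions, are routine.
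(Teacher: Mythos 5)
Your proposal is correct and follows essentially the same route as the paper, which simply cites ``a mild extension of Grothendieck's algebraic de Rham theorem'' and leaves the details (level cover to rigidify the stack, Deligne's regular-singular comparison on the open curve, Hecke correspondences for equivariance) implicit; you have spelled out exactly that extension. No gaps.
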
 
In particular, the comparison  isomorphism respects the decomposition into Eisenstein and cuspidal parts. 
It can be computed  as follows.  Fix a point $\tau_0 \in \HH$. 

\begin{defn} For every  $f\in M_{n+2}^!$, where $n\geq 0$,   let us write   
\begin{equation} \label{Ffdefn} F_f(\tau  ) =  \int^{\tau_0}_{\tau}   (2 \pi i)^{n+1}   f(\tau)(X- \tau Y)^{n} d\tau \ .
\end{equation}
The integral converges since $\tau_0$ is finite. It defines a $1$-cocycle
\begin{equation} \label{Cfdef} C^f_{\gamma}(X,Y)= 
F_f(\gamma \tau) \big|_{\gamma} - F_f(\tau)  \qquad  \in  \quad Z^1(\Gamma; V_{n} \otimes \C)\ .
\end{equation}
which is independent of $\tau$. 
\end{defn} 
Changing $\tau_0$ modifies this cocycle by a coboundary. We
deduce   a linear map
\begin{eqnarray} M_{n+2}^!  &\To &  H^1(\Gamma; V_{n}\otimes \C)  \nonumber   \\
f & \mapsto & [C^f] \nonumber \end{eqnarray} 
which is well-defined, i.e., independent of the choice of point $\tau_0$, and Hecke-equivariant.  One easily shows  (see \cite{ESforMock} or  version 1 of \cite{ZagFest}) that  $f \in D^{n+1} M_{-n}^!$ if and only if $[C^f] \in B^1(\Gamma; V_n)\otimes \C$, and hence the previous map  descends to an isomorphism  
$$ M_{n+2}^!/ D^{n+1} M_{-n}^!  \overset{\sim}{\To}  H^1(\Gamma; V_{n}) \otimes \C$$
which corresponds via  theorem \ref{M11algdR}  to   the comparison isomorphism $\mathrm{comp}_{B,dR}$.

\subsubsection{Period matrix}
Since the comparison isomorphism is Hecke-equivariant, it respects the decomposition into Hecke eigenspaces.

Let $H_{\llam}^B$ denote the  Hecke eigenspace of $H^1(\Gamma; V_n)$ corresponding to the eigenvalues $\llam$. It is a $K_{\llam}$-vector space of dimension $2$ and admits a decomposition 
$$H_{\llam}^B  = H_{\llam}^{B,+}  \oplus H_{\llam}^{B,-}$$
into $\pm$ eigenspaces with respect to the real Frobenius $F_{\infty}$.

The comparison isomorphism induces a canonical  isomorphism
$$ \mathrm{comp}_{B,dR} \quad :  \quad H^{dR}_{\llam} \otimes_{\Q} \C  \overset{\sim}{\To} H^B_{\llam} \otimes_{\Q} \C\ .$$

\begin{defn} Let us choose  generators $P_{\lambda}^{\pm}$ of  $H^{B, \pm}_{\llam}$ respectively, and a basis (\ref{Hbasis}) for  $H^{dR}_{\llam}$. A \emph{period matrix} is the comparison isomorphism $\mathrm{comp}_{B,dR}$ written with respect to these  bases:
\begin{equation} \label{periodmatrix} 
\mathrm{P}_{\llam}= \begin{pmatrix} 
\eta_{\llam}^+  & \omega_{\llam}^+ \\
i\eta_{\llam}^- &  i \omega_{\llam}^- \\
\end{pmatrix}
\end{equation} 
It is well-defined up to  multiplication  on the left by a  diagonal matrix with entries in $K_{\llam}^{\times}$, which   reflects the ambiguity in the choices of $P_{\llam}^{\pm}$ up to scalar, and multiplication on the right by a lower triangular matrix with $1$'s on the diagonal. \end{defn}

From the compatibility of the period isomorphism with complex conjugation and real Frobenius (\S\ref{sectRealFrobandsv}), the numbers 
$ \omega_{\llam}^{\pm} , \eta_{\llam}^{\pm}$
are real.  The  $\omega_{{\llam}}$ are the usual periods of $f_{\llam}$, the numbers  $\eta_f$ could be called its `quasi-periods' and seem not to have been considered in the literature. 
It was proved in \cite{BH} theorem 1.7 that 
$$\det(\mathrm{P}_{\llam})  \quad  \in \quad (2\pi i )^{n+1} K^{\times}_{\llam}\ .$$

\subsubsection{Hodge theory}  The de Rham cohomology group $H^1_{dR}(\mathcal{M}_{1,1}; \V_n)$ admits an increasing weight filtration $W$ and a decreasing Hodge filtration $F$ by $\Q$-vector spaces.  The basis $(\ref{Hbasis})$ is compatible with the Hodge filtration. 

Similarly, $H^1(\Gamma; V_n)$ is equipped with an increasing filtration $W$ compatible with the weight filtration on de Rham cohomology via the comparison isomorphism. 

Thus $H^1(\Gamma; V_n)$ defines a mixed Hodge structure, and is in fact the Betti realisation of a motive \cite{Scholl}. The latter admits a decomposition  (as motives)
$$H^1 ( \mathcal{M}_{1,1} ; \V_n) =    H^1_{\mathrm{cusp}} ( \mathcal{M}_{1,1} ; \V_n)\oplus  H^1_{\mathrm{eis}} ( \mathcal{M}_{1,1} ; \V_n)$$
where 
$ H^1_{\mathrm{eis}, dR} ( \mathcal{M}_{1,1} ; \V_n)  \cong \Q(-n-1)$
and $H^1_{\mathrm{cusp}} ( \mathcal{M}_{1,1} ; \V_n)$ decomposes, over $\overline{\Q}$, as a direct sum of motives $V_{\llam}$ of rank $2$ of type $(n+1, 0)$ and $(0, n+1)$.

\subsubsection{Real Frobenius and single-valued map}   \label{sectRealFrobandsv}
All the constructions in this paper are simply a consequence of  complex conjugation.  The comparison isomorphism fits in the  following  commuting diagram
$$
\begin{array}{ccc}
\mathrm{comp}_{B,dR} \ : \  H^1_{dR}(\mathcal{M}_{1,1}; \V_n) \otimes_{\Q} \C  & \overset{\sim}{\To}    &  H^1(\Gamma; V_n)  \otimes_{\Q} \C \\
 \downarrow &   & \downarrow   \\
 \mathrm{comp}_{B,dR} \ : \   H^1_{dR}(\mathcal{M}_{1,1}; \V_n) \otimes_{\Q} \C &  \overset{\sim}{\To}   &    H^1(\Gamma; V_n)  \otimes_{\Q} \C
\end{array}
$$
where the vertical map on the left is the $\C$-antilinear isomorphism  $c_{dR}$ which is the   identity on $H^1_{dR}(\mathcal{M}_{1,1}; \V_n)$ and   complex conjugation on $\C$; and the vertical map on the right is $F_{\infty} \otimes c_B$ where $c_B$ is complex conjugation on the coefficients. 

It follows that the real Frobenius $F_{\infty}$ induces an isomorphism which we have had occasion to call the `single-valued map' \cite{NotesMot}, \S4.1: 
$$ \s :  H^1_{dR}(\mathcal{M}_{1,1}; \V_n) \otimes_{\Q} \C \overset{\sim}{\To}    H^1_{dR}(\mathcal{M}_{1,1}; \V_n) \otimes_{\Q} \C \ . $$
It is none other than the composition 
$$ \s=  \comp_{B, dR}^{-1}  \circ( F_{\infty}  \otimes \id) \circ  \comp_{B, dR}\ .$$ 
It induces an isomorphism on every Hecke eigenspace
$$ \s:  H^{dR}_{\llam}\otimes \C \overset{\sim}{\To} H^{dR}_{\llam}\otimes \C\ .$$
Written in the basis (\ref{Hbasis}), it is given explicitly by the matrix
$$  \overline{P_{\llam}}^{-1}  P_{\llam}   =  {i \over \det \mathrm{P}_f}     \begin{pmatrix} 
 \eta_{\llam}^{+}\omega_{\llam}^{-}  +\omega^{+}_{\llam} \eta^{-}_{\llam}       & 2  \omega_{\llam}^{+} \omega_{\llam}^{-} \\
  - 2  \eta_{\llam}^{+} \eta_{\llam}^{-}  &     - \eta_{\llam}^{+}\omega_{\llam}^{-}  - \omega_{\llam}^{+} \eta_{\llam}^{-}     \\
\end{pmatrix}
$$
On the Hecke eigenspace corresponding to Eisenstein series, which is a pure Tate motive $\Q(-n-1)$, $\s$ is  multiplication by  $-1$ and  $\s(\GE_{2n+2}) = - \GE_{2n+2}$.  For cusp forms, 
$$\s(f)   = 
  \Big(\frac{  \eta^{+}_f \omega^{-}_f+  \eta^{-}_f \omega^{+}_f   }{\eta^-_f  \omega^+_f - \eta^+_f \omega^-_f }\Big)    \,    f   +  \Big(\frac{2\, \omega^+_f \omega^-_f }{{\eta^+_f\omega^-_f  - \eta^-_f\omega^+_f  } } \Big)\,   f'  \ .   
$$
From this formula for $\s(f)$  and the equation $\{f, f'\}=1$ we find that 
\begin{equation} \label{sff} \{ \s(f) , f \} =  \frac{ 2 i \omega_{\llam}^+ \omega_{\llam}^- }{ \det(\mathrm{P}_f)} 
\end{equation}
which by proposition 5.6 of \cite{BH} is proportional  (depending on one's choice of normalisation) to  the  Petersson norm of $f$.
One could define the Petersson norm of $f'$ to be  $\{ \s(f') , f' \}$. The  off-diagonal  entries of the single-valued period matrix are proportional to the permanent
$$\mathrm{perm} \, (\mathrm{P}_{\llam})   =  i (\eta_{\llam}^{+}\omega_{\llam}^{-}  +\omega^{+}_{\llam} \eta^{-}_{\llam}  ) \ .$$
\begin{rem}
The  constructions above clearly work for the motives \cite{Scholl} of any cuspidal eigenforms of integral weight for congruence subgroups of $\SL_2(\Z)$. In the case when the motive admits complex multiplication,  the ratios $\omega^+_{\llam}/\omega^-_{\llam}$ and $\eta^+_{\llam} /\eta^-_{\llam}$ will be algebraic.  It follows that in this case, the ratio
$$  \frac{\mathrm{perm} \, (\mathrm{P}_{\llam})}{\det\, (\mathrm{P}_{\llam})} = \frac{\eta^+_{\llam} /\eta^-_{\llam}  +    \omega^+_{\llam}/\omega^-_{\llam}}{  \eta^+_{\llam} /\eta^-_{\llam} -  \omega^+_{\llam}/\omega^-_{\llam}}  $$
will also be algebraic. 
\end{rem}
\section{The space $\M^!$ of  non-holomorphic modular forms} \label{sectMI!}

We recall some definitions from \cite{ZagFest}. 
Let 
\begin{equation} \label{Ldef}
\Lef = \log |q| = i \pi (z -\overline{z}) = - 2\pi y
\end{equation} 
which is modular of weights $(-1,-1)$. 
Recall that $\M^!$  is the complex vector space of real analytic modular functions (\ref{introfgammaz}) admitting an expansion of the form (\ref{fexp}).
 Let  $\M\subset \M^!$  denote the subspace of functions  for which  $N$ is zero, i.e., such that  $a^{(k)}_{m,n}$ vanishes if $m$ or $n$ is negative. If $\M^!_{r,s}$ denotes the subspace of functions of modular weight $(r,s)$, then 
$$\M^! = \bigoplus_{r,s} \M^!_{r,s}$$
is a bigraded algebra.   The \emph{constant part} of $f$ is defined to be 
$$f^0 = \sum_{|k| \leq M}\LL^k  a^{(k)}_{0,0} \quad \in \quad \C[\Lef^{\pm}]\ .$$
We say that $f$ is a cusp form if $f^0=0$. The subspace of cusp forms is denoted $\mathcal{S}^!\subset \mathcal{M}^!$ and its component of weights $(r,s)$ is denoted $\mathcal{S}^!_{r,s}$.

\subsection{Differential operators} \label{sectDiffop} There exist bigraded derivations 
$$\partial,  \overline{\partial} : \mathcal{M}^! \To \mathcal{M}^! $$
of bidegrees $(1,-1)$ and $(-1,1)$, whose restrictions to a component $\mathcal{M}^!_{r,s}$  are
$$\partial_r = (z- \overline{z}) {\partial \over \partial z } + r \quad \hbox{and}  \quad \partial_s = (\overline{z}- z) {\partial \over \partial  \overline{z}} + s\ $$
respectively.  The following lemma is straightforward consequence:
\begin{eqnarray} \label{partialraction} \partial_r \Lef^k q^m \overline{q}^n  &=  & (2 m \Lef + r+ k) \Lef^k q^m \overline{q}^n   \\ 
\overline{\partial}_s \Lef^k q^m \overline{q}^n  &=  & (2 n \Lef + s+ k) \Lef^k q^m \overline{q}^n \ .\nonumber 
\end{eqnarray} 
It is valid for any integers $k,m,n,r,s$.
\begin{lem} \label{lemker}  For all $r,s$,  the kernels of $\partial, \overline{\partial}$ are given by 
\begin{eqnarray} (\mathcal{M}_{r,s}^! \cap \ker \partial_r )  & \cong  & \Lef^{-r} \overline{M}^!_{s-r} \nonumber \\
 (\mathcal{M}_{r,s}^! \cap \ker  \overline{\partial}_s )  & \cong & \Lef^{-s} M^!_{r-s}\ .\nonumber
\end{eqnarray} 
In particular, $(\ker \partial) \cap (\ker \overline{\partial}) = \C[ \Lef^{\pm}]$. 
\end{lem}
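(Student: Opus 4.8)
The plan is to reduce everything to the monomial identity (\ref{partialraction}). Write a general $f\in\M^!_{r,s}$ as $f=\sum_{k,m,n}a^{(k)}_{m,n}\Lef^{k}q^{m}\overline q^{n}$, where by (\ref{fexp}) the coefficient $a^{(k)}_{m,n}$ vanishes unless $|k|\le M$ and $m,n\ge -N$ for suitable $M,N$. Applying $\partial_{r}$ term by term via (\ref{partialraction}) and collecting the coefficient of each monomial $\Lef^{k}q^{m}\overline q^{n}$, the equation $\partial_{r}f=0$ is equivalent to the recursion $2m\,a^{(k-1)}_{m,n}+(r+k)\,a^{(k)}_{m,n}=0$, holding for all integers $k,m,n$.

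The first step is to show $a^{(k)}_{m,n}=0$ whenever $m\ne 0$. Fixing such an $(m,n)$, the relation with $k=-r$ forces $a^{(-r-1)}_{m,n}=0$; since the coefficient $2m$ is invertible, the relations with $k\le -r-1$ then give $a^{(k)}_{m,n}=0$ for all $k\le -r-1$ by downward induction, while the relations with $k\ge -r+1$ express $a^{(k)}_{m,n}$ as a nonzero scalar multiple of $a^{(k-1)}_{m,n}$, so $a^{(-r)}_{m,n}\ne 0$ would produce infinitely many nonzero coefficients, contradicting $|k|\le M$; hence $a^{(-r)}_{m,n}=0$ and then $a^{(k)}_{m,n}=0$ for all $k$. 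For $m=0$ the relation reads $(r+k)a^{(k)}_{0,n}=0$, so $a^{(k)}_{0,n}=0$ unless $k=-r$. Consequently $f=\Lef^{-r}g$, with $g=\sum_{n}a^{(-r)}_{0,n}\overline q^{n}$ a function of $\overline q$ alone.

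Next I identify $g$. Since $\Lef$ is modular of weights $(-1,-1)$, the factor $\Lef^{-r}$ has weights $(r,r)$, so $g$ must be modular of weights $(0,s-r)$; being antiholomorphic with at worst a pole at the cusp, $g=\overline{h}$ for a unique $h\in M^!_{s-r}$. Conversely, for any $h\in M^!_{s-r}$ the function $\Lef^{-r}\overline{h}$ lies in $\M^!_{r,s}$, admits an expansion of the form (\ref{fexp}) with a single power of $\Lef$, and is killed by $\partial_{r}$ because each monomial occurring has $q$-exponent $0$ and $r+(-r)=0$. This establishes the isomorphism $\M^!_{r,s}\cap\ker\partial_{r}\cong\Lef^{-r}\overline{M}^!_{s-r}$, realised by multiplication by $\Lef^{-r}$. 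The statement for $\overline{\partial}_{s}$ is the mirror image, obtained by exchanging $z\leftrightarrow\overline z$ (equivalently $\partial\leftrightarrow\overline{\partial}$, $r\leftrightarrow s$, holomorphic $\leftrightarrow$ antiholomorphic), or simply by complex conjugation.

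For the final assertion, an $f$ in $\ker\partial\cap\ker\overline{\partial}$ decomposes over its weight components (both operators are bigraded), and in weights $(r,s)$ it is simultaneously of the form $\Lef^{-r}\overline{h}$ (monomials $\Lef^{-r}\overline q^{n}$ only) and $\Lef^{-s}h'$ (monomials $\Lef^{-s}q^{m}$ only); a monomial common to both descriptions must have $q$- and $\overline q$-exponent $0$ and satisfy $r=s$, so $f\in\C\,\Lef^{-r}$. Summing over weights gives $\ker\partial\cap\ker\overline{\partial}\subseteq\C[\Lef^{\pm}]$, and the reverse inclusion is immediate from (\ref{partialraction}), since $\partial_{-j}\Lef^{j}=\overline{\partial}_{-j}\Lef^{j}=0$. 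The only step needing genuine care is the recursion analysis: one must exploit finiteness of the Laurent expansion in $\Lef$ in \emph{both} directions, together with the degeneration of the recursion at the index $k=-r$ that decouples it into two chains; the rest is bookkeeping about modular weights.
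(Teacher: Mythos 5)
Your proof is correct, and it proceeds exactly as the paper intends: the lemma is stated immediately after the monomial identity (\ref{partialraction}) precisely so that one extracts the coefficient recursion $2m\,a^{(k-1)}_{m,n}+(r+k)\,a^{(k)}_{m,n}=0$ and exploits the finiteness of the Laurent range in $\Lef$ together with the degeneration at $k=-r$, which is what you do. The weight bookkeeping identifying the kernel with $\Lef^{-r}\overline{M}^{!}_{s-r}$ and the intersection argument for $\C[\Lef^{\pm}]$ are likewise the standard route; no gaps.
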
 
Since there exist weakly holomorphic modular forms of negative weight, it follows that primitives in $\mathcal{M}^!_{r,s}$, unlike the space $\mathcal{M}_{r,s}$,  are never unique.

The  bigraded Laplace operator  is the linear map
$$\Delta : \mathcal{M}^! \To \mathcal{M}^! $$
of bidegree $(0,0)$, which acts on  $\mathcal{M}^!_{r,s}$  by 
\begin{equation} \label{Deltarsdef} \Delta_{r,s} =  - \overline{\partial}_{s-1} \partial_r + r(s-1) = - \partial_{r-1} \overline{\partial}_s + s(r-1) \ . 
\end{equation}
Define linear operators
$$\sfh , \sfw : \mathcal{M}^! \To \mathcal{M}^!$$
 by  $\sfh( f) = (r-s) f$ and $\sfw(f) = (r+s) f$ for all $f\in \mathcal{M}_{r,s}^!$. 
\begin{lem} \label{lemoperatoridentities} These operators satisfy  the equations
$$[ \partial, \overline{\partial} ]  =   \sfh \quad , \quad  [\sfh, \partial] = 2 \partial \quad  , \quad [\sfh, \overline{\partial}] = - 2 \overline{\partial}$$ 
i.e., $\partial, \overline{\partial}$ generate a copy of $\ssl_2$. Furthermore, 
$$  [ \partial, \Lef ] = [\overline{\partial}, \Lef] = [ \partial,  \Delta  ] = [   \overline{\partial}, \Delta] = 0 \ , \hbox{ and}$$
$$[\Lef, \Delta ] = \sfw \, \Lef \quad , \quad [\Lef, \sfw] = 2 \Lef  \quad , \quad [\Lef, \sfh]=[\Delta, \sfw]=0\ .$$ 
\end{lem}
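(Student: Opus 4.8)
The plan is to verify each identity directly on the spanning set of monomials $\Lef^k q^m \overline{q}^n$, $k,m,n\in\Z$, each of which lies in a single bidegree component $\M^!_{r,s}$; since all the operators involved are $\C$-linear it suffices to check the relations on one such monomial. The only computational input is formula (\ref{partialraction}) for the action of $\partial_r$ and $\overline{\partial}_s$, combined with the elementary bookkeeping that multiplication by $\Lef$ has bidegree $(-1,-1)$, that $\partial$ and $\overline{\partial}$ have bidegrees $(1,-1)$ and $(-1,1)$, that $\Delta$ has bidegree $(0,0)$, and that $\sfh$ and $\sfw$ act on $\M^!_{r,s}$ by the scalars $r-s$ and $r+s$ respectively.

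For the $\ssl_2$ relations: $[\sfh,\partial]=2\partial$ and $[\sfh,\overline{\partial}]=-2\overline{\partial}$ are immediate, since applying $\partial$ (resp.\ $\overline{\partial}$) changes $r-s$ by $+2$ (resp.\ $-2$). For $[\partial,\overline{\partial}]=\sfh$ I would apply (\ref{partialraction}) twice, in each order, to $\Lef^k q^m\overline{q}^n\in\M^!_{r,s}$ — namely the composites $\partial_{r-1}\overline{\partial}_s$ and $\overline{\partial}_{s-1}\partial_r$, taking care to use the correctly shifted subscript after the first operator — and observe that in the difference the term quadratic in $\Lef$ and the mixed linear terms cancel, leaving exactly $(r-s)\Lef^k q^m\overline{q}^n=\sfh(\Lef^k q^m\overline{q}^n)$. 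This establishes that $\partial,\overline{\partial},\sfh$ span a copy of $\ssl_2$.

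For the remaining relations: $[\partial,\Lef]=[\overline{\partial},\Lef]=0$ follows at once from (\ref{partialraction}), since replacing $(k,r)$ by $(k+1,r-1)$ leaves the coefficient $2m\Lef+r+k$ unchanged (and symmetrically for $\overline{\partial}$); likewise $[\Lef,\sfh]=0$ because $r-s$ is unaffected by multiplication by $\Lef$, while $[\Lef,\sfw]=2\Lef$ because $r+s$ drops by $2$. To treat $\Delta$ efficiently I would first record, starting from (\ref{Deltarsdef}) and writing $r\pm s$ as $\tfrac12(\sfw\pm\sfh)$, the closed form
$$\Delta \;=\; -\,\overline{\partial}\,\partial \;-\;\tfrac14\bigl(\sfh^2+2\sfh\bigr)\;+\;\tfrac14\bigl(\sfw^2-2\sfw\bigr)\ .$$
Here $-\overline{\partial}\partial-\tfrac14(\sfh^2+2\sfh)$ is, up to sign, the Casimir of the $\ssl_2$ spanned by $\partial,\overline{\partial},\sfh$, and $\sfw$ commutes with each of $\partial,\overline{\partial},\sfh$ (because $\partial$ and $\overline{\partial}$ preserve the total weight $r+s$); hence $\Delta$ commutes with $\partial$, $\overline{\partial}$ and $\sfw$, which gives $[\partial,\Delta]=[\overline{\partial},\Delta]=[\Delta,\sfw]=0$. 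Finally $\Lef$ commutes with $\partial$, $\overline{\partial}$ and $\sfh$, hence with the Casimir part, so $[\Lef,\Delta]=\tfrac14[\Lef,\sfw^2-2\sfw]$; using $[\Lef,\sfw]=2\Lef$ this equals $\Lef\sfw-2\Lef=\sfw\Lef$, as claimed.

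I do not anticipate any genuine obstacle: the entire content is the bookkeeping of bidegree shifts together with a pair of substitutions into (\ref{partialraction}). The one point demanding care is to apply the operator with the \emph{correct} subscript after a bidegree shift — e.g.\ $\overline{\partial}_{s-1}$, not $\overline{\partial}_s$, after $\partial_r$ — and to track the change in the exponent $k$ when (\ref{partialraction}) multiplies a monomial by a polynomial in $\Lef$; once this is done, the cancellations in $[\partial,\overline{\partial}]$ and in $[\Lef,\Delta]$ are mechanical. Alternatively, every identity can be checked by a single brute-force evaluation on $\Lef^k q^m\overline{q}^n$ without introducing the closed form for $\Delta$ at all.
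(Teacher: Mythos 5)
Your verification is correct: the commutator $[\partial,\overline{\partial}]$ computed via $\partial_{r-1}\overline{\partial}_s-\overline{\partial}_{s-1}\partial_r$ on monomials does reduce to $(r-s)$, the closed form $\Delta=-\overline{\partial}\partial-\tfrac14(\sfh^2+2\sfh)+\tfrac14(\sfw^2-2\sfw)$ follows from $r(s-1)=\tfrac14(\sfw^2-2\sfw)-\tfrac14(\sfh^2+2\sfh)$, the first group of terms is $-\tfrac12$ times the Casimir of the $\ssl_2$ spanned by $\partial,\overline{\partial},\sfh$, and the final computation $\tfrac14[\Lef,\sfw^2-2\sfw]=\Lef\sfw-2\Lef=\sfw\Lef$ checks out. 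The paper states this lemma without proof (it is recalled from Part I of the series), and the intended argument is exactly this kind of direct verification from (\ref{partialraction}) and the bidegree bookkeeping; your Casimir observation is a clean way to package the $\Delta$-identities rather than brute-forcing them on monomials.
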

The equations $[\partial, \Lef] = [\overline{\partial}, \Lef]$ imply that $\Lef$ is constant for the  differential operators $\partial, \overline{\partial}$, and justify calling $f^0$ the `constant' part. 

\subsection{Bol's operator} \label{sectBol}
Recall the operator
$$D = q \frac{d}{dq}  = \frac{1}{2\pi i}\frac{\partial}{\partial z}\ .$$
\begin{lem}   \label{lemBol}  For all $n\geq 0$,  the following identity of operators  holds:
\begin{equation} \label{IDforBol}   \LL^{n+1} \Big(  {1 \over  \pi i} {\partial \over \partial z}\Big)^{n+1} =  \partial_0 \partial_{-1} \ldots \partial_{-n} \ .
\end{equation}
\end{lem}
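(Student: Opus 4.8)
The plan is to prove the operator identity \eqref{IDforBol} by induction on $n$, exploiting the explicit action of $\partial_r$ on the monomials $\Lef^k q^m \overline{q}^n$ recorded in \eqref{partialraction}. Since both sides of \eqref{IDforBol} are $\C$-linear differential operators and every $f \in \mathcal{M}^!$ (indeed every $f \in M^!_{-n}$, where the identity is ultimately applied) has a Fourier expansion in terms of such monomials, it suffices to check the identity after applying it to a single monomial $\Lef^0 q^m = q^m$; by linearity and continuity of both sides on formal $q$-expansions this is enough. Note that $\frac{1}{\pi i}\frac{\partial}{\partial z} = 2D$ where $D = q\frac{d}{dq}$, so $D q^m = m\, q^m$ and more generally $\bigl(\frac{1}{\pi i}\frac{\partial}{\partial z}\bigr)^{n+1} q^m = (2m)^{n+1} q^m$.

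First I would establish the base case $n=0$: the left side applied to $q^m$ gives $\Lef \cdot 2m\, q^m$, while $\partial_0$ applied to $q^m = \Lef^0 q^m$ gives, by \eqref{partialraction} with $r=0$, $k=0$, the value $(2m\Lef + 0 + 0)\,q^m = 2m\Lef\, q^m$; these agree. For the inductive step I would write $\partial_0 \partial_{-1}\cdots\partial_{-n} = \partial_0\,\bigl(\partial_{-1}\cdots\partial_{-n}\bigr)$ and apply the inductive hypothesis to the shifted operator $\partial_{-1}\cdots\partial_{-n}$ — more precisely, one checks that conjugating the length-$n$ identity by the weight shift gives $\partial_{-1}\cdots\partial_{-n}(q^m) = \Lef^n (2m)^n q^m$ directly, by iterating \eqref{partialraction}: at the $j$-th stage one has a term $\Lef^{j-1}(2m)^{j-1}q^m$ and applying $\partial_{-(n-j+1)+ (\text{appropriate shift})}$ — here the key arithmetic point is that applying $\partial_r$ to $\Lef^k q^m$ with $r+k = $ (something that telescopes) produces $(2m\Lef + r + k)\Lef^k q^m$, and one wants $r+k = 0$ at each stage so that only the $2m\Lef$ term survives. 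Tracking the indices: after applying $\partial_{-n}$ one gets $(2m\Lef - n + 0)q^m$, which is \emph{not} simply $2m\Lef q^m$ unless one is careful — so in fact the cleaner route is to apply the operators in the order $\partial_{-n}$ first, then $\partial_{-n+1}$, and observe that $\partial_{-n}$ raises the $\Lef$-power from $k$ to $k+1$ while the weight index increases by $1$ each step, so the quantity $(\text{weight index}) + (\Lef\text{-power})$ stays equal to $(-n) + 0 = -n + (\text{step number})$; one then verifies inductively that after $j$ steps the result is $\bigl(\prod_{i=0}^{j-1}(2m\Lef - n + i + i)\bigr)$ — and this is where the detailed bookkeeping must be done correctly.

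The main obstacle is precisely this index bookkeeping: one must verify that the non-$\Lef$ contributions $r+k$ at each stage telescope to zero, so that the product $\partial_0\partial_{-1}\cdots\partial_{-n}$ acting on $q^m$ collapses to the single term $\Lef^{n+1}(2m)^{n+1}q^m$ rather than a sum of lower-order terms in $\Lef$. Concretely, applying $\partial_{-n+i}$ to $\Lef^i q^m$ gives $(2m\Lef + (-n+i) + i)\Lef^i q^m = 2m\Lef^{i+1}q^m + (2i-n)\Lef^i q^m$, so there \emph{is} a lower-order term at each stage — the claim is that these lower-order terms must all be handled, and the identity as stated is an identity of operators only when restricted to $\ker D \oplus$ (the relevant space), or one uses that the full composition, when expanded, telescopes. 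The honest resolution, which I would carry out, is to prove by induction the stronger statement that $\partial_{-n+j}\cdots\partial_{-n}(q^m)$ equals $\Lef^{j+1}$ times a polynomial in $m$ of the expected leading form plus explicitly computable lower-order terms, and then check that applying the final operators kills everything except $\Lef^{n+1}(2m)^{n+1}q^m$ — alternatively, and more elegantly, one recognizes $\partial_0\partial_{-1}\cdots\partial_{-n}$ as the standard Bol/Maass-shift composition and invokes the classical Bol identity in the form $\bigl(\frac{d}{dz}\bigr)^{n+1}$ intertwines with the iterated raising operators via the factor $(z-\overline z)^{n+1}$, which is exactly $\Lef^{n+1}$ up to the constant $(\pi i)^{n+1}$ absorbed on the left-hand side. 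I would present the direct monomial computation as the primary argument since it is self-contained and matches the explicit style of \eqref{partialraction}.
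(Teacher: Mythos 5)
Your strategy---verify \eqref{IDforBol} by applying both sides to the monomials $q^m$ and using \eqref{partialraction}---is viable in principle: both sides lie in the Weyl algebra $\C[\LL,d_z]$ with $d_z=(\pi i)^{-1}\partial/\partial z$ and $[d_z,\LL]=1$, and such an operator is determined by its values on all $q^m$ (the functions $\LL^j q^m$ are linearly independent, and a polynomial in $2m$ vanishing for every $m$ is zero). But as written the proof has a genuine gap: the central computation is never performed. You correctly notice that $\partial_{-n}q^m=(2m\LL-n)q^m$ carries a lower-order term, hence that your intermediate claim $\partial_{-1}\cdots\partial_{-n}(q^m)=\LL^n(2m)^nq^m$ is false, and that the entire content of the lemma is the cancellation of these remainders across the full composition $\partial_0\partial_{-1}\cdots\partial_{-n}$. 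You then only \emph{promise} to track the ``explicitly computable lower-order terms'' and check that they die, without exhibiting them; and the fallback you float---that the identity might hold only on a restricted subspace---is incorrect: it is an unrestricted operator identity, as the $n=1$ check $\partial_0\partial_{-1}q^m=(2m)^2\LL^2q^m$ already illustrates. So the crux is identified but not resolved, and no usable induction hypothesis is ever formulated.

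The bookkeeping is painful only because of the choice of test functions. Setting $\theta=\LL d_z$, one has $\partial_r=\theta+r$, so the claim is the classical Weyl-algebra identity $\theta(\theta-1)\cdots(\theta-n)=x^{n+1}(\partial/\partial x)^{n+1}$ transported along $x\mapsto\LL$, $\partial/\partial x\mapsto d_z$; this is the paper's proof, and it is checked instantly on the $\theta$-eigenfunctions $x^k$, on which both sides act by the scalar $k(k-1)\cdots(k-n)$. Your $q^m$ are eigenfunctions of $d_z$ but not of $\theta$, which is precisely why every stage produces a remainder that only cancels globally. If you want a self-contained induction in your style, the correct inductive step is the operator identity $\LL^{i+1}d_z^{i+1}=(\theta-i)\,\LL^i d_z^i=\partial_{-i}\,\LL^i d_z^i$, a one-line consequence of $[d_z,\LL]=1$; composing this for $i=0,\dots,n$ and using that the factors $\partial_{-i}=\theta-i$ commute with one another yields \eqref{IDforBol} with no lower-order terms to track.
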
 
\begin{proof}
Consider the Weyl ring $\Q[x, {\partial\over \partial x}]$ and write $\theta = x {\partial\over \partial x}$. Then the following identity is easily verified  for all $n\geq 1$:
\begin{equation}\label{thetaWeylid}   \theta (\theta-1) \ldots (\theta-n) = x^{n+1} \big(\textstyle{\partial \over\partial x}\big)^{n+1} \ .
\end{equation} 
For example, it can be tested on $x^m$ for $m\geq 0$.  Set $d_z = (\pi i)^{-1} \partial /\partial z$ and observe that 
$\partial_r = \LL d_z + r$.
Since $d_z \LL=1$, there is an isomorphism   $   \C[x, {\partial /\partial x} ] \overset{\sim}{\rightarrow}  \C[ \LL, d_z]$ sending
$x$ to $\LL$ and ${\partial / \partial x} $ to $d_z$. The image of $\theta+r$ is $\partial_r$, so $(\ref{IDforBol})$ is  equivalent to $(\ref{thetaWeylid})$. 
\end{proof} 
Since $\partial$ commutes with $\Lef$, we can write 
\begin{equation}  \label{EquationforBol}
D^{n+1} \Big|_{\mathcal{M}^!_{-n,\bullet}} = \Big( \frac{\partial}{2 \Lef}\Big)^{n+1}\Big|_{\mathcal{M}^!_{-n,\bullet }}\ . 
\end{equation} 
This defines for all $s \in \Z$ a linear map
$$D^{n+1} :  \mathcal{M}^!_{-n,s} \To \mathcal{M}^!_{n+2,s}\ .$$
Its complex conjugate defines a map $\overline{D}^{n+1}:    \mathcal{M}^!_{r,-n} \rightarrow \mathcal{M}^!_{r,n+2}$ for all $r$. 

\subsection{Vector-valued modular forms} \label{vectvaluedandFrs}
Call a real analytic function $F:\HH \rightarrow V_n \otimes \C$ \emph{equivariant} if  for every $\gamma \in \SL_2(\Z)$ and all $ z\in \HH$ it satisfies:
$$F(\gamma z) \big|_{\gamma} = F(z)\ .$$
There is a correspondence \cite{ZagFest} \S7.2, between sections of  $ V_n \otimes \C$ and families of functions $F_{r,s}: \HH \rightarrow \C$ for $r+s=n$ with $r,s\geq 0$.  It is given by writing
\begin{equation}
F(z) = \sum_{r+s= n}  F_{r,s} (X- zY)^{r} (X- \overline{z} Y)^s\ .
\end{equation}
 Then $F$ is equivariant if and only if each $F_{r,s}$ is modular of weights $(r,s)$. Furthermore, $F$ admits an expansion in $\C[q^{-1}, \overline{q}^{-1}, q, \overline{q} ]] [z, \overline{z}]$ if and only if each $F_{r,s} \in \mathcal{M}_{r,s}^!$.

A special case of \cite{ZagFest}, proposition 7.2  implies that 
\begin{equation}\label{dFgeneral}  d F  =   \pi i \, f(z) (X-z Y)^n dz   +  \pi i  \, \overline{g(z)} (X- \overline{z} Y)^n d\overline{z}
\end{equation}
 if and only if the following system of equations holds:
\begin{eqnarray}  \label{partialFgeneral}
\partial F_{r,s} & = &   (r+1) F_{r+1, s-1} \qquad \hbox{ for all } s \geq 1    \\
\overline{\partial} F_{r,s} & = &   (s+1) F_{r-1, s+1} \qquad \hbox{ for all } r \geq 1 \nonumber   \\
\partial F_{n,0} = \Lef f & \quad \hbox{ ,  } \quad  &  \overline{\partial} F_{0,n} = \Lef \overline{g}\ .\nonumber 
\end{eqnarray}  
In the present paper, we only consider the case  $f,g \in M_{n+2}^!$.
\subsection{Some useful lemmas}
 \begin{lem} \label{lempartialiterate}  Let $f \in \mathcal{M}^!_{r,s}$, and write $h=r-s$. Suppose that $\partial f=0$. Then 
 $$ \partial \overline{\partial}^k f =  k ( h - k+1 )\,  \overline{\partial}^{k-1} f$$
for all  integers $k\geq 0$. 
 \end{lem}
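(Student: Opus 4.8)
The plan is to prove the identity by induction on $k$, using the $\ssl_2$-commutation relations of Lemma \ref{lemoperatoridentities} together with the bidegree bookkeeping for $\overline{\partial}$. For $k=0$ the right-hand side vanishes (the factor $k$ kills it) and the left-hand side is $\partial f = 0$ by hypothesis. For $k=1$ one computes directly, using $[\partial,\overline{\partial}] = \sfh$ and $\partial f = 0$,
$$\partial \overline{\partial} f = \overline{\partial}\,\partial f + \sfh f = \sfh f = (r-s) f = h f,$$
since $f\in \mathcal{M}^!_{r,s}$; and this agrees with $1\cdot(h-1+1)\,\overline{\partial}^0 f = h f$.

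For the inductive step, assume the formula for some $k\geq 1$, write $\overline{\partial}^{k+1} f = \overline{\partial}(\overline{\partial}^k f)$, apply $\partial$, and use $[\partial,\overline{\partial}] = \sfh$:
$$\partial \overline{\partial}^{k+1} f = \overline{\partial}\bigl(\partial \overline{\partial}^k f\bigr) + \sfh\,\overline{\partial}^k f.$$
The first term is handled by the inductive hypothesis: $\overline{\partial}\bigl(\partial \overline{\partial}^k f\bigr) = k(h-k+1)\,\overline{\partial}^k f$. For the second term, since $\overline{\partial}$ has bidegree $(-1,1)$ we have $\overline{\partial}^k f \in \mathcal{M}^!_{r-k,\,s+k}$, hence $\sfh\,\overline{\partial}^k f = \bigl((r-k)-(s+k)\bigr)\overline{\partial}^k f = (h-2k)\,\overline{\partial}^k f$. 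Adding the two contributions and using the elementary identity $k(h-k+1) + (h-2k) = (k+1)(h-k)$ gives $\partial \overline{\partial}^{k+1} f = (k+1)\bigl(h-(k+1)+1\bigr)\overline{\partial}^k f$, which is the claimed formula for $k+1$.

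There is essentially no obstacle: the only points requiring care are correctly tracking the modular bidegree of $\overline{\partial}^k f$ so that $\sfh$ acts by the right scalar, and the purely formal rearrangement of the coefficient. One could alternatively dispense with induction by observing that $\partial,\overline{\partial},\sfh$ span a copy of $\ssl_2$ acting on the lowest-weight vector $f$ (since $\partial f = 0$), so that the $\overline{\partial}^k f$ form a weight string and the asserted formula is the standard $\ssl_2$ ladder relation; but the direct induction above is the shortest route.
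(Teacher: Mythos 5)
Your proof is correct and is essentially the paper's argument: both run an induction on $k$ powered by $[\partial,\overline{\partial}]=\sfh$ together with the bidegree bookkeeping $\overline{\partial}^k f\in\mathcal{M}^!_{r-k,s+k}$, the only organisational difference being that the paper first isolates the operator identity $\partial\overline{\partial}^k-\overline{\partial}^k\partial=\sum_{i+j=k-1}\overline{\partial}^i\,\sfh\,\overline{\partial}^j$ (its equation (\ref{partialandbarequation})), which it then reuses in the proofs of Propositions \ref{propweaklift} and \ref{propHeckeonXrs}, whereas you fold the same computation directly into the induction on the statement. No gap; your coefficient check $k(h-k+1)+(h-2k)=(k+1)(h-k)$ is exactly the arithmetic the paper's summation $\sum_{j=0}^{k-1}(h-2j)=k(h-k+1)$ performs all at once.
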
 
 
 \begin{proof}  It follows from  $[\partial, \overline{\partial}]=\mathsf{h}$ and induction that 
 \begin{equation} \label{partialandbarequation}
 \partial \overline{\partial}^k - \overline{\partial}^k \partial = \sum_{i+j=k-1, i,j\geq 0 } \overline{\partial}^i \,  \mathsf{h} \, \overline{\partial}^j
 \end{equation} 
 Applying this to $f$ gives the stated formula. 
 \end{proof} 
 
 \begin{cor} \label{corvanishing}
 Let $f \in \mathcal{M}^!_{r,s}$ with $r\geq s$. Let $h =r-s\geq 0$.  Then if 
 $$\partial f = 0 \qquad \hbox{ and } \qquad \overline{\partial}^{h+1} f =0 $$
 then  $f\in \C \Lef^{-r}$ if $r=s$ and $f$ vanishes if $h>0$. 
 \end{cor}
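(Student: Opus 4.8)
The plan is to use the two hypotheses in sequence: $\partial f=0$ pins down the shape of $f$ exactly, and then the condition $\overline{\partial}^{h+1}f=0$ becomes precisely the statement that Bol's operator $D^{h+1}$ kills an associated weakly holomorphic form. The only real `content' is this reduction; after it the conclusion is immediate, and the case $r=s$ versus $h>0$ is just reading off what "$g$ is a constant" means in each weight.

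First I would invoke Lemma~\ref{lemker}: since $\partial f=0$ and $f\in\mathcal{M}^!_{r,s}$, we have $f=\Lef^{-r}\,\overline{g}$ for a unique $g\in M^!_{s-r}=M^!_{-h}$, and I write $g=\sum_{m\geq -N}a_m q^m$. Regard $\overline{g}$ as an element of $\mathcal{M}^!_{0,-h}$. Because $[\overline{\partial},\Lef]=0$ by Lemma~\ref{lemoperatoridentities}, the operator $\overline{\partial}$ commutes with multiplication by $\Lef^{-r}$, so $\overline{\partial}^{h+1}f=\Lef^{-r}\,\overline{\partial}^{h+1}\overline{g}$. Now apply the complex conjugate of the operator identity $(\ref{EquationforBol})$ (equivalently, of Lemma~\ref{lemBol}) with $n=h$: on the slice $\mathcal{M}^!_{\bullet,-h}$ it reads $\overline{\partial}^{\,h+1}=(2\Lef)^{h+1}\,\overline{D}^{\,h+1}$, where $D=q\frac{d}{dq}$ (using $\overline{\Lef}=\Lef$ and again $[\overline{\partial},\Lef]=0$). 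Applied to $\overline{g}$ this yields
$$\overline{\partial}^{h+1}f\;=\;2^{h+1}\,\Lef^{\,h+1-r}\;\overline{D^{h+1}g}\ .$$

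Hence $\overline{\partial}^{h+1}f=0$ is equivalent to $D^{h+1}g=0$, i.e.\ $\sum_m m^{h+1}a_m q^m=0$, i.e.\ $a_m=0$ for every $m\neq0$; thus $g=a_0$ is a constant. If $h=0$ this gives $f=\overline{a_0}\,\Lef^{-r}\in\C\,\Lef^{-r}$, which is the first assertion. If $h>0$, then $g=a_0$ is a constant function that is modular of weight $-h\neq0$; since a non-zero constant is modular of weight $0$, we must have $a_0=0$, so $g=0$ and $f=0$, which is the second assertion.

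I do not expect a genuine obstacle here. The one point that needs a little care is the weight bookkeeping: the Bol identity $(\ref{EquationforBol})$ is stated on the slice $\mathcal{M}^!_{-n,\bullet}$, whereas I use its conjugate on the antiholomorphic slice $\mathcal{M}^!_{\bullet,-h}$ carrying $\overline{g}$, and I also push the spurious factor $\Lef^{-r}$ through $\overline{\partial}^{h+1}$ — both steps use only $[\overline{\partial},\Lef]=0$, but are worth making explicit. If one prefers to avoid citing $(\ref{EquationforBol})$, the same conclusion follows by iterating the formula $\overline{\partial}_{s+j}(\Lef^k\overline{q}^m)=2m\,\Lef^{k+1}\overline{q}^m+(s+j+k)\Lef^k\overline{q}^m$ from $(\ref{partialraction})$: the coefficient of the top power $\Lef^{\,h+1-r}$ in $\overline{\partial}^{h+1}f$ equals $2^{h+1}\sum_m m^{h+1}\overline{a_m}\,\overline{q}^m$, and since distinct powers of $\Lef$ are linearly independent this must vanish, forcing $a_m=0$ for $m\neq0$ as before. (For $h=0$ one can also argue even more directly: $\overline{\partial}^{h+1}=\overline{\partial}$, so $f\in(\ker\partial)\cap(\ker\overline{\partial})=\C[\Lef^{\pm}]$ by Lemma~\ref{lemker}, and intersecting with $\mathcal{M}^!_{r,r}$ gives $\C\,\Lef^{-r}$.)
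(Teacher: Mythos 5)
Your proof is correct, but the second half takes a genuinely different route from the paper's. Both arguments begin identically, applying Lemma~\ref{lemker} to $\partial f=0$ to write $f=\Lef^{-r}\overline{g}$ with $g\in M^!_{-h}$. The paper then never touches $q$-expansions of $g$: it applies Lemma~\ref{lemker} a \emph{second} time, to $\overline{\partial}\big(\overline{\partial}^{h}f\big)=0$, to conclude that $\overline{\partial}^{h}f$ is simultaneously holomorphic and antiholomorphic, hence lies in $\C[\Lef^{\pm}]$; a weight count (powers of $\Lef$ sit on the diagonal $\sfh=0$, while $\overline{\partial}^h f$ has weights $(s,r)$) forces $\overline{\partial}^{h}f=0$ when $h>0$, and a descending induction via Lemma~\ref{lempartialiterate}, $\partial\overline{\partial}^k f=k(h-k+1)\overline{\partial}^{k-1}f$, then kills $f$. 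You instead identify $\overline{\partial}^{h+1}$ on the slice carrying $\overline{g}$ with $(2\Lef)^{h+1}\overline{D}^{h+1}$ via the conjugate of (\ref{EquationforBol}), so the hypothesis becomes $D^{h+1}g=0$ and all nonconstant Fourier modes die at once, with no induction and no second use of the kernel lemma. Your version is more computational but arguably more illuminating on one point: it exhibits $h+1$ as exactly the Bol exponent for weight $-h$, which is why the hypothesis $\overline{\partial}^{h+1}f=0$ is the right one. The paper's version stays entirely inside the $\ssl_2$-formalism (the same identity from Lemma~\ref{lempartialiterate} is reused in Proposition~\ref{propweaklift}) and avoids manipulating $q$-expansions. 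Your care about the weight bookkeeping when conjugating (\ref{EquationforBol}) and commuting $\Lef^{-r}$ past $\overline{\partial}$ is exactly the right thing to make explicit, and your fallback argument via (\ref{partialraction}) is also sound.
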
 
 \begin{proof}
 By lemma \ref{lemker}, $\partial f = 0 $ implies that $f\in \Lef^{-r} \overline{M}_{s-r}^!$. In particular, the coefficients in its expansion (\ref{fexp})
satisfy $a^{(k)}_{m,n}(f)=0$ if $m\neq 0$. This property is stable under $\overline{\partial}$, so the same holds for  all $\overline{\partial}^{n} f$.  
Again by  lemma \ref{lemker}, $ \overline{\partial}^{h+1} f =0$ implies that $\overline{\partial}^{h} f\in \Lef^{-s} M^!$, and its coefficients satisfy 
$a^{(k)}_{m,n}(\overline{\partial}^{h} f)=0$ if either  $m\neq 0$ or $n\neq 0$. It follows that $\overline{\partial}^{h} f \in \C[\Lef^{\pm}]$.
If $h=0$, then $ f \in \mathcal{M}^!_{r,r}$ and  we have shown that $f\in \C \Lef^{-r}$. 
Now if $h>0$,   $\overline{\partial}^{h} f  \in \mathcal{M}^!_{s,r} $, and it follows that $\overline{\partial}^h f =0$ since all powers of $\Lef$ lie on the diagonal $\sfh=0$. Applying the previous lemma to $f$ we find that
 $$ \partial \overline{\partial}^k f =  k ( h - k+1 )\,  \overline{\partial}^{k-1} f$$
and so by decreasing induction on $k$, for $k\leq h$, we deduce that $\overline{\partial}^k f$ vanishes for all $k\geq 0$. This  completes the proof. 
 \end{proof}

\section{The space $\HM^!$ of harmonic functions} \label{sectHM}

\begin{defn}
Let $\HM^! \subset \mathcal{M}^!$ (respectively, $\HM \subset \mathcal{M}$) denote the space of functions which are eigenvalues of the Laplacian.
For any $\lambda \in \C$ let 
$$\HM^!(\lambda) =  \ker \Big( \Delta-\lambda  : \mathcal{M}^!\To \mathcal{M}^!\Big)$$
denote the eigenspace with eigenvalue $\lambda$. 
\end{defn} 

\begin{lem} The space  $\HM^!(\lambda)$ is stable under the action of $\ssl_2$:
$$\partial, \overline{\partial} : \HM^!(\lambda) \To \HM^!(\lambda)$$
and furthermore, multiplication by $\Lef$ is an isomorphism
\begin{equation}\label{LmultonHM}  \Lef : \HM^!_{r+1,s+1}(\lambda) \To \HM^!_{r,s}(\lambda-r-s)\ .
\end{equation}
\end{lem}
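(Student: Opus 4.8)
The statement has two parts: (i) $\partial$ and $\overline{\partial}$ preserve each eigenspace $\HM^!(\lambda)$, and (ii) multiplication by $\Lef$ induces the stated isomorphism shifting the eigenvalue. Both parts should follow directly from the commutator identities collected in Lemma \ref{lemoperatoridentities}, so the plan is essentially to unwind those relations.

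For part (i): I would use the fact that $[\partial, \Delta] = [\overline{\partial}, \Delta] = 0$ from Lemma \ref{lemoperatoridentities}. If $f \in \HM^!(\lambda)$, i.e. $\Delta f = \lambda f$, then $\Delta(\partial f) = \partial(\Delta f) = \lambda (\partial f)$, so $\partial f \in \HM^!(\lambda)$, and similarly for $\overline{\partial}$. Since $\partial, \overline{\partial}$ together with $\sfh$ generate a copy of $\ssl_2$ (again Lemma \ref{lemoperatoridentities}), and $\Delta$ commutes with all of $\partial, \overline{\partial}$ (hence with $\sfh = [\partial,\overline{\partial}]$), the eigenspace $\HM^!(\lambda)$ is an $\ssl_2$-subrepresentation. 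This part is immediate.

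For part (ii): here I would use the relation $[\Lef, \Delta] = \sfw\, \Lef$ from Lemma \ref{lemoperatoridentities}, which rearranges to $\Delta\, \Lef = \Lef\, \Delta - \sfw\, \Lef = \Lef(\Delta - \sfw)$, the last step using $[\Lef, \sfw] = 2\Lef$ — wait, more carefully: I want to compute $\Delta(\Lef f)$ for $f \in \HM^!_{r+1,s+1}(\lambda)$. From $[\Lef,\Delta] = \sfw\,\Lef$ we get $\Delta \Lef f = \Lef \Delta f - \sfw \Lef f$. Now $\Lef f \in \mathcal{M}^!_{r,s}$ since $\Lef$ has bidegree $(-1,-1)$, so $\sfw(\Lef f) = (r+s)\Lef f$. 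Hence $\Delta(\Lef f) = \Lef(\lambda f) - (r+s)\Lef f = (\lambda - r - s)\Lef f$, showing $\Lef f \in \HM^!_{r,s}(\lambda - r - s)$. For the isomorphism claim, I would note that multiplication by $\Lef$ on $\mathcal{M}^!$ is injective (since $\Lef = -2\pi y$ is a nonvanishing function on $\HH$, or formally since it is a non-zero-divisor in the Laurent-series-type ring in which expansions live), and surjective onto $\HM^!_{r,s}(\lambda - r - s)$ because given any $g$ in the target, $\Lef^{-1} g$ lies in $\mathcal{M}^!$ (the expansion \eqref{fexp} is closed under multiplication by $\Lef^{\pm 1}$) and the same commutator computation run backwards shows $\Lef^{-1} g \in \HM^!_{r+1,s+1}(\lambda)$.

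I do not expect any serious obstacle; the only point requiring a line of care is the bookkeeping on which $\sfw$-eigenvalue $\Lef f$ carries (it is $r+s$, the weight of $\Lef f$, not $r+s+2$, the weight of $f$), and the verification that $\Lef$ is a bijection on the relevant function spaces rather than merely injective — both of which follow from the definition of $\mathcal{M}^!$ via expansions of the form \eqref{fexp} and the fact, noted after Lemma \ref{lemoperatoridentities}, that $\Lef$ is invertible within $\C[\Lef^{\pm}]$-modules of this shape.
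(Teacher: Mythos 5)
Your proof is correct and follows exactly the paper's argument: part (i) from $[\partial,\Delta]=[\overline{\partial},\Delta]=0$ and part (ii) from $[\Lef,\Delta]=\sfw\,\Lef$, with the same (correct) bookkeeping that the $\sfw$-eigenvalue appearing is that of $\Lef f$, namely $r+s$. Your added remarks on why $\Lef$ is a bijection are sound and merely make explicit what the paper leaves implicit.
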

\begin{proof}
The first equation follows since  $[\nabla, \partial]=[\nabla, \overline{\partial}]=0$ by lemma \ref{lemoperatoridentities}. For the   second, 
$[\Lef, \Delta] = w \Lef $  implies that 
 if $\Delta F = \lambda F$, then $\Delta(\Lef F) = (\lambda - w) \Lef F$. 
\end{proof}

The lemma remains true on replacing $\HM^!(\lambda)$ by $\HM(\lambda) = \HM^!(\lambda) \cap \mathcal{M}$.

\begin{lem} Every Laplace eigenvalue is an integer: 
$$\HM^! = \bigoplus_{n \in \Z}  \HM^!(n)\ .$$
Every element $F\in \HM^!(\lambda)$ has a unique decomposition 
\begin{equation} \label{Fdecompah0} F =  F^h + F^0 + F^a\ , 
\end{equation}
where $F^0 \in \C[\Lef^{\pm}]$ is the constant part of $F$, and 
\begin{eqnarray} F^h  &\in &  \C[q^{-1}, q]][\Lef^{\pm}] \nonumber  \\ 
F^a  &\in &  \C[\overline{q}^{-1}, \overline{q}]][\Lef^{\pm}] \nonumber 
\end{eqnarray}
are the (weakly)  `holomorphic' and `antiholomorphic' parts of $F$, and have no constant terms. 
Furthermore, each piece is an eigenfunction: $\Delta F^{\bullet} = \lambda F^{\bullet}$ for $\bullet \in \{h,0,a\}$. 
\end{lem}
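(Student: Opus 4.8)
The plan is to analyse the eigenvalue equation $\Delta F=\lambda F$ one $(q,\overline q)$-column at a time. Since $\Delta$ has bidegree $(0,0)$ it preserves each weight space $\mathcal{M}^!_{r,s}$, and by (\ref{partialraction}) together with (\ref{Deltarsdef}) it moreover preserves, for each fixed pair $(m,n)$, the span of the monomials $\{\Lef^k q^m\overline q^n:k\in\Z\}$. Applying (\ref{partialraction}) twice shows that $\Delta_{r,s}(\Lef^k q^m\overline q^n)$ is a linear combination of $\Lef^k q^m\overline q^n$, $\Lef^{k+1}q^m\overline q^n$ and $\Lef^{k+2}q^m\overline q^n$, in which the coefficient of $\Lef^{k+2}q^m\overline q^n$ is $-4mn$ and the coefficient of $\Lef^k q^m\overline q^n$ is $\alpha_k:=r(s-1)-(r+k)(s+k-1)\in\Z$. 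Writing a candidate eigenfunction $F\in\mathcal{M}^!_{r,s}$ as $F=\sum_{m,n}P_{m,n}(\Lef)\,q^m\overline q^n$, where each $P_{m,n}$ is a Laurent polynomial in $\Lef$ with exponents confined to $[-M,M]$ by (\ref{fexp}), the equation $\Delta F=\lambda F$ decouples into $\Delta_{r,s}(P_{m,n}(\Lef)\,q^m\overline q^n)=\lambda\,P_{m,n}(\Lef)\,q^m\overline q^n$ for every $(m,n)$, since $\Delta$ sends the $(m,n)$-column into itself.

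First I would eliminate the mixed columns. Suppose $mn\neq0$ and $P_{m,n}\neq0$, and let $k_0$ be the largest power of $\Lef$ occurring in $P_{m,n}$. Only the top monomial $\Lef^{k_0}q^m\overline q^n$ of $P_{m,n}q^m\overline q^n$ can feed the coefficient of $\Lef^{k_0+2}q^m\overline q^n$ in $\Delta_{r,s}(P_{m,n}q^m\overline q^n)$, and that contribution is $-4mn\,a^{(k_0)}_{m,n}\neq0$; but $\lambda\,P_{m,n}q^m\overline q^n$ has $\Lef$-degree at most $k_0$, a contradiction. Hence $P_{m,n}=0$ whenever $mn\neq0$, so $F$ is supported only on the columns $(m,0)$ with $m\neq0$, the columns $(0,n)$ with $n\neq0$, and the column $(0,0)$. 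Grouping these three blocks gives the decomposition $F=F^h+F^0+F^a$ of (\ref{Fdecompah0}): $F^0=P_{0,0}(\Lef)\in\C[\Lef^{\pm}]$, while $F^h\in\C[q^{-1},q]][\Lef^{\pm}]$ and $F^a\in\C[\overline q^{-1},\overline q]][\Lef^{\pm}]$ (finite principal parts because $m,n\geq-N$ in (\ref{fexp})), and both $F^h$ and $F^a$ have no constant term by construction. Uniqueness is immediate, since the three pieces are supported on disjoint sets of $(m,n)$; and since $\Delta$ preserves each column, each of $F^h$, $F^0$, $F^a$ is separately a $\lambda$-eigenfunction, which is the last assertion.

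It then remains to show $\lambda\in\Z$, and for this it suffices to examine one nonzero column of a nonzero $F$ (such a column exists among the surviving ones). On a column $q^m$ with $m\neq0$, so $n=0$, the formula of the first paragraph reads $\Delta_{r,s}(\Lef^k q^m)=\alpha_k\Lef^k q^m-2m(k+s)\Lef^{k+1}q^m$. Comparing the coefficients of $\Lef^{k_1}q^m$ on the two sides of $\Delta_{r,s}(P_{m,0}q^m)=\lambda\,P_{m,0}q^m$, where now $k_1$ is the \emph{lowest} power of $\Lef$ in $P_{m,0}$, one gets $(\lambda-\alpha_{k_1})\,a^{(k_1)}_{m,0}=0$ — the monomial $\Lef^{k_1}q^m$ gets nothing from $\Lef^{k_1-1}q^m$ since that coefficient vanishes — whence $\lambda=\alpha_{k_1}\in\Z$. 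A column $\overline q^n$ with $n\neq0$ is handled identically (or deduced by complex conjugation), and on the column $(0,0)$ the operator $\Delta_{r,s}$ is already diagonal in the $\Lef^k$, with eigenvalue $\alpha_k\in\Z$. In every case $\lambda$ is an integer, so $\HM^!=\bigoplus_{n\in\Z}\HM^!(n)$.

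I anticipate no genuine obstacle: the argument is essentially bookkeeping with the explicit action of $\Delta_{r,s}$ on monomials. The one point to keep straight is that the mixed columns are killed by inspecting the \emph{top} of the $\Lef$-expansion (exploiting the $-4mn\,\Lef^{k+2}$ term), whereas $\lambda$ is pinned down by inspecting the \emph{bottom} of the $\Lef$-expansion on a surviving column (exploiting that the two-term recursion relating consecutive $\Lef$-coefficients has integer coefficient $\alpha_k$); one also has to track which of the at most three monomials produced by $\Delta_{r,s}$ from $\Lef^k q^m\overline q^n$ feeds a given power of $\Lef$.
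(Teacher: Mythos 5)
Your proof is correct, and it is essentially the argument the paper is invoking: the paper's own proof is a one-line citation of lemma 5.2 of \cite{ZagFest}, and the column-by-column bookkeeping you carry out — $\Delta_{r,s}$ preserving each $(m,n)$-column, the $-4mn\,\Lef^{k+2}$ term killing the mixed columns, and the diagonal coefficient pinning down $\lambda$ — is the same computation underlying that lemma. Note as a consistency check that your $\alpha_k=r(s-1)-(r+k)(s+k-1)$ simplifies to $k(1-w-k)$, which matches the eigenvalue formula $\lambda=k_0(1-w-k_0)$ that the paper records in (\ref{F0shape}) immediately after the lemma.
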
 

\begin{proof}
 This was proved for the space $\HM$ in \cite{ZagFest}, lemma 5.2. The proof is more or less identical for $\mathcal{M}^!$.
 \end{proof}

One can be  more precise     (\cite{ZagFest}, \S5.1). Let $F\in \HM^!_{r,s}$ with eigenvalue $\lambda\in \Z$.  Let $w = r+s$ be the total weight. Then there exists a $k_0 \in \Z$ such that 
\begin{equation}\label{F0shape}  F^0  \quad \in \quad  \C \Lef^{k_0}   \oplus \C \Lef^{1-w-k_0}
\end{equation} 
where $k_0 < 1-w- k_0$ and  $\lambda = k_0 (1-w-k_0)$, and furthermore:
\begin{equation} \label{FaFhshape} F^h  \quad \in \quad  \bigoplus_{k=k_0}^{-s}  \C[q^{-1}, q]]  \Lef^{k}  \qquad , \qquad   F^a  \quad \in \quad  \bigoplus_{k=k_0}^{-r}  \C[\overline{q}^{-1}, \overline{q}]]  \Lef^{k} \ .
\end{equation}

\section{The space $\MI_1^!$ of weak modular primitives} \label{sectMI1}

The subspace $\MI^! \subset \mathcal{M}^!$ of modular iterated integrals was defined in \cite{ZagFest}. 

\begin{defn} Let $\MI^!_{-1} = 0$.  For every $k\geq 0$, let  $$\MI_k^! \subset \bigoplus_{r,s\geq 0} \mathcal{M}^!_{r,s}$$  be the largest subspace which is concentrated in the positive quadrant of $\mathcal{M}^!$ (with modular weights $(r,s)$ with $r, s\geq 0$) with the property that
\begin{eqnarray} \label{MIkcond} \partial \MI^!_k   & \subset  & \MI^!_k   +      M^! [\Lef] \otimes \MI^!_{k-1} \\
\overline{\partial} \MI^!_k   &  \subset & \MI^!_k   +      \overline{M^!} [\Lef] \otimes \MI^!_{k-1}  \nonumber 
\end{eqnarray} 
for all $k \geq 0$. 
We define $\MI^! = \sum_k \MI^!_k$.  It is closed under complex conjugation. 
\end{defn} 
We call the increasing filtration  $\MI^!_k \subset \MI^!$  the length. 
In this paper we shall focus only length $\leq 1$. We first dispense with the subspace of length $0$. 

\begin{prop}
$\MI^!_0 =  \C[\Lef^{-1} ]$. 
\end{prop}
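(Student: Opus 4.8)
The plan is to unwind the definition of $\MI_0^!$ directly. By definition, $\MI_0^!$ is the \emph{largest} subspace of $\bigoplus_{r,s\geq 0} \mathcal{M}^!_{r,s}$ concentrated in the positive quadrant and satisfying, with the convention $\MI^!_{-1}=0$, the two conditions
$$\partial \MI_0^! \subset \MI_0^! \qquad , \qquad \overline{\partial}\MI_0^! \subset \MI_0^! \ .$$
So we must identify the largest $\partial$- and $\overline{\partial}$-stable subspace living in modular weights $(r,s)$ with $r,s\geq 0$. First I would check the easy inclusion: $\C[\Lef^{-1}]$ sits in modular weights $(k,k)$ for $\Lef^{-k}$, all of which have $r=s=k\geq 0$, and by Lemma \ref{lemoperatoridentities} (or directly from \eqref{partialraction}, since $\partial_k \Lef^{-k} = (k+(-k))\Lef^{-k}=0$) both $\partial$ and $\overline{\partial}$ annihilate every $\Lef^{-k}$; in particular $\C[\Lef^{-1}]$ is stable under $\partial,\overline{\partial}$ and hence contained in $\MI_0^!$.

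For the reverse inclusion, suppose $V\subset\bigoplus_{r,s\geq0}\mathcal{M}^!_{r,s}$ is $\partial$- and $\overline{\partial}$-stable, and let $f\in V$ be a nonzero homogeneous element of weight $(r,s)$ with $r,s\geq 0$; I want to force $f\in\C\Lef^{-r}$ (with $r=s$). The key point is that repeatedly applying $\partial$ raises $r$ and lowers $s$, but we cannot leave the positive quadrant, so after at most $s$ steps we must land in $\ker\partial$: there is a least $j\geq 0$ with $\partial(\partial^j f)=0$ — indeed $\partial^{s+1}f$ has would-be weight $(r+s+1,-1)$, which is outside the positive quadrant, hence must vanish, so such a $j\leq s$ exists. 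Replacing $f$ by $g:=\partial^j f\in V$ (nonzero by minimality of $j$), $g$ lies in $\mathcal{M}^!_{r',s'}\cap\ker\partial$ with $r'=r+j$, $s'=s-j$, $r'\geq s'$. Symmetrically, applying $\overline{\partial}$ to $g$ raises $s'$ and lowers $r'$, and $\overline{\partial}^{r'+1}g$ would have weight $(-1,\ldots)$ outside the positive quadrant, so $\overline{\partial}^{r'+1}g=0$; since $h':=r'-s'\leq r'$ we also get $\overline{\partial}^{h'+1}g=0$. Now $g\in\mathcal{M}^!_{r',s'}$ satisfies exactly the hypotheses of Corollary \ref{corvanishing}: $\partial g=0$ and $\overline{\partial}^{h'+1}g=0$ with $h'=r'-s'\geq 0$. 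The corollary therefore gives $g\in\C\Lef^{-r'}$ if $r'=s'$, and $g=0$ if $h'>0$.

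It remains to propagate this back to $f$. If $g=0$ then by minimality of $j$ we have $g=\partial^j f$ with $\partial^{j-1}f\neq 0$ when $j\geq1$; but $g=\partial(\partial^{j-1}f)=0$ contradicts nothing directly, so instead I argue: if $j\geq 1$, then $\partial^{j-1}f$ is a nonzero element of $V$ with $\partial^2(\partial^{j-1}f)=\partial g=0$, and one checks using Lemma \ref{lempartialiterate} applied to the $\overline{\partial}$-tower — or more simply, re-run the above with $\partial^{j-1}f$ in place of $f$; iterating, we reduce to $j=0$, i.e. $\partial f=0$ from the start. Then $h=r-s\geq 0$ need not be automatic, so split: applying $\overline{\partial}$ repeatedly to $f$ and using that we stay in the positive quadrant forces $\overline{\partial}^{r+1}f=0$, hence $\overline{\partial}^{h+1}f=0$, and Corollary \ref{corvanishing} gives $f\in\C\Lef^{-r}$ if $r=s$ and $f=0$ otherwise. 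Thus every homogeneous element of $V$ lies in some $\C\Lef^{-r}$, so $V\subseteq\C[\Lef^{-1}]$, and combined with the first paragraph, $\MI_0^!=\C[\Lef^{-1}]$.

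\smallskip
The main obstacle I anticipate is the bookkeeping in the second and third paragraphs: making precise the claim that one can always reach $\ker\partial$ (respectively $\ker\overline{\partial}$) \emph{within the positive quadrant} before the weights go negative, and handling the case distinction $j=0$ versus $j\geq1$ cleanly so that Corollary \ref{corvanishing} applies to an element we can trace back to $f$. Everything else — the stability computation for $\C[\Lef^{-1}]$ and the invocations of Lemmas \ref{lemker}, \ref{lempartialiterate} and Corollary \ref{corvanishing} — is routine given the results already assembled in \S\ref{sectMI!}.
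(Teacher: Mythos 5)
Your strategy --- apply $\partial$ to a homogeneous element until it lands in $\ker\partial$, invoke Corollary \ref{corvanishing} there, and then trace back --- is genuinely different from the paper's, which inducts on $\min(r,s)$: it first settles the edge weights $(n,0)$ and $(0,n)$, where $\partial F$ (resp.\ $\overline{\partial}F$) falls outside the positive quadrant and hence vanishes outright, and then for $(r,s)$ with $r\geq s\geq 1$ uses the already-established vanishing in weights $(r+1,s-1)$ (which is off the diagonal since $r+1>s-1$) to get $\partial F=0$, multiplies by $\Lef^{s}$ to move to the edge, and applies Corollary \ref{corvanishing} there. The difference matters, because your route has a genuine gap at the ``propagation back'' step. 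When $j\geq 1$, Corollary \ref{corvanishing} together with $g=\partial^{j}f\neq 0$ forces $r'=s'$ and $g\in\C\Lef^{-r'}\setminus\{0\}$, but this says nothing about $f$: you must exclude the possibility that a nonzero multiple of $\Lef^{-r'}$ admits a $\partial$-primitive inside $\MI^!_0$ of weights $(r'-1,r'+1)$. Your proposed fix --- ``re-run the above with $\partial^{j-1}f$; iterating, we reduce to $j=0$'' --- is circular: the minimal exponent attached to $\partial^{j-1}f$ is $1$, and re-running the argument only reproduces the statement that $\partial(\partial^{j-1}f)$ lies in $\C\Lef^{-r'}$; it never decreases $j$. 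This is precisely the situation the paper's induction is built to avoid.

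Two further steps are asserted without proof. First, $r'\geq s'$ can fail: if $\partial f=0$ already (so $j=0$) and $r<s$, then $(r',s')=(r,s)$ and Corollary \ref{corvanishing}, whose hypothesis is $r\geq s$, does not apply; one must first reduce to $r\geq s$ by complex conjugation (legitimate, since $\MI^!$ is stable under conjugation, which exchanges the weights and the operators $\partial,\overline{\partial}$), as the paper does. Second, the inference ``$\overline{\partial}^{r'+1}g=0$ and $h'+1\leq r'+1$, hence $\overline{\partial}^{h'+1}g=0$'' is invalid as stated: vanishing of a higher power of an operator does not imply vanishing of a lower one. The conclusion is nevertheless true, but only via Lemma \ref{lempartialiterate}: since $\partial g=0$ one has $\partial\overline{\partial}^{k}g=k(h'-k+1)\overline{\partial}^{k-1}g$, and the coefficient $k(h'-k+1)$ is nonzero for every $k$ with $h'+2\leq k\leq r'+1$, so $\overline{\partial}^{r'+1}g=0$ does descend to $\overline{\partial}^{h'+1}g=0$. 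Your first paragraph (the inclusion $\C[\Lef^{-1}]\subset\MI^!_0$) is fine as written; but to close the argument you need the conjugation reduction, the Lemma \ref{lempartialiterate} descent, and --- most importantly --- an actual mechanism ruling out $j\geq 1$, for which the paper's induction on $\min(r,s)$ combined with multiplication by powers of $\Lef$ is the natural choice.
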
 

\begin{proof} Firstly, the space $\C[\Lef^{-1}]$ satisfies the conditions of the definition since $[\partial, \Lef] = [\overline{\partial}, \Lef]=0$, and so $\C[\Lef^{-1}] \subset \MI^!_0$.  Now let $F \in \MI^!_0$ be of  modular weights $(n,0)$, where $n\geq 0$. Since $\partial F$ has weights $(n+1,-1)$, which lies outside  the positive quadrant, we must by (\ref{MIkcond}) and $\MI^!_{-1}=0$ have $\partial F =0$. 
 Similarly, the element $F'=\overline{\partial}^n F$ has weights $(0,n)$ and  so  $\overline{\partial} F'=0$ since it  also lies outside the positive quadrant.   By corollary \ref{corvanishing},   $F$ vanishes if $n>0$ and $F\in \C$ if $n=0$.   By complex conjugation, it  follows that $\MI^!_0$ vanishes in modular weights $(0,n)$  and $(n,0)$ for all $n\geq 1$ and is contained in $\C$ in weights $(0,0)$. We can now repeat the argument for any $F\in \MI_0^!$ of modular weights $(n,1)$ by replacing $F $ with $\Lef F$ and arguing as above. We deduce that $\MI_0^!$ vanishes in all weights $(n,1)$ and $(1,n)$ for $n\geq 2$ and is contained in $\C \Lef^{-1}$ in weights $(1,1)$.  Continuing in this manner, we conclude that $\MI_0^! \subset \C[\Lef^{-1}]$. 
\end{proof}

\subsection{Modular iterated integrals of length one}It follows from the previous proposition that $\MI^!_1$ is the largest subspace of $\mathcal{M}^!$ which satisfies
\begin{eqnarray} \label{MIkcondlength1} \partial \MI^!_1   & \subset  & \MI^!_1   +      M^! [\Lef^{\pm}]  \\
\overline{\partial} \MI^!_1   &  \subset & \MI^!_1   +     \overline{M^!} [\Lef^{\pm}]  \nonumber \ .
\end{eqnarray} 
In particular, any element $F \in \MI^!_1$ of weights $(n,0)$, with $n\geq 0$,  satisfies 
$$\partial F = \Lef f$$
for some $f\in M_{n+2}^!$ weakly holomorphic of weight $n+2$. We call such an element  a  \emph{modular primitive} of $ \Lef f$.   It is necessarily a Laplace eigenfunction with eigenvalue $-n$ since $(\Delta + n) F = - \overline{\partial} \partial F= 0 $ by (\ref{Deltarsdef}). 

\begin{rem} As a consequence, $\Lef^{-1} F$ satisfies $\partial \Lef^{-1} F = f$ and $\Delta \Lef^{-1} F =0$. It  is therefore what is known as  a weak harmonic lift of $f$. 
\end{rem}

 \begin{prop}  \label{propweaklift} Let $n\geq 0$.  Let $f $ be a weakly holomorphic  modular form of weight $n+2$, and let $X_{n,0} \in \mathcal{M}^!$ be a primitive of $\Lef f$:
 $$\partial X_{n,0} = \Lef f\ .$$
  Then there exist unique elements $X_{r,s} \in \mathcal{M}_{r,s}^!$, for $r, s \geq 0$ and $r+s=n$ such that 
  \begin{eqnarray} \label{Xrseqns} \partial X_{r,s} & = &  (r+1) X_{r+1,s-1}  \quad \hbox{ for } s \geq 1   \\
   \overline{\partial} X_{r,s} & = &  (s+1) X_{r-1,s+1}   \quad \hbox{ for } r \geq 1   \nonumber
   \end{eqnarray} 
   and 
$$\overline{\partial} X_{0,n} = \Lef \overline{g}$$
for some $g\in M^!_{n+2}$ a weakly  holomorphic modular form of weight $n+2$.   It follows that $(\Delta+n) X_{r,s} =0$ for all $r+s=n$, i.e., $X_{r,s} \in \HM^!(-n)$. 
\end{prop}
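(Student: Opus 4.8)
The plan is to construct the $X_{r,s}$ by downward induction on $s$, starting from the given $X_{n,0}$, using lemma \ref{lemker} to control the ambiguity in taking $\overline{\partial}$-primitives, and then to check at the final stage that the compatibility $\overline{\partial} X_{0,n} = \Lef \overline{g}$ forces the right-hand side to be of the required shape. Concretely, suppose $X_{n,0}, X_{n-1,1}, \ldots, X_{r+1,s-1}$ have been defined and satisfy the $\partial$-equations among themselves. I would like to define $X_{r,s}$ by demanding $\partial X_{r,s} = (r+1) X_{r+1,s-1}$; the existence of such a primitive in $\mathcal{M}^!_{r,s}$ follows because $X_{r+1,s-1}$ lies in the image of $\partial$ applied to our candidate (one verifies that $X_{r+1,s-1}$, or rather $(r+1)X_{r+1,s-1}$, is $\partial$-exact by checking the expansion (\ref{fexp}) termwise using (\ref{partialraction}): the only obstruction to solving $\partial X = Y$ for $Y\in\mathcal{M}^!_{r,s}$ is the vanishing of the coefficient of the monomial $\Lef^{-r}q^m\overline q^n$ with $2m\Lef + r + k = 0$, which here means $m = 0$, $k = -r$; and this coefficient of $X_{r+1,s-1}$ vanishes because $X_{r+1,s-1}$ itself was obtained as a $\partial$-primitive or, in the base case, is $\Lef f$ which has no $\Lef^{-r-1}$-term).

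Having fixed one such primitive $X_{r,s}$, I must show it can be adjusted, uniquely, so that $\overline{\partial} X_{r,s} = (s+1) X_{r-1,s+1}$ holds. The point is that $X_{r,s}$ is determined by the $\partial$-equation only up to an element of $\mathcal{M}^!_{r,s}\cap\ker\partial \cong \Lef^{-r}\overline M^!_{s-r}$ (lemma \ref{lemker}); since $r > s - r$ would need $\overline M^!_{s-r}$ with $s - r < 0$, note weakly holomorphic forms of every weight exist, so this space is genuinely nonzero and the adjustment is real. One checks that applying $\overline{\partial}$ to $(r+1)X_{r+1,s-1} = \partial X_{r,s}$ and using $[\partial,\overline{\partial}] = \mathsf{h}$ together with the already-established equations for the previously constructed terms shows that $\partial\big(\overline{\partial}X_{r,s} - (s+1)X_{r-1,s+1}\big) = 0$, i.e.\ the discrepancy $E_{r-1,s+1} := \overline{\partial}X_{r,s} - (s+1)X_{r-1,s+1}$ lies in $\mathcal{M}^!_{r-1,s+1}\cap\ker\partial \cong \Lef^{1-r}\overline M^!_{s-r+2}$. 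I then want to kill $E_{r-1,s+1}$ by replacing $X_{r,s}$ with $X_{r,s} + C$ for a suitable $C\in\Lef^{-r}\overline M^!_{s-r}$: since $\overline{\partial}$ maps $\Lef^{-r}\overline M^!_{s-r}$ into $\Lef^{1-r}\overline M^!_{s-r+2}$ (via the complex conjugate of Bol's operator, up to the right power, cf.\ \S\ref{sectBol}), and this map is, in the relevant weight range, surjective onto the piece where $E_{r-1,s+1}$ lives — here one uses that $E$ is also $\partial$-closed and matches the image — one can solve for $C$. The adjusted $X_{r,s}$ then satisfies both required relations, and the remaining freedom in $C$ lies in $\ker\overline{\partial}\cap\ker\partial$ restricted to the off-diagonal weight $(r,s)$ with $r\neq s$, which is $0$ by lemma \ref{lemker}; this gives uniqueness of $X_{r,s}$.

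Carrying this induction all the way down produces $X_{0,n}$, and by construction $\partial X_{1,n-1} = X_{0,n}\cdot 1$-type relations hold and $X_{0,n}\in\mathcal{M}^!_{0,n}$. It remains to identify $\overline{\partial}X_{0,n}$. Applying $\partial$ to $\overline{\partial}X_{0,n}$ and using $[\partial,\overline{\partial}]=\mathsf h$ with $\mathsf h X_{0,n} = -n X_{0,n}$ and $\partial X_{0,n} = n X_{-1,n+1}$-style bookkeeping (there is no $X_{-1,n+1}$; rather $X_{0,n}\in\ker\partial$ is forced once one checks that $\partial X_{0,n}$ would have to equal a term outside the positive quadrant, exactly as in the proof of proposition \ref{propweaklift}'s analogue) shows $\partial\overline{\partial}X_{0,n} = 0$, hence $\overline{\partial}X_{0,n}\in\mathcal{M}^!_{-1,n+1}\cap\ker\partial \cong \Lef^{1}\,\overline{M^!_{n+2}}$ by lemma \ref{lemker}; writing this element as $\Lef\overline g$ with $g\in M^!_{n+2}$ is exactly the assertion. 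Finally, $(\Delta + n)X_{r,s} = -\overline{\partial}\partial X_{r,s} + r(s-1)X_{r,s} + \ldots$; using (\ref{Deltarsdef}) and the two families of first-order equations, a direct substitution gives $\Delta X_{r,s} = -n X_{r,s}$ for every $r + s = n$, so $X_{r,s}\in\HM^!(-n)$.

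\textbf{Main obstacle.} The delicate point is the surjectivity/matching step in the inductive adjustment: showing that the $\partial$-closed discrepancy $E_{r-1,s+1}$ is actually hit by $\overline{\partial}$ applied to the kernel space $\Lef^{-r}\overline M^!_{s-r}$. This is really a statement that the complex $(\mathcal{M}^!_{\bullet,\bullet}\cap\ker\partial, \overline{\partial})$ has no cohomology in the relevant bidegree, which should follow from the explicit description of these kernels as shifted spaces of weakly holomorphic forms together with Bol's identity (lemma \ref{lemBol}) controlling the image of $\overline{\partial}$; but making the weight bookkeeping precise — especially keeping track of which $\Lef$-powers and which form-weights occur, and confirming the discrepancy lands in the image rather than in a genuine obstruction space — is where the real work lies. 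An alternative, possibly cleaner, route is to bypass the step-by-step adjustment entirely: package the desired $X_{r,s}$ into the vector-valued function $X = \sum_{r+s=n} X_{r,s}(X - zY)^r(X-\overline z Y)^s$ and invoke the correspondence of \S\ref{vectvaluedandFrs}, so that the whole system (\ref{Xrseqns}) together with $\partial X_{n,0} = \Lef f$, $\overline{\partial}X_{0,n} = \Lef\overline g$ becomes the single equation $dX = \pi i f(z)(X-zY)^n dz + \pi i \overline{g(z)}(X-\overline z Y)^n d\overline z$ of (\ref{dFgeneral}); existence of a primitive of the closed $V_n\otimes\C$-valued $1$-form on the right (which is automatically closed since $f, g$ are holomorphic) is then a Poincaré-lemma statement on $\HH$, and the constraint that the primitive admit an expansion of the form (\ref{fexp}) pins down $g$. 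Uniqueness is then the statement that the difference of two solutions is a primitive of $0$, i.e.\ an equivariant constant-in-$z$ section, which by lemma \ref{lemker} (or corollary \ref{corvanishing}) is forced to vanish because the given datum $X_{n,0}$ is fixed. I would present the argument this second way if the first-order-system bookkeeping proves too cumbersome.
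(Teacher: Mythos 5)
There is a genuine gap here, and it comes from running the construction in the wrong direction. The system (\ref{Xrseqns}) determines each successive term from the previous one by \emph{differentiation}: the equations $\overline{\partial} X_{r,s} = (s+1)X_{r-1,s+1}$ for $r\geq 1$ force $X_{n-s,s} = \tfrac{1}{s!}\overline{\partial}^{s} X_{n,0}$, so once $X_{n,0}$ is given, existence and uniqueness of the whole family are immediate. All that remains is to verify the $\partial$-equations, which follow from the commutator identity (\ref{partialandbarequation}) together with $\overline{\partial}^k(\Lef f)=0$, and to identify $\overline{\partial}X_{0,n}$ --- which you do correctly: $\partial\overline{\partial}X_{0,n}=0$ plus lemma \ref{lemker} gives $\Lef\overline{g}$. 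That is the paper's proof. You instead build $X_{r,s}$ from $X_{r+1,s-1}$ by solving $\partial X_{r,s}=(r+1)X_{r+1,s-1}$, i.e.\ by \emph{integrating}, and this is where the argument breaks.

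First, your existence claim for the $\partial$-primitive rests on a termwise computation on the expansion (\ref{fexp}) using (\ref{partialraction}). That produces at best a formal, $T$-invariant solution; nothing in the argument guarantees invariance under $S$. Existence of \emph{modular} primitives is exactly the hard content of \S\ref{sectExistence} (it requires the period cocycles, and the resulting primitives have transcendental coefficients), so it cannot be dispatched by a formal recursion on Fourier coefficients --- if it could, the entire existence apparatus of the paper would be unnecessary. The same objection applies to your alternative Poincar\'e-lemma route: a primitive of the closed $1$-form exists on $\HH$, but it is equivariant only up to the period cocycle. Second, integrating introduces an ambiguity in $\Lef^{-r}\overline{M}^!_{s-r}$ at every step, which you must then kill by a surjectivity statement for $\overline{\partial}$ on these kernels --- the step you yourself flag as the main obstacle and leave unresolved --- and your count of the residual freedom ($\ker\partial\cap\ker\overline{\partial}=0$ in off-diagonal weights) fails at the middle weight $(n/2,n/2)$, through which the induction passes and where $\C\,\Lef^{-n/2}$ survives in both kernels. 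None of this machinery is needed: replace ``take a $\partial$-primitive of $(r+1)X_{r+1,s-1}$'' by ``apply $\tfrac{1}{s}\overline{\partial}$ to $X_{r+1,s-1}$'' and the proof reduces to lemma \ref{lempartialiterate} plus lemma \ref{lemker}.
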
 

\begin{proof}
Suppose that $X_{n,0}$ is a primitive of $\Lef f$. Define $X_{r,s} $ by the formula
\begin{equation} \label{Xrsdef} X_{r,s} =  \frac{\overline{\partial}^s}{s!} X_{n,0}  
\end{equation}
for all $r+s = n$, $r, s\geq0$.  The second equation of $(\ref{Xrseqns})$ holds for all $r,s$.
For the first equation, apply identity $(\ref{partialandbarequation})$ to $X_{n,0}$ to obtain
$$\partial \overline{\partial}^k X_{n,0}  - \overline{\partial}^k \partial X_{n,0}  =   k(n-k+1)  \overline{\partial}^{k-1} X_{n,0}\ .$$
 For $k\geq 1$ the second term is $\Lef \,\overline{\partial}^k f$, which vanishes. Therefore, by (\ref{Xrsdef}),
 $$ k! \, \partial X_{n-k,k} = k(n-k+1) (k-1)! X_{n-k+1,k-1}$$
 which is exactly the first equation of $(\ref{Xrseqns})$. Applying $\mathsf{h} = [\partial, \overline{\partial}] $ to $X_{0,n}$, and using the equations $(\ref{Xrseqns})$, one finds that $\partial \overline{\partial} X_{0,n}=0$. Therefore
 $$\overline{\partial} X_{0,n} \ \in \,  \mathcal{M}^!_{-1,n+1} \cap \ker \partial_{-1} ,$$
 and by lemma \ref{lemker},  it follows that   $ \overline{\partial} X_{0,n} = \Lef  \overline{g}$ for some $g \in M^!_{n+2}$ as claimed. 
 Finally, the fact that the $X_{r,s}$ are Laplace eigenfunctions with eigenvalue $-n$ follows easily from (\ref{Deltarsdef}),  (\ref{Xrseqns}) and, when $n=0$,  the equations $\partial X_{n,0}= \Lef f$, $\overline{\partial} X_{0,n} = \Lef \overline{g}$. 
 \end{proof}

\begin{rem} \label{remXassection} If we define $\XX: \HH \rightarrow \V_n \otimes \C$ by  
$$ \XX  = \sum_{r+s=n}  X_{r,s} (X- zY)^r (X- \overline{z}Y)^s$$
then $\XX$ is modular equivariant, and equations $(\ref{Xrseqns})$ are equivalent to
$$ d \XX =  {2 \pi i \over 2} \Big(    f(z) (X- z Y)^{n} dz  + \overline{g(z)} (X- \overline{z} Y)^n d\overline{z} \Big) \ . $$
The fact that the coefficients $X_{r,s}$ are eigenfunctions is equivalent to the identity 
$$ {\partial^2 \over \partial z \partial \overline{z} }\XX = 0 \ .$$ 
\end{rem}

We now turn to uniqueness. 

 \begin{lem} Let $X_{n,0}$ (respectively $X'_{n,0}$) be  modular  primitives of $\Lef f$, and let 
 $X_{r,s}, g$  (resp.  $X'_{r,s}, g'$)   be the quantities defined in proposition \ref{propweaklift}. Then  there exists an $\xi \in M^!_{-n}$ such that  for all $r+s= n$ and $r,s\geq0$ 
$$X'_{r,s} - X_{r,s} = \LL^{-n}\frac{\overline{\partial}^s} {s!} \overline{\xi}\ , $$ 
 and 
 $$ g'  - g  =  \frac{1}{n!}\overline{\partial}^{n+1} \Lef^{-n-1}  \xi \ .$$
In other words, $g$ and $g'$ are equivalent modulo $D^{n+1} M^!_{-n}$. 
\end{lem}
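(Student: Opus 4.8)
The plan is to reduce the whole lemma to the single difference $Y:=X'_{n,0}-X_{n,0}\in\M^!_{n,0}$ and to push it through the $\overline{\partial}$-recursion $(\ref{Xrsdef})$. Since both $X_{n,0}$ and $X'_{n,0}$ are primitives of $\Lef f$, we have $\partial Y=\Lef f-\Lef f=0$, so $Y$ lies in $\M^!_{n,0}\cap\ker\partial_n$. By lemma \ref{lemker} (with $r=n$, $s=0$) this space is $\Lef^{-n}\,\overline{M}^!_{-n}$, so there is a $\xi\in M^!_{-n}$ with $Y=\Lef^{-n}\overline{\xi}$; this is also the source of the non-uniqueness of primitives, since $M^!_{-n}\neq 0$.

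By the uniqueness part of proposition \ref{propweaklift}, the families in the statement are exactly the ones produced in its proof, i.e. $X_{r,s}=\tfrac1{s!}\overline{\partial}^{\,s}X_{n,0}$ and $X'_{r,s}=\tfrac1{s!}\overline{\partial}^{\,s}X'_{n,0}$ by $(\ref{Xrsdef})$. Subtracting, and using $[\overline{\partial},\Lef]=0$ (lemma \ref{lemoperatoridentities}), hence $[\overline{\partial},\Lef^{-n}]=0$, gives
\[
X'_{r,s}-X_{r,s}=\frac{\overline{\partial}^{\,s}}{s!}\,Y=\Lef^{-n}\,\frac{\overline{\partial}^{\,s}}{s!}\,\overline{\xi},
\]
which is the first claimed identity.

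For the statement about $g$, take the case $s=n$, $r=0$: subtracting $\overline{\partial}X_{0,n}=\Lef\,\overline{g}$ from $\overline{\partial}X'_{0,n}=\Lef\,\overline{g'}$ and substituting the previous display with $s=n$ yields
\[
\Lef\,(\overline{g'}-\overline{g})=\overline{\partial}\Big(\Lef^{-n}\,\frac{\overline{\partial}^{\,n}}{n!}\,\overline{\xi}\Big)=\Lef^{-n}\,\frac{\overline{\partial}^{\,n+1}}{n!}\,\overline{\xi}.
\]
Dividing by $\Lef$ and taking complex conjugates (and moving $\Lef^{-n-1}$ past the $\partial$'s, which is allowed since $[\partial,\Lef]=0$) gives the asserted formula for $g'-g$. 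Finally, Bol's identity in the form $(\ref{EquationforBol})$, namely $D^{n+1}=(\partial/2\Lef)^{n+1}$ on $\M^!_{-n,\bullet}$, shows that this right-hand side equals $\tfrac{2^{n+1}}{n!}\,D^{n+1}\xi$, so $g'-g\in D^{n+1}M^!_{-n}$, i.e. $g$ and $g'$ are equivalent modulo $D^{n+1}M^!_{-n}$.

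The argument is essentially bookkeeping once the dictionary is in place; the two points that need care are the appeal to lemma \ref{lemker} to pin $Y$ down to the explicit one-parameter family $\Lef^{-n}\overline{M}^!_{-n}$ — which makes $\xi$ and all of the differences completely explicit — and the recognition at the end, via $(\ref{EquationforBol})$, that the operator one obtains is exactly $D^{n+1}$. I would also remark that $\xi$ is in fact unique, because $\Lef^{-n}$ is nowhere vanishing, although the statement only claims existence.
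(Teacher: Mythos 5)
Your proof is correct and follows exactly the paper's (very terse) argument: apply Lemma \ref{lemker} to $X'_{n,0}-X_{n,0}\in\ker\partial$, propagate via $(\ref{Xrsdef})$, and identify the result with $D^{n+1}M^!_{-n}$ via $(\ref{EquationforBol})$. Note only that your (correct) computation produces $g'-g=\tfrac{1}{n!}\Lef^{-n-1}\partial^{n+1}\xi$ with a holomorphic derivative, whereas the statement as printed has $\overline{\partial}^{n+1}$ --- which would vanish identically on $\Lef^{-n-1}\xi$ --- so this is evidently a typo in the lemma rather than a gap in your argument.
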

\begin{proof} By lemma \ref{lemker}, $X'_{n,0}- X_{n,0} \in \Lef^{-n} \overline{M^!}_{-n}$. Apply (\ref{Xrsdef}) and (\ref{EquationforBol}) to conclude. 
\end{proof} 

\begin{cor}
If $X_{n,0}$ is a primitive of $\Lef f$, and $X_{r,s}$, $g$ are as defined in proposition \ref{propweaklift},   then $Y_{r,s} = \overline{X}_{s,r}$ is a system of solutions to the equations  $(\ref{Xrseqns})$ and satisfies 
$$\partial Y_{n,0} = \Lef g \quad \hbox{ and } \quad   \overline{\partial} Y_{0,n} = \Lef \overline{f} \ . $$
 Therefore complex conjugation reverses the roles of $f$ and $g$. 
\end{cor}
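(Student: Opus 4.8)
The plan is to take $Y_{r,s} = \overline{X}_{s,r}$ and verify directly that this family satisfies the system \eqref{Xrseqns} together with the stated boundary equations, simply by applying complex conjugation to the equations already known for the $X_{r,s}$. The key observation is that the operators $\partial$ and $\overline{\partial}$ are interchanged under complex conjugation: since $\partial_r = (z-\overline z)\partial/\partial z + r$ acts on $\mathcal{M}^!_{r,s}$ and its complex conjugate is $(\overline z - z)\partial/\partial \overline z + r = \overline{\partial}_r$ acting on $\mathcal{M}^!_{s,r}$, we have $\overline{\partial \varphi} = \overline{\partial}\,\overline{\varphi}$ for $\varphi \in \mathcal{M}^!_{r,s}$, and symmetrically. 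Also $\overline{\Lef} = \Lef$ since $\Lef = i\pi(z-\overline z)$ is real-valued (equivalently, modular of weights $(-1,-1)$ and real).

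Concretely, I would proceed as follows. First, note $Y_{r,s} = \overline{X}_{s,r} \in \mathcal{M}^!_{r,s}$ since $X_{s,r} \in \mathcal{M}^!_{s,r}$. Next, for $s \geq 1$ the equation $\overline{\partial} X_{s-1,r+1} = (s)\,X_{s,r}$ — wait, more carefully: from proposition \ref{propweaklift}, $\partial X_{r',s'} = (r'+1)X_{r'+1,s'-1}$ and $\overline{\partial} X_{r',s'} = (s'+1)X_{r'-1,s'+1}$. Taking complex conjugates of the second family of equations (with $(r',s') = (s,r)$, valid for $s = r' \geq 1$): $\overline{\overline{\partial} X_{s,r}} = (r+1)\overline{X}_{s-1,r+1}$, i.e. $\partial \overline{X}_{s,r} = (r+1)\overline{X}_{s-1,r+1}$, which reads $\partial Y_{r,s} = (r+1) Y_{r+1,s-1}$ for $s \geq 1$. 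Similarly conjugating the first family gives $\overline{\partial} Y_{r,s} = (s+1) Y_{r-1,s+1}$ for $r \geq 1$. For the boundary terms: conjugating $\overline{\partial} X_{0,n} = \Lef\,\overline g$ gives $\partial \overline{X}_{0,n} = \overline{\Lef\,\overline g} = \Lef g$, i.e. $\partial Y_{n,0} = \Lef g$; and conjugating $\partial X_{n,0} = \Lef f$ gives $\overline{\partial} Y_{0,n} = \Lef\,\overline f$. This establishes all the assertions, with the roles of $f$ and $g$ exchanged.

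There is no serious obstacle here — the corollary is a formal consequence of the compatibility of $\partial,\overline\partial$ with complex conjugation and the reality of $\Lef$. The only point requiring a moment's care is bookkeeping the index shifts so that the two families of equations in \eqref{Xrseqns} genuinely swap under $(r,s) \mapsto (s,r)$ and conjugation, and checking that the range conditions ($s \geq 1$ versus $r \geq 1$) transform correctly; this is immediate once written out. One could also remark that the $Y_{r,s}$ are automatically Laplace eigenfunctions with eigenvalue $-n$, either by the same conjugation argument applied to $(\Delta+n)X_{r,s}=0$ (using that $\Delta$ is real, i.e. commutes with conjugation, by \eqref{Deltarsdef}), or by invoking the final assertion of proposition \ref{propweaklift} directly for the pair $(Y,f) \leftrightarrow (Y,g)$.
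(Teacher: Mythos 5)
Your proof is correct and is exactly the formal conjugation argument the paper has in mind (the paper in fact omits the proof as immediate): conjugation swaps $\partial$ and $\overline{\partial}$, fixes $\Lef$, and sends $\mathcal{M}^!_{s,r}$ to $\mathcal{M}^!_{r,s}$, so conjugating the equations of proposition \ref{propweaklift} with indices $(s,r)$ yields precisely the stated system for $Y_{r,s}=\overline{X}_{s,r}$. Your bookkeeping of the index shifts and the ranges $s\geq 1$ versus $r\geq 1$ is accurate.
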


\subsection{Harmonic functions and structure of $\MI^!_1$}
We show that the modular primitives of  proposition \ref{propweaklift} generate $\MI^!_1$ under multiplication by $\Lef^{-1}$.   This section can be skipped and is not required for the rest of the paper.

\begin{prop} \label{propMIinHM}  Modular integrals of length one lie in the harmonic subspace of $\mathcal{M}^!$:
$$\MI^!_1  \subset \HM^!\ .$$
More precisely, any element $F \in \MI^!_1$  of modular weights $(r,s)$ can be uniquely decomposed as a linear combination of elements
$$F = \sum_{0\leq k \leq  \min\{r,s\}}    F_k$$
where $F_k \in \MI^!_1$ also has modular  weights $(r,s)$ and satisfies: 
$$\Delta  F_{k} =  (k-1)(r+s-k)\, F_k \ .$$
Specifically, if $r\geq s$, each $F_k$ is of the form $F_k = \Lef^{-k}\overline{\partial}^{s-k} X_k$, for some $X_k$ a  modular primitive of $\Lef f_k$, where $f_k \in M_{r+s+2-2k}^!$ is weakly holomorphic.   

In the case  $s\leq r$, then we can take $F_k = \Lef^{-k} \partial^{r-k} \overline{X}_k$, with $X_k$ a modular primitive  of $\Lef g_k$, where $g_k \in M_{r+s+2-2k}^!$ is weakly holomorphic.   
\end{prop}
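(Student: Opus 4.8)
The plan is to set up an induction on the \emph{length filtration} $\MI^!_1$, stratified by the quantity $k$ that measures how many factors of $\Lef^{-1}$ a given element carries, starting from elements lying in $\HM^!$ directly via Proposition \ref{propweaklift}. First I would observe that by the multiplication isomorphism (\ref{LmultonHM}) it suffices to treat the case $r\geq s$; the case $s\leq r$ follows by applying complex conjugation, which preserves $\MI^!_1$ and interchanges $\partial,\overline{\partial}$ and the roles of $f,g$ (as recorded in the corollary after the uniqueness lemma). So fix $F\in\MI^!_1$ of weights $(r,s)$ with $r\geq s$. The starting observation is that $\partial^{r-s}F$ has weights $(r+(r-s),s-(r-s))=(2r-s,2s-r)$, which for $r>s$ lies \emph{outside} the positive quadrant; combined with the defining property (\ref{MIkcondlength1}) of $\MI^!_1$ and $\MI^!_0=\C[\Lef^{-1}]$, iterating $\partial$ past the boundary of the quadrant must land in $M^![\Lef^{\pm}]$, i.e.\ one gets $\partial\big(\partial^{r-s}F\big)\in M^![\Lef^{\pm}]$ and more precisely, after reducing, that $F$ is controlled by a genuine modular primitive situation in weights $(r,s)$, $(r-1,s-1)$, $\ldots$ down to $(s,s)$, which explains the range $0\le k\le\min\{r,s\}=s$.

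The key steps, in order, are: (i) \textbf{Reduction to the quadrant edge.} Show that for $F\in\MI^!_1$ of weights $(r,s)$, $r\geq s$, the element $\partial^{r-s+1}F$ lies in $M^![\Lef^{\pm}]$, hence (by Lemma \ref{lemker} and (\ref{EquationforBol})) is, up to normalisation, $\Lef\,f_0$ for a weakly holomorphic $f_0\in M^!_{r+s+2}$; this handles the top piece $F_0$, namely $F_0$ is the (unique, after the splitting normalisation of \S\ref{sectWeakHol}) modular primitive produced by Proposition \ref{propweaklift} with $X_0=\overline{\partial}^{s}$-image, i.e.\ $F_0=\overline{\partial}^{\,s-0}X_0$-type. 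Subtracting $F_0$ kills the `top' Laplace eigenvalue. (ii) \textbf{Descent.} The difference $F-F_0$ still lies in $\MI^!_1$, but now $\partial^{r-s+1}(F-F_0)=0$, so by Lemma \ref{lemker} $F-F_0$ is `more holomorphic-in-$\Lef$' by one order; applying the operator identities of Lemma \ref{lemoperatoridentities}, in particular $[\Lef,\Delta]=\sfw\Lef$ and (\ref{LmultonHM}), $\Lef^{-1}$-rescaling $(F-F_0)$ produces an element of $\MI^!_1$ of weights $(r-1,s-1)$ to which one applies the inductive hypothesis. (iii) \textbf{Reassembly and eigenvalue bookkeeping.} Multiply the pieces obtained for weights $(r-1,s-1)$ back by $\Lef$, check via (\ref{LmultonHM}) that $\Lef^{-k}\overline{\partial}^{\,s-k}X_k$ indeed has Laplace eigenvalue $(k-1)(r+s-k)$ — this is a direct computation from (\ref{Deltarsdef}) and (\ref{Xrseqns}), noting $X_k$ lies in $\HM^!\big(-(r+s-2k)\big)$ by Proposition \ref{propweaklift} and each $\Lef^{-1}$ shifts the eigenvalue by the total weight at that stage — and conclude the decomposition $F=\sum_{0\le k\le s}F_k$. (iv) \textbf{Uniqueness.} Uniqueness of the decomposition follows because the eigenvalues $(k-1)(r+s-k)$ are distinct for distinct $k$ in the range $0\le k\le s$ (one checks the quadratic $k\mapsto(k-1)(r+s-k)$ is injective there, using $r\ge s$), so the $F_k$ are the projections of $F$ onto distinct $\Delta$-eigenspaces; Proposition \ref{propMIinHM}'s inclusion $\MI^!_1\subset\HM^!$ is then immediate as a byproduct.

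I expect the \textbf{main obstacle} to be step (i): precisely justifying that pushing $\partial$ across the edge of the positive quadrant forces membership in $M^![\Lef^{\pm}]$ \emph{with the sharp pole order}, rather than merely in $\MI^!_1+M^![\Lef^{\pm}]$. One has to argue that an element of $\MI^!_1$ in weights $(a,b)$ with $b<0$ must actually be a weakly holomorphic form times a Laurent polynomial in $\Lef$ — i.e.\ that there is no genuine length-one contribution outside the quadrant — which requires unwinding the maximality clause in the definition of $\MI^!_1$ together with the length-$0$ computation $\MI^!_0=\C[\Lef^{-1}]$, in the same style as the proof of that proposition (repeatedly killing coefficients $a^{(k)}_{m,n}$ via Lemma \ref{lemker} and Corollary \ref{corvanishing}). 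A secondary technical point is tracking the combinatorial constants (the binomial factors of the form in (\ref{Rrsdef}) and the factorials in (\ref{Xrsdef})) through the descent so that the reassembled $F_k$ has exactly the claimed shape $\Lef^{-k}\overline{\partial}^{\,s-k}X_k$; this is routine but must be done carefully. The eigenvalue distinctness check in step (iv) is elementary and should present no difficulty once $r\ge s$ is used.
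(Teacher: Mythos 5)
There is a genuine gap, and it sits exactly where you predicted the main obstacle would be: the ``reduction to the quadrant edge'' in step (i) rests on incorrect weight arithmetic. The element $\partial^{r-s}F$ has weights $(2r-s,\,2s-r)$, and $2s-r<0$ only when $r>2s$, not when $r>s$; for $s\leq r\leq 2s$ it is still inside the positive quadrant (e.g.\ $(r,s)=(3,2)$ gives $\partial F$ of weights $(4,1)$). The number of applications of $\partial$ needed to leave the quadrant is $s+1$, landing in weights $(r+s+1,-1)$ --- correspondingly, your chain of weights should terminate at $(r-s,0)$, not at $(s,s)$. Even after correcting the exponent, the assertion that $\partial^{s+1}F$ lies in $M^![\Lef^{\pm}]$ does not follow from (\ref{MIkcondlength1}): each intermediate application of $\partial$ sheds a term of $M^![\Lef^{\pm}]$ while still inside the quadrant, and $M^![\Lef^{\pm}]$ is not stable under $\partial$ (for $h\in M^!_j$ one has $\partial h = 2\Lef Dh + jh$, which is not of the form $\Lef^k u$ with $u$ weakly holomorphic unless $j=0$); moreover the $\MI^!_1$-components at each intermediate weight also contribute. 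So you cannot isolate a single ``top'' primitive $F_0$ by iterating $\partial$ to the edge. Step (ii) inherits the problem: the claim $\partial^{r-s+1}(F-F_0)=0$ is not established (and is generally false --- $\partial^{s+1}F_0\neq 0$ since $\partial X_0=\Lef f_0$), Lemma \ref{lemker} only controls the kernel of a single $\partial$, and the descent by $\Lef$-rescaling would additionally require $\Lef\cdot\MI^!_1\subset\MI^!_1$, which you do not address.

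The paper's proof avoids all of this by inducting on $s$ and applying $\partial$ only \emph{once} per step. For $s=0$ the edge argument is exactly right: $\partial F$ has weights $(n+1,-1)$, hence its $\MI^!_1$-component vanishes and $\partial F=\Lef f$. For $s\geq 1$, $\partial F\in\MI^!_1+M^![\Lef^{\pm}]$ has weights $(r+1,s-1)$, so the inductive hypothesis writes $\partial F = \Lef^{1-s}f+\sum_{k\leq s-1}\Lef^{-k}\tfrac{\overline{\partial}^{s-1-k}}{(s-1-k)!}X_k$; each summand visibly admits the modular primitive $\Lef^{-k}\tfrac{\overline{\partial}^{s-k}}{(s-k)!}X_k$ (by the proof of Proposition \ref{propweaklift}), and $X_s$ is then \emph{defined} as $\Lef^{s}$ times the remainder $F-\sum_{k\leq s-1}\Lef^{-k}\tfrac{\overline{\partial}^{s-k}}{(s-k)!}X_k$, which is checked to be a modular primitive of $\Lef f$. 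Your eigenvalue bookkeeping in (iii) and the uniqueness argument in (iv) (injectivity of $k\mapsto(k-1)(w-k)$ on $0\leq k\leq w/2$) do match the paper and are fine, as is the reduction of the case $s\geq r$ to $r\geq s$ by complex conjugation; but the inductive engine in (i)--(ii) needs to be replaced by the single-$\partial$ induction on $s$ for the argument to close.
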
 
\begin{proof}
Suppose that  $F$ is in $\MI^!_1$ of modular weights $(r,s)$ with $r\geq s$.  We show by induction on $s$ that  it is a linear combination:
\begin{equation}\label{Findsum} F = \sum_{0\leq k \leq s} F_k  \qquad \hbox{ where } \qquad F_k = \Lef^{-k}\frac{\overline{\partial}^{s-k}}{(s-k)!} X_k  \quad \in \quad \mathcal{M}_{r,s}^!
\end{equation} 
where $\partial X_k \in \Lef M^!$, and hence $X_k$ is a modular primitive of total weight $r+s-2k$. By proposition \ref{propweaklift}, $X_k$ is a Laplace eigenfunction with eigenvalue $2k-r-s$, and it follows from $[\Delta, \overline{\partial}]=0$ and (\ref{LmultonHM}) that  $ F_{k}$ is also an eigenfunction with eigenvalue 
$$(2k-w) + (w-2k) + (w-2k-2) + \ldots +(w-2) = (1-k)(w-k)$$
where we write $w=r+s$.  Since these eigenvalues are distinct for distinct values of $0\leq k \leq w/2$,  the $F_k$ are linearly independent and the decomposition is unique.

The statement (\ref{Findsum}) is true for $F$ of modular weights $(n,0)$:  in that case 
(\ref{MIkcondlength1}), together with the fact that $\partial F$ lies outside the positive quadrant, implies that
$$\partial F \quad \in \quad  M^![\Lef^{\pm}]$$
and hence $\partial F = \Lef f$, for some $f \in M^!_{n+2}$.  Therefore $F$ is a modular primitive of $\Lef f$, and by proposition \ref{propweaklift}, an eigenfunction of the Laplacian with eigenvalue $-n$.  Now suppose that $F \in \MI^!_1$ of modular weights $(r,s)$ with $r\geq s \geq 0 $ and suppose that (\ref{Findsum}) is true for all smaller values of $s$. Then since 
$$\partial F \quad  \in \quad \MI^!_1  +   M^![\Lef^{\pm}]$$
has modular weights $(r+1,s-1)$, the induction hypothesis implies that
$$\partial F =  \Lef^{1-s} f + \sum_{0\leq k \leq  s-1} \Lef^{-k}\frac{\overline{\partial}^{s-1-k}}{(s-1-k)!} X_k $$
for some $f \in M^!_{r-s+2}$. From the proof of proposition \ref{propweaklift}, each term $\frac{\overline{\partial}^{s-1-k}}{(s-1-k)!} X_k$ has a modular primitive $\frac{\overline{\partial}^{s-k}}{(s-k)!} X_k$. Define $X_s$ via the formula
$$  \Lef^{-s} X_s =  F-    \sum_{0\leq k \leq  s-1} \Lef^{-k}\frac{\overline{\partial}^{s-k}}{(s-k)!} X_k\ .$$
Then $X_s$ is a modular primitive of $\Lef f$ and $F$ is of the required form, completing  the induction step.  The case where $s\geq r$ follows by complex conjugating, which reverses the roles of $r$ and $s$.  Taking both cases together implies the first statement. \end{proof}
In particular :

\begin{itemize}
\item an element $F\in \MI^!_1$ of modular weights $(n,0)$ is necessarily an eigenfunction of the Laplacian with eigenvalue $-n$. 
\item an element $F\in \MI^!_1$ of modular weights $(n-1,1)$ is a linear combination of two eigenfunctions of the Laplacian with possible eigenvalues 
$\{-n, 0\}$. 

\item an element $F\in \MI^!_1$ of total weight $w$ can have eigenvalues in the set 
$$\{-w \ , \  0 \  , \  w-2 \ , \  2(w-3) \ ,  \ 3(w-4) \ ,  \ \ldots \ ,  \ \textstyle{w \over 2}(1- {w\over 2}) \}$$
\end{itemize} 
\begin{rem} Elements in $\MI^!_k$ for $k\geq 2$ are no longer harmonic and satisfy a more complicated structure with respect to the Laplace operator.  See, e.g., \cite{EqEis} \S11.3-4.
\end{rem}

\subsection{Ansatz for primitives} Recall that  for  $f\in M^!_{n+2}$ a weakly holomorphic modular form, 
the quantities 
$f^{(k)}$ and $R_{r,s}(f)$ were defined in (\ref{gkdefn}) and (\ref{Rrsdef}).
In particular, 
\begin{eqnarray} \label{Rn0} R_{n,0}(f)  &= & (-1)^n \sum_{k=0}^n \binom{n}{k} (-1)^k \frac{k!}{\Lef^k} f^{(k+1)}   \\
R_{0,n}(f)  &= & (-1)^n \frac{n!}{\Lef^n} f^{(n+1)} \nonumber  
\end{eqnarray} 
We shall write $R_{r,s}$ instead of $R_{r,s}(f)$ when $f$ is understood.

\begin{prop}  \label{propRrsproperties} The functions $R_{r,s}$ satisfy 
\begin{eqnarray} \partial_r R_{r,s} & = &   (r+1) R_{r+1, s-1} \quad \hbox{ for all }\, s\geq 1 \nonumber \\
 \partial_n R_{n,0} & = & (-1)^n \Lef f^{(0)} \ .\nonumber 
 \end{eqnarray}
Furthermore 
$$\overline{\partial}_s R_{r,s}  = \begin{cases}  (s+1) R_{r-1,s+1}  \quad \hbox{ if } r \geq 1 \nonumber \\ 0 \qquad \qquad  \qquad \qquad \hbox{ if } r=0 
\end{cases} \ . 
$$
\end{prop}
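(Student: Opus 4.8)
The plan is to verify the three families of identities directly from the definition \eqref{Rrsdef} of $R_{r,s}(f)$, using the action \eqref{partialraction} of $\partial_r$ and $\overline{\partial}_s$ on monomials $\Lef^k q^m \overline{q}^n$. The key elementary observation is that $f^{(k)}$ is weakly holomorphic (it is a power series in $q$ alone with no constant term, by \eqref{gkdefn}), so $\overline{\partial}_s$ acts on $\Lef^{-k} f^{(k+1)}$ by the operator $\Lef \mapsto$ (differentiate the $\Lef$-power), i.e.\ on $\mathcal{M}_{r,s}^!$ we have $\overline{\partial}_s(\Lef^{-k} f^{(k+1)}) = (s-k)\Lef^{-k} f^{(k+1)}$, while $\partial_r$ combines an $\Lef$-shift with the action of $D = q\,d/dq$. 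Crucially, from \eqref{gkdefn} one reads off $D f^{(k+1)} = \tfrac12 f^{(k)}$ (dividing by $2n$ drops one power when $D$ multiplies by $n$), and combined with \eqref{EquationforBol}/lemma \ref{lemBol} this is what converts a shift in the $\Lef$-exponent into a shift of the superscript in $f^{(k)}$.

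First I would establish the $\overline{\partial}$ relations, which are the cleanest. Since each $f^{(k+1)}$ is a function of $q$ only, applying $\overline{\partial}_s$ to \eqref{Rrsdef} term-by-term via \eqref{partialraction} gives $\overline{\partial}_s R_{r,s}(f) = (-1)^r\binom{n}{r}\sum_{k=s}^n \binom{r}{k-s}(-1)^k k!\,(s-k)\Lef^{-k} f^{(k+1)}$. When $r=0$ the binomial $\binom{0}{k-s}$ is nonzero only for $k=s$, and then the factor $(s-k)$ vanishes, so $\overline{\partial}_0 R_{0,s}=0$; note here necessarily $s=n$. For $r\geq 1$ I would compare the resulting sum with $(s+1)R_{r-1,s+1}(f)$, which means checking a binomial identity: after reindexing, the claim reduces to $\binom{n}{r}\binom{r}{k-s}(k-s) = (s+1)\binom{n}{r-1}\binom{r-1}{k-s-1}$, which follows from $\binom{r}{j}j = r\binom{r-1}{j-1}$ together with $\binom{n}{r}r = \binom{n}{r-1}(n-r+1)$ and $n-r+1 = s+1$. (The sign $(-1)^r$ on the left matches $(-1)^{r-1}$ on the right against the $(k-s)$ factor only after tracking that the summation range on the right is $k\ge s+1$, which is automatic since the $k=s$ term dropped.)

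Next the $\partial$ relations. Applying $\partial_r$ to a single term $\Lef^{-k} f^{(k+1)}$ of $R_{r,s}$: by \eqref{partialraction} and $\partial_r = \Lef\, d_z + r$ with $d_z f^{(k+1)} = (2\pi i)^{-1}f^{(k+1)\prime}$ — more usefully, $D f^{(k+1)} = \tfrac12 f^{(k)}$ — one gets a combination of $\Lef^{-k} f^{(k+1)}$ and $\Lef^{1-k} f^{(k)}$. Summing over $k$ as in \eqref{Rrsdef}, the terms should telescope/recombine into $(r+1)R_{r+1,s-1}(f)$ for $s\ge 1$; the identity to verify is again a binomial manipulation relating $\binom{r}{k-s}$-coefficients at consecutive values of the $\Lef$-power, using the same Pascal-type identities as above plus the shift $s\mapsto s-1$, $r\mapsto r+1$ which keeps $k-s$ aligned. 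For $s=0$ (so $r=n$), the sum $R_{n,0}$ is \eqref{Rn0}, and the recombination leaves exactly the single surviving boundary term $(-1)^n \Lef f^{(0)}$, since $f^{(0)} = f$ is weakly holomorphic and the internal terms cancel; one checks that the would-be term with a negative power of $\Lef$ has coefficient $\sum_k \binom{n}{k}(-1)^k$-type sum that vanishes, and the $\Lef^{+1}$ term is produced only from $k=0$.

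The main obstacle will be bookkeeping the binomial identities cleanly — in particular getting all the signs $(-1)^r$, $(-1)^k$ and the summation endpoints $k=s,\dots,n$ to line up when the indices shift, and handling the boundary cases ($r=0$ for $\overline\partial$, $s=0$ for $\partial$) where the generic telescoping argument degenerates. I expect the cleanest route is to avoid ad hoc index juggling by repackaging: write $R_{r,s}(f)$ using the generating-function identity $\sum_k \binom{r}{k-s}(-1)^k t^k = (-1)^s t^s (1-t)^r$ with $t = -k!/\Lef^{\,\cdot}$ replaced appropriately, or better, recognise $R_{r,s}$ as (up to normalisation) the $(r,s)$-component of the vector-valued object $\XX$ built from $f^{(\bullet)}$ via Bol's operator (lemma \ref{lemBol}, equation \eqref{EquationforBol}), so that the claimed relations are instances of the general system \eqref{partialFgeneral} from \S\ref{vectvaluedandFrs}. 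If that repackaging works, the proof becomes a one-line consequence of \eqref{dFgeneral}; if not, it is a finite, if tedious, verification of the two binomial identities above.
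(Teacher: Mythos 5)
Your proposal is correct and follows essentially the same route as the paper: term-by-term application of \eqref{partialraction} to $\Lef^{-k}f^{(k+1)}$ (giving $\Lef^{1-k}f^{(k)}+(r-k)\Lef^{-k}f^{(k+1)}$ for $\partial_r$ and $(s-k)\Lef^{-k}f^{(k+1)}$ for $\overline{\partial}_s$), reduction of the $s\geq 1$ and $r\geq 1$ cases to exactly the binomial identity you state, telescoping for $\partial_n R_{n,0}$, and degeneration of $\binom{r}{k-s}$ at $r=0$. The only slight imprecision is describing the cancellation in $\partial_n R_{n,0}$ as an alternating-binomial-sum vanishing; it is in fact a pairwise telescoping between consecutive indices $k-1,k$, as you also (correctly) say earlier, leaving only the $k=0$ term $(-1)^n\Lef f^{(0)}$.
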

\begin{proof} Let us write 
$$S_{r,s} = \sum_{k=s}^{n} \binom{r}{k-s} (-1)^k \frac{k!}{\Lef^{k}}  f^{(k+1)}\ ,$$
and show that for all $s\geq 1$, 
$$\partial_r S_{r,s} + s S_{r+1, s-1} =0 \ . $$
 We first verify using (\ref{partialraction}) that 
\begin{eqnarray} \partial_r \Lef^{-k} f^{(k+1)}  &= &\Big( \sum_{m\in \Z\backslash 0} {2m \over (2m)^{k+1}} a_m \Lef^{1-k } q^m \Big)+ (r-k)\, \Lef^{-k} f^{(k+1)} \nonumber \\
& = & \Lef^{1-k} f^{(k)}  + (r-k) \Lef^{-k} f^{(k+1)}  \ . \nonumber 
\end{eqnarray} 
It follows that 
\begin{multline} \partial_r S_{r,s} +s S_{r+1,s-1} = \sum_{k\leq n} \binom{r}{k-s} (-1)^k k! \Big( \Lef^{1-k} f^{(k)} + (r-k) \Lef^{-k} f^{(k+1)}\Big)    \nonumber \\ 
+ s \Big( \binom{r+1}{k-s+1} (-1)^k k! \Lef^{-k} f^{(k+1)} \Big) 
\end{multline}
Using $r+s=n$, the right-hand side reduces to 
$$\sum_{k\leq n} \binom{r}{k-s} (-1)^k  \frac{k!}{\Lef^{k}} f^{(k+1)} \Big[ \frac{- (k+1)(n-k)}{k-s+1} + (r-k) + \frac{s(r+1)}{k-s+1}\Big] = 0$$
since the term in square brackets simplifies to zero.  Finally, since
$$R_{r,s} = (-1)^r \binom{n}{r} S_{r,s}\ ,$$
we find that for all $s\geq 1$, 
\begin{eqnarray}  \partial_r R_{r,s} - (r+1) R_{r+1, s-1}  &=  & (-1)^r \frac{(r+s)!}{r! s!} \partial_r S_{r,s} - (-1)^{r+1} \frac{(r+s)!(r+1)}{(r+1)! (s-1)!}  S_{r+1,s-1}
\nonumber \\
&= & (-1)^r  \frac{(r+s)!}{r! s!} \big( \partial_r S_{r,s} + s S_{r+1,s-1}\big) \nonumber
\end{eqnarray}
which vanishes.  This proves the first equation.  For the second, by (\ref{Rn0}), we have
$$\partial_n R_{n,0} = (-1)^n \sum_{k=0}^n \frac{n!}{(n-k)!} (-1)^k \big[ \Lef^{1-k} f^{(k)} + (n-k) \Lef^{-k} f^{(k+1)}\big]$$
By telescoping, only the first term in square brackets (for $k=0$), and the second term (for $k=n$) survive. The latter is zero, and the former is 
  exactly $(-1)^n\Lef f^{(0)}$.

  For the last part, compare $\overline{\partial}_s R_{r,s}$ and $(s+1) R_{r-1,s+1}$ using:
     $$(-1)^r \binom{n}{r} \binom{r}{k-s} (s-k) = (-1)^{r-1} \binom{n}{r-1} \binom{r-1}{k-s-1} (n-r+1)$$
  where $n=r+s$. The case $r=0$ is immediate from lemma \ref{lemker}.
\end{proof}

\begin{lem}  \label{lemEsection} Let $E: \HH \rightarrow V_n\otimes \C$ be real analytic and $T$-equivariant such that
$$\frac{\partial E}{\partial z}= 0 \quad , \quad \frac{\partial E}{\partial \overline{z}} = c (X- \overline{z} Y)^n$$
where $c\in \C$. Then $c=0$ and $E = \frac{\alpha}{(\pi i)^n} Y^n$ for some $\alpha \in \C$. Writing 
$$E= \sum_{r+s=n} E_{r,s} (X- zY)^r (X- \overline{z} Y)^s$$
we find that 
$$E_{r,s} = \alpha  (-1)^r \binom{n}{r}  \Lef^{-n} \ .$$
If $E$ is modular equivariant and $n>0$ then  $\alpha$ vanishes.
\end{lem}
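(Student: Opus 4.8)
The plan is to work in the monomial basis $X^{n-j}Y^j$ of $V_n\otimes\C$, read off an explicit formula for $E$ from the two differential equations, then use $T$-equivariance to pin down the remaining constants, and only at the end translate back to the $(X-zY)^r(X-\overline z Y)^s$ presentation.

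First I would use $\partial E/\partial z = 0$: this says $E$ depends on $\overline z$ only, so $E=\sum_{j=0}^n e_j(\overline z)\,X^{n-j}Y^j$ with each $e_j$ holomorphic in $\overline z$. Expanding $c(X-\overline z Y)^n$ by the binomial theorem and comparing coefficients in $\partial E/\partial\overline z = c(X-\overline z Y)^n$ gives the scalar ODEs $e_j'(\overline z)=c\binom nj(-\overline z)^j$, whence $e_j(\overline z)=\tfrac{c}{j+1}\binom nj(-1)^j\overline z^{\,j+1}+d_j$ for constants $d_j\in\C$.

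Next I would impose $T$-equivariance $E(z+1)\big|_T = E(z)$ using $(X,Y)\big|_T=(X+Y,Y)$. The coefficient of $X^n$ (the case $m=0$ when one matches coefficients of $X^{n-m}Y^m$) already forces $e_0(\overline z+1)=e_0(\overline z)$; since $e_0(\overline z)=c\,\overline z+d_0$, periodicity gives $c=0$. With $c=0$ every $e_j$ is constant, so $E$ is a constant section of $V_n\otimes\C$ and $T$-equivariance merely says $E$ lies in $(V_n\otimes\C)^T$. A direct check (equivalently: $T$ acts on $V_n$ through a single unipotent Jordan block, with $Y$ spanning the fixed line) shows $(V_n\otimes\C)^T=\C\,Y^n$, so $E=\tfrac{\alpha}{(\pi i)^n}Y^n$ for a unique $\alpha$. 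To extract the $E_{r,s}$ I would expand $Y^n$ in the basis $(X-zY)^r(X-\overline z Y)^s$: from $(X-zY)-(X-\overline z Y)=(\overline z-z)Y$ together with $\overline z-z=i\Lef/\pi$ (from (\ref{Ldef})) one gets $Y=\tfrac{-\pi i}{\Lef}\big((X-zY)-(X-\overline z Y)\big)$, so raising to the $n$th power and collecting signs yields $Y^n=\tfrac{(\pi i)^n}{\Lef^n}\sum_{r+s=n}(-1)^r\binom nr (X-zY)^r(X-\overline z Y)^s$, hence $E_{r,s}=\alpha(-1)^r\binom nr\Lef^{-n}$. Finally, if $E$ is modular equivariant then each $E_{r,s}$ is modular of weights $(r,s)$; but $\Lef$ has weights $(-1,-1)$, so $\Lef^{-n}$ has weights $(n,n)$, and since the weights of a nonzero modular function are unique this is incompatible with $(r,s)$ whenever $n>0$, forcing $\alpha=0$ (equivalently, $E=\alpha'Y^n$ being $S$-equivariant would give $\alpha' X^n=\alpha' Y^n$, so $\alpha'=0$).

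I do not expect a genuine obstacle: the content is elementary linear algebra in $\mathrm{Sym}^n$ plus integrating first-order ODEs. The one step demanding care is the final normalization bookkeeping — keeping track of the factors of $\pi i$ and the signs $(-1)^r$ versus $(-1)^s$ through the change of basis so that the constant $\alpha/(\pi i)^n$ lands exactly on the claimed formula for $E_{r,s}$; the identity $\overline z - z = i\Lef/\pi$ (so that $Y$ has coefficient $-\pi i/\Lef$) is the place to be most careful about signs.
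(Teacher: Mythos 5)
Your proof is correct and follows essentially the same route as the paper: both isolate the coefficient of $X^n$ to force $c=0$ via $T$-periodicity, conclude that $E$ is a constant section lying in $V_n^T=\C Y^n$, and then expand $Y^n$ in the basis $(X-zY)^r(X-\overline{z}Y)^s$ using $\Lef=\pi i(z-\overline{z})$, with the same weight-mismatch argument for the final vanishing claim. The signs and powers of $\pi i$ in your change of basis all check out against the paper's computation.
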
 
\begin{proof}
Consider the function $e: \HH \rightarrow \C$ obtained by composing $E$ with $V_n \otimes\C  \rightarrow V_n \otimes \C /Y \C\cong \C$.   It is the coefficient of $X^n$ in $E$. 
It satisfies 
$\frac{\partial e}{\partial z}= 0 $ and $ \frac{\partial e}{\partial \overline{z}}= c$ and therefore $e= c \overline{z} + \beta$ for some $\beta \in \C$. Since $T$ fixes $Y$ and acts on $X$ by $T(X)= X+Y$, the condition of $T$-invariance implies that $e(z+1)=e(z)$. This forces $c=0$.  
It follows that $\frac{\partial}{\partial z} E= \frac{\partial}{\partial \overline{z}} E=0$ and so $E$ is constant. By $T$-invariance, $E$ lies in $V_n^T = \C Y^n$, and hence $E = \frac{\alpha}{(\pi i)^n} Y^n$ for some $\alpha \in \C$.  But
\begin{multline} E =\frac{\alpha}{ (\pi i)^n} Y^n =   \frac{\alpha}{ (\pi i)^n}\frac{1}{(z-\overline{z})^n} \Big(  (X- \overline z Y) - (X- zY) \Big)^n \nonumber \\
  = \alpha    \Lef^{-n} \sum_{r+s=n}  (-1)^r \binom{n}{r}  (X- z Y)^r (X- \overline{z} Y)^s \end{multline} 
since $\Lef = \pi i (z - \overline{z})$, which proves the formula for $E_{r,s}$. 

Finally, if $E$ is  modular equivariant,  $E_{r,s}  \in \C \Lef^{-n}$ is modular of weights $(r,s)$ with $r+s=n>0$. But $\Lef^{-n}$ is modular of weights $(n,n)$, which implies that $E_{r,s}= 0$. 
\end{proof} 

\begin{cor} Let $f\in M_{n+2}^!$ be a weakly holomorphic modular form. Let $X_{n,0}$ be a modular primitive of $\Lef f$, and let $X_{r,s}$ and $g \in M_{n+2}^!$ be as determined by proposition \ref{propweaklift}. Then the zeroth Fourier coefficients of  $f$ and $g$ are conjugate:
$$a= a_0 (f) = \overline{a_0(g)} $$
and  there exists some $\alpha \in \C$ such that 
\begin{equation} \label{Xrsexplicitformula} X_{r,s} =  {a \,  \over n+1} \Lef +  \alpha (-1)^r \binom{n}{r}  \Lef^{-n}  + R_{r,s}(f) + \overline{R_{s,r}(g)}
\end{equation}
for all $r,s\geq 0$ and $r+s=n$. 
\end{cor}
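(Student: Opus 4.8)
The plan is to combine the three ingredients we now have: the $R_{r,s}$ ansatz and its differential properties (Proposition~\ref{propRrsproperties}), the existence/uniqueness structure of $X_{r,s}$ (Proposition~\ref{propweaklift} together with the vector-valued reformulation in Remark~\ref{remXassection}), and the rigidity lemma for $T$-equivariant sections (Lemma~\ref{lemEsection}). The strategy is to show that the difference
$$
E_{r,s} := X_{r,s} - \frac{a}{n+1}\Lef - R_{r,s}(f) - \overline{R_{s,r}(g)}
$$
assembles into a section $E=\sum_{r+s=n}E_{r,s}(X-zY)^r(X-\overline zY)^s$ that satisfies the hypotheses of Lemma~\ref{lemEsection}, and then read off the claimed shape of $X_{r,s}$ from that lemma's conclusion $E_{r,s}=\alpha(-1)^r\binom{n}{r}\Lef^{-n}$.

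First I would pin down $a$. By Proposition~\ref{propweaklift}, $\partial X_{n,0}=\Lef f$ and $\overline\partial X_{0,n}=\Lef\overline g$; applying Proposition~\ref{propRrsproperties} to $f$, the function $(-1)^n R_{n,0}(f)$ is also a $\partial$-primitive of $\Lef f^{(0)}$ where $f^{(0)}=f-a_0(f)$ (note $f^{(0)}$ in~(\ref{gkdefn}) omits the $n=0$ term), so the only term of $X_{n,0}$ not accounted for by $R_{n,0}(f)$ and $\overline{R_{0,n}(g)}$ in the $\partial$-equation is the constant term $a_0(f)$; the unique modular primitive of $\Lef\cdot a_0(f)$ among the functions $c\Lef$ is $\frac{a_0(f)}{n+1}\Lef$, using $\partial_r\Lef = (r+1)\Lef$ when $r+s=n$ is irrelevant — more precisely $\partial(c\Lef)$ has weights $(n+1,-1)$ and by the correspondence of~\S\ref{vectvaluedandFrs} one checks $\frac{a}{n+1}\Lef$ is forced. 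Matching the antiholomorphic $\overline\partial X_{0,n}=\Lef\overline g$ equation the same way forces $\overline{a_0(g)}=a$, giving the first assertion. (The cleanest route is actually to apply the vector-valued form: $dX = \pi i\, f(z)(X-zY)^n dz + \pi i\,\overline{g(z)}(X-\overline zY)^n d\overline z$ by Remark~\ref{remXassection}, and the analogous identity $d\big(\sum R_{r,s}(f)(X-zY)^r(X-\overline zY)^s\big)$ from Proposition~\ref{propRrsproperties} involves only $f^{(0)}=f-a$, so the difference $dX - d(\cdots)$ equals $\pi i\, a\,(X-zY)^n dz + \pi i\,\overline{a_0(g)}(X-\overline zY)^n d\overline z$, which is $d\big(\tfrac{a}{n+1}\Lef\big)$ iff $\overline{a_0(g)}=a$.)

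Next I would verify the hypotheses of Lemma~\ref{lemEsection} for $E$. Since each of $X_{r,s}$, $\frac{a}{n+1}\Lef$, $R_{r,s}(f)$, $\overline{R_{s,r}(g)}$ is modular of weights $(r,s)$, the section $E$ is modular equivariant, hence in particular $T$-equivariant. To compute $dE$: Proposition~\ref{propweaklift}/Remark~\ref{remXassection} gives $dX$; Proposition~\ref{propRrsproperties} gives that the $R$-section $\mathsf R(f):=\sum R_{r,s}(f)(X-zY)^r(X-\overline zY)^s$ satisfies $d\mathsf R(f) = \pi i\,(-1)^?\,f^{(0)}(X-zY)^ndz$ (holomorphic only, since $\overline\partial_s R_{r,s}=(s+1)R_{r-1,s+1}$ with no $d\overline z$ source) and symmetrically $d\overline{\mathsf R(g)}$ contributes only the $d\overline z$ source $\propto \overline{g^{(0)}}(X-\overline zY)^n d\overline z$, while $d(\tfrac{a}{n+1}\Lef)$ supplies the missing constant terms on both sides. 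Assembling, every source term cancels, so $dE=0$, i.e. $\frac{\partial E}{\partial z}=\frac{\partial E}{\partial\overline z}=0$; in particular the hypothesis of Lemma~\ref{lemEsection} holds with $c=0$. Lemma~\ref{lemEsection} then gives $E_{r,s}=\alpha(-1)^r\binom nr\Lef^{-n}$ for a single $\alpha\in\C$, which is exactly~(\ref{Xrsexplicitformula}).

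The main obstacle is bookkeeping the signs and binomial normalisations so that the source terms in $d\mathsf R(f)$ and $d\overline{\mathsf R(g)}$ match those in $dX$ exactly after subtracting $d(\tfrac a{n+1}\Lef)$ — i.e. checking that the residual of $\partial_n R_{n,0}(f)$, namely $(-1)^n\Lef f^{(0)}$ from Proposition~\ref{propRrsproperties}, combines with the correctly-normalised $\overline{R}$ term and the $\Lef$ term to reproduce $\Lef f$ and $\Lef\overline g$ on the nose. Once the two identities $dX = \pi i f(z)(X-zY)^ndz + \pi i\overline{g(z)}(X-\overline zY)^nd\overline z$ (with the normalisation of Remark~\ref{remXassection}) and $d\mathsf R(f)$, $d\overline{\mathsf R(g)}$ are written in compatible normalisation, the cancellation is mechanical; everything else is a direct application of Lemma~\ref{lemEsection}.
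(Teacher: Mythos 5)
Your proposal is correct and follows essentially the same route as the paper: subtract the ansatz $Y_{r,s} = \tfrac{a}{n+1}\Lef + R_{r,s}(f) + \overline{R_{s,r}(g)}$ from $X_{r,s}$, assemble the difference into a real analytic $T$-equivariant section $E$ of $V_n\otimes\C$, and apply Lemma \ref{lemEsection}. The one point that needs reordering is your first paragraph: the equality $a=\overline{a_0(g)}$ is not ``forced by matching'' prior to invoking the lemma, since a priori the difference section only satisfies $\partial E/\partial z=0$ and $\partial E/\partial\overline{z}=\pi i\,(\overline{a_0(g)}-a)(X-\overline{z}Y)^n$, which is not obviously zero; this is exactly the hypothesis of Lemma \ref{lemEsection} with $c=\pi i(\overline{a_0(g)}-a)$, and it is the lemma's conclusion $c=0$ that delivers $a=\overline{a_0(g)}$ simultaneously with $E_{r,s}=\alpha(-1)^r\binom{n}{r}\Lef^{-n}$ --- which is precisely how the paper argues, so your ``pinning down $a$'' step should be folded into the application of the lemma rather than precede it (as written, your claim that ``every source term cancels, so $dE=0$'' presupposes the equality you are trying to prove).
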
 

\begin{proof} Let $a=a_0(f)$.   Define 
$$Y_{r,s}  =   {a \,  \over n+1} \Lef   + R_{r,s}(f) + \overline{R_{s,r}(g)}\ . $$
We first check that the expression for $Y_{r,s}$ satisfies the equations (\ref{Xrseqns}).   By (\ref{partialraction}), we have $\partial_r \Lef = (r+1) \Lef$, and by proposition \ref{propRrsproperties}, we deduce that 
$\partial_r Y_{r,s} = (r+1) Y_{r+1,s-1}$ for all $s\geq 1$.  
Similarly, using the fact that $n$ is even, we check that
$$\partial_n Y_{n,0} = a \, \Lef +      \partial_n R_{n,0}(f) =  a \, \Lef + (-1)^n f^{(0)} \, \Lef = \Lef \, f\  \ . $$ 
By complex conjugating,  $\overline{\partial}_s Y_{r,s} = (s+1) Y_{r-1,s+1}$ for all $r\geq 1$, and 
$$  \overline{\partial}_n Y_{0,n} = a \, \Lef + \overline{\partial}_n \overline{R_{n,0}(g)} = \Lef (a + \overline{g}^{(0)})\ . $$
Define $E_{r,s}= X_{r,s}- Y_{r,s}$. The function $E= \sum_{r+s=n} E_{r,s} (X- z Y)^r (X- \overline{z} Y)^s$ satisfies 
$$\frac{\partial E}{\partial z} =  0 \quad \hbox{ and } \quad \frac{\partial E}{\partial \overline{z}} = \pi i ( \overline{a_0(g)} - a ) (X - \overline{z} Y)^n $$
by (\ref{dFgeneral}).
It is a real analytic and $T$-invariant section of $V_n \otimes \C$, since  $X_{r,s}$ and $Y_{r,s}$ are $T$-invariant. By the previous lemma we conclude that there exists an $\alpha \in \C$ such that 
$$X_{r,s} = \alpha (-1)^r \binom{n}{r}  \Lef^{-n}  + Y_{r,s}$$ 
for all $r+s =n$, and furthermore, that $\overline{a_0(g)} = a$.   
\end{proof}
We shall determine the unknown coefficient $\alpha$ using Hecke operators.  Another way to prove the corollary is to use the fact that $X_{r,s}$ are eigenfunctions of the Laplacian (proposition  \ref{propMIinHM}) and the explicit shape (\ref{F0shape}) and (\ref{FaFhshape}) for the latter.  We chose the approach above since it explains the origin of the indeterminate  coefficient $\alpha$, and since functions in $\MI$ are not harmonic in general. 

\begin{cor}
A modular primitive of $\Lef f$, if it exists, is of the form:
\begin{equation}
X_{n,0} = \frac{a}{n+1}\Lef + \frac{\alpha}{\Lef^{n}} + \frac{n!}{\Lef^{n}}\overline{g^{(n+1)}}+  \sum_{k=0}^n \binom{n}{k} (-1)^k \frac{k!}{\Lef^k} f^{(k+1)} \ .
\end{equation}
\end{cor}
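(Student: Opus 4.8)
The plan is to obtain the corollary directly from formula~(\ref{Xrsexplicitformula}) of the preceding corollary, specialised to the index $(r,s)=(n,0)$. A modular primitive $X_{n,0}$ of $\Lef f$, should one exist, produces via Proposition~\ref{propweaklift} the whole family $X_{r,s}$ together with a weakly holomorphic $g\in M^!_{n+2}$, and the previous corollary already expresses each $X_{r,s}$ in closed form; so all that is needed is a substitution, not a new argument.

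Concretely, I would first recall the explicit values~(\ref{Rn0}), namely $R_{n,0}(f)=(-1)^n\sum_{k=0}^n\binom{n}{k}(-1)^k\frac{k!}{\Lef^k}f^{(k+1)}$ and $R_{0,n}(g)=(-1)^n\frac{n!}{\Lef^n}g^{(n+1)}$. Since $\Lef=-2\pi\,\mathrm{Im}(z)$ is real and $n!,(-1)^n$ are rational, complex conjugation affects only the modular form, so $\overline{R_{0,n}(g)}=(-1)^n\frac{n!}{\Lef^n}\overline{g^{(n+1)}}$. Then I would put $r=n$, $s=0$ in~(\ref{Xrsexplicitformula}), using $\binom{n}{n}=1$, to get
\[
X_{n,0}=\frac{a}{n+1}\Lef+\alpha(-1)^n\Lef^{-n}+(-1)^n\sum_{k=0}^n\binom{n}{k}(-1)^k\frac{k!}{\Lef^k}f^{(k+1)}+(-1)^n\frac{n!}{\Lef^n}\overline{g^{(n+1)}}.
\]

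The last point is to remove the factor $(-1)^n$: one observes that $n$ must be even, since otherwise $n+2$ is odd and $M^!_{n+2}=0$ for $\SL_2(\Z)$ (the element $-I$ acts by $(-1)^{n+2}=-1$), so $f=0$ and there is nothing to prove — or, in the same vein, $X_{n,0}$ itself, having modular weights $(n,0)$, would be forced to vanish for $n$ odd. For $n$ even, $(-1)^n=1$ and the display is precisely the claimed expression, with $a=a_0(f)$ and $\alpha$ the constants supplied by the previous corollary. I do not expect any genuine obstacle here; the only things worth spelling out are the reality of $\Lef$ (so that conjugation touches only $g^{(n+1)}$) and the parity remark, which is also why the statement is phrased conditionally as ``if it exists''.
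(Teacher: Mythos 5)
Your proposal is correct and is exactly the intended derivation: the paper states this corollary as an immediate specialisation of formula (\ref{Xrsexplicitformula}) to $(r,s)=(n,0)$, using the explicit values (\ref{Rn0}) and the reality of $\Lef$. Your parity remark (that $n$ must be even, so $(-1)^n=1$) is the right way to reconcile the signs, and is consistent with the paper's own use of ``the fact that $n$ is even'' in the proof of the preceding corollary.
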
 

\subsection{Example: real analytic Eisenstein series}
Let $\mathcal{E}_{r,s}$ denote the functions defined in the introduction. 
By \cite{ZagFest}, proposition 4.3, and  equation (\ref{Xrsexplicitformula}), we have
$$\mathcal{E}_{r,s} = \mathcal{E}^0_{r,s}  + R_{r,s}(\GE_{w+2}) + R_{s,r}(\overline{\GE_{w+2}})\ ,$$
where
$$ \mathcal{E}^0_{r,s}=  - \frac{B_{w+2}}{2 (w+1)(w+2)} \Lef + (-1)^r \binom{w}{r} \frac{w!}{2^{w+1}} \zeta(w+1) \Lef^{-w}\ .$$
In this example the coefficient $\alpha$ is  an odd zeta value, which is the period of a non-trivial extension of Tate motives,  and is conjecturally transcendental. It can be obtained as a special value of a suitably-defined $L$-function of $\mathcal{E}_{r,s}$, which will be discussed elsewhere. 

\section{Hecke operators} \label{sectHecke} 
We review some  basic properties of Hecke operators. For any $\alpha \in \mathrm{GL}_{2}(\R)$ write 
\begin{equation}\label{alphanotation}   \alpha = \begin{pmatrix}  a_{\alpha} & b_{\alpha} \\ c_{\alpha} & d_{\alpha} \end{pmatrix}\ ,
\end{equation}
i.e., $a,b,c,d$ are the standard generators on the affine ring $\Or(\mathrm{GL}_2)$. 
\subsection{Definition} 
Let $f: \HH \rightarrow V_n \otimes \C$ be real analytic and equivariant. Let $m\geq 1$ be an integer, and let 
$M_m$ denote the set of $2\times 2$ matrices with integer entries which have  determinant $m$.  The Hecke operator is defined by the formula\footnote{The reason for the factor $m^{-1}$  is that $f$  is a function; the usual formula for Hecke operators involves one-forms: for  $\alpha$ as in (\ref{alphanotation}),
$$ d (\alpha z) = { \det(\alpha) \over (c_{\alpha} z+d_{\alpha})^2} dz\ , $$
and the $\det(\alpha)$ accounts for an extra multiple of $m$ in the  formula for $T_m$.}

$$T_m f (z) = {1 \over m} \sum_{\alpha \in \Gamma\backslash M_m} f(\alpha z) \big|_{\alpha}\ .$$
Since $f$ is equivariant, it follows that  for all $\gamma \in \Gamma$, 
$$f(\gamma \alpha z)\big|_{\gamma \alpha} = f(\gamma ( \alpha z) )\big|_{\gamma} \big|_{\alpha} = f(\alpha z)\big|_{\alpha}$$
and hence the formula for $T_m f$ is well-defined. The set of cosets $ \Gamma \backslash M_m $ is finite, and are described below.
  Since right multiplication by any $\gamma \in \Gamma$ defines a bijection of cosets 
$ \Gamma \backslash M_m \overset{\sim}{\To}  \Gamma \backslash M_m $
we deduce from the  calculation 
$$(T_m f)(\gamma z)\big|_{\gamma} = \sum_{\alpha \in  \Gamma \backslash M_m} f( \alpha \gamma z)\big|_{\alpha \gamma} =\sum_{\alpha' \in  \Gamma \backslash M_m} f( \alpha'  z)\big|_{\alpha'} = T_mf(z)$$
 that  $T_m f : \HH \rightarrow V_n \otimes \C$ is equivariant. Via the dictionary \S\ref{vectvaluedandFrs} between equivariant vector-valued modular forms and modular forms of weights $(r,s)$, we deduce an action of $T_m$ on the latter.  It is given by the following formula. 
 
 \begin{lem} \label{lemTmf} If $f$ is real analytic modular of weights $(r,s)$ then 
 $$T_m f = \sum_{\alpha \in \Gamma \backslash M_m} { m^{r+s-1}  \over (c_{\alpha}z+d_{\alpha})^r (c_{\alpha} \overline{z} +d_{\alpha})^s }  f(\alpha z) $$
 and is  real-analytic modular of weights $(r,s)$. 
  \end{lem}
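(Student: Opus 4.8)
The plan is to translate the definition of $T_m$ from the vector-valued side, using the dictionary of \S\ref{vectvaluedandFrs}. Write $f$ as $\sum_{r+s=n} f_{r,s}(z)(X-zY)^r(X-\overline{z}Y)^s$, with each $f_{r,s}$ modular of weights $(r,s)$. The only genuine computation is to determine how the right action $\big|_\alpha$ transforms the linear forms $X-(\alpha z)Y$ and $X-\overline{\alpha z}\,Y$. For $\alpha$ as in (\ref{alphanotation}) with $\det\alpha=m$, using $(X,Y)\big|_\alpha=(a_\alpha X+b_\alpha Y,\,c_\alpha X+d_\alpha Y)$, the relation $\alpha z=(a_\alpha z+b_\alpha)/(c_\alpha z+d_\alpha)$, and the identity $a_\alpha d_\alpha-b_\alpha c_\alpha=m$, one finds
$$\bigl(X-(\alpha z)Y\bigr)\big|_\alpha=\bigl(a_\alpha-(\alpha z)c_\alpha\bigr)X+\bigl(b_\alpha-(\alpha z)d_\alpha\bigr)Y=\frac{m}{c_\alpha z+d_\alpha}\,(X-zY).$$
Since $\alpha$ has real (indeed integer) entries, $\overline{\alpha z}=\alpha\overline{z}$, so the same computation gives $\bigl(X-\overline{\alpha z}\,Y\bigr)\big|_\alpha=\tfrac{m}{c_\alpha\overline{z}+d_\alpha}(X-\overline{z}Y)$.

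Substituting these into $f(\alpha z)\big|_\alpha=\sum_{r+s=n}f_{r,s}(\alpha z)\bigl[(X-(\alpha z)Y)\big|_\alpha\bigr]^r\bigl[(X-\overline{\alpha z}\,Y)\big|_\alpha\bigr]^s$ collapses the $\big|_\alpha$-action to scalar automorphy factors:
$$f(\alpha z)\big|_\alpha=\sum_{r+s=n}\frac{m^{r+s}}{(c_\alpha z+d_\alpha)^r(c_\alpha\overline{z}+d_\alpha)^s}\,f_{r,s}(\alpha z)\,(X-zY)^r(X-\overline{z}Y)^s.$$
Summing over $\alpha\in\Gamma\backslash M_m$, dividing by $m$ as in the definition of $T_mf$, and reading off the coefficient of $(X-zY)^r(X-\overline{z}Y)^s$ gives exactly
$$(T_mf)_{r,s}=\sum_{\alpha\in\Gamma\backslash M_m}\frac{m^{r+s-1}}{(c_\alpha z+d_\alpha)^r(c_\alpha\overline{z}+d_\alpha)^s}\,f_{r,s}(\alpha z),$$
which is the asserted formula; the one point to watch is that the two automorphy factors contribute $m^{r+s}$, which together with the $m^{-1}$ in the definition of $T_m$ yields the exponent $r+s-1$.

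It remains to note the two regularity assertions. That $T_mf$ is modular of weights $(r,s)$ is immediate from the discussion preceding the lemma, where $T_mf$ was shown to be equivariant as a $V_n\otimes\C$-valued function; the dictionary of \S\ref{vectvaluedandFrs} then forces each component to be modular of the corresponding weights. Real-analyticity follows directly from the displayed formula: the sum is finite, $z\mapsto\alpha z$ is holomorphic on $\HH$ so each $f_{r,s}(\alpha z)$ is real analytic, and the prefactor is real analytic since $c_\alpha z+d_\alpha$ cannot vanish on $\HH$ — it would force $z=-d_\alpha/c_\alpha\in\R$ when $c_\alpha\neq0$, or $c_\alpha=d_\alpha=0$, contradicting $\det\alpha=m\neq0$. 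I do not anticipate any real obstacle: the entire content is the one-line automorphy-factor identity above together with the bookkeeping of the powers of $m$.
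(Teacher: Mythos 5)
Your proof is correct and follows essentially the same route as the paper: the key identity $(X-(\alpha z)Y)\big|_\alpha = \tfrac{\det\alpha}{c_\alpha z+d_\alpha}(X-zY)$, substitution into the vector-valued definition of $T_m$, and reading off coefficients, with the powers of $m$ bookkept in the same way. The added remarks on real-analyticity and on equivariance forcing modularity of the components are fine and consistent with the discussion preceding the lemma.
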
 
  \begin{proof}
 For any  $\alpha$ as in (\ref{alphanotation}), 
 $$(X - \alpha z Y) \big|_{\alpha} = {\det(\alpha)  \over (c_{\alpha} z+d_{\alpha})} (X-zY) \ .$$
 Writing  $f$ in the form 
 $f = \sum_{r+s=n} f_{r,s} (X-z Y)^r (X-\overline{z} Y)^s$,  we find that 
 $$T_m f = {1 \over m} \sum_{\alpha \in \Gamma \backslash M_m} \sum_{r+s=n} f_{r,s} (\alpha z) {\det(\alpha)^n  \over (c_{\alpha}z+d_{\alpha})^r (c_{\alpha} \overline{z} +d_{\alpha})^s } (X-zY)^r (X- \overline{z} Y)^s\ .$$
 Reading off the coefficients gives the stated formula. 
\end{proof}
\subsection{Properties}

\begin{lem}  \label{lemTmprop} View $T_m$, multiplication by $\Lef$, and $\partial, \overline{\partial}, \Delta$ as operators acting  on  real analytic modular functions. Then they  satisfy
$$ m \, T_m \, \Lef  =  \Lef \,  T_m\ ,$$
$$ [T_m ,\partial ] = [T_m, \overline{\partial}]=0\ .$$
The second equation implies that $ [ T_m , \Delta ]  =0$.
\end{lem}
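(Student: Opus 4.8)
The plan is to verify each of the three stated identities directly from the coset description $T_m f = \sum_{\alpha \in \Gamma \backslash M_m} m^{r+s-1}(c_\alpha z + d_\alpha)^{-r}(c_\alpha \overline z + d_\alpha)^{-s} f(\alpha z)$ of Lemma~\ref{lemTmf}, treating $T_m$ as an operator on the bigraded space $\M^!$. For the first identity, I would use that $\Lef = \log|q| = i\pi(z-\overline z)$ has modular weights $(-1,-1)$, so that multiplying $f$ of weights $(r,s)$ by $\Lef$ lowers both weights by one. Under the substitution $z \mapsto \alpha z$ one has the classical identity $\mathrm{Im}(\alpha z) = \det(\alpha)\,\mathrm{Im}(z)/|c_\alpha z + d_\alpha|^2 = m\,\mathrm{Im}(z)/|c_\alpha z+d_\alpha|^2$, hence $\Lef(\alpha z) = m\,\Lef(z)/\big((c_\alpha z+d_\alpha)(c_\alpha\overline z+d_\alpha)\big)$. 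Substituting this into the formula for $T_m(\Lef f)$ (using weights $(r-1,s-1)$ for $\Lef f$) produces exactly an extra factor $m$ and the correct powers of $(c_\alpha z+d_\alpha)$, $(c_\alpha\overline z+d_\alpha)$ to match $\Lef\cdot T_m f$, giving $\Lef\,T_m f = m\, T_m(\Lef f)$, which is the asserted relation $m\,T_m\,\Lef = \Lef\,T_m$.

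For the commutation with $\partial$ and $\overline\partial$, the cleanest route is to pass back to the vector-valued picture: if $F:\HH\to V_n\otimes\C$ is equivariant and corresponds to the family $(F_{r,s})$, then by \S\ref{vectvaluedandFrs} the operators $\partial,\overline\partial$ on the components are encoded by the exterior derivative $dF$, and $T_m$ is defined on $F$ by an averaging over $\Gamma\backslash M_m$ that is visibly compatible with pullback of differential forms. Concretely, I would observe that for each fixed $\alpha\in M_m$ the assignment $F \mapsto F(\alpha z)|_\alpha$ commutes with $d$ (naturality of the exterior derivative under the holomorphic change of variables $z\mapsto\alpha z$ combined with the constant linear action $|_\alpha$ on $V_n\otimes\C$), and $T_m$ is the sum over cosets of $m^{-1}$ times this assignment; hence $T_m$ commutes with $d$. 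Translating back through the dictionary $dF = \pi i\, f(z)(X-zY)^n dz + \pi i\,\overline{g(z)}(X-\overline zY)^n d\overline z \Leftrightarrow (\ref{partialFgeneral})$ shows $[T_m,\partial]=[T_m,\overline\partial]=0$ on all components; alternatively one checks this at the level of the explicit $q$-expansions using the description of $\Gamma\backslash M_m$ by upper-triangular representatives and the fact that $T_m$ acts on Fourier coefficients by the standard Hecke formula, which manifestly commutes with $\partial_r\Lef^k q^m\overline q^n = (2m\Lef + r+k)\Lef^k q^m\overline q^n$. Finally, $[T_m,\Delta]=0$ is formal: from $\Delta_{r,s} = -\overline\partial_{s-1}\partial_r + r(s-1)$ (see (\ref{Deltarsdef})), the scalar part $r(s-1)$ is unaffected by $T_m$ since $T_m$ preserves modular weights, and the operator part $\overline\partial\,\partial$ commutes with $T_m$ because each factor does.

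The main obstacle is purely bookkeeping: making sure the powers of $m$ and of the automorphy factors $(c_\alpha z+d_\alpha)^r(c_\alpha\overline z+d_\alpha)^s$ are tracked correctly in the first identity, where the shift in weights from $f$ (weights $(r,s)$) to $\Lef f$ (weights $(r-1,s-1)$) interacts with the $m^{r+s-1}$ prefactor in Lemma~\ref{lemTmf}. Everything else is a routine consequence of naturality of $d$ under the $\mathrm{GL}_2(\R)$-action and the fact that $T_m$ is, by construction, a weight-preserving average of pullback maps.
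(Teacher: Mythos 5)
Your proposal is correct, and the first identity is handled exactly as in the paper: both arguments rest on $\mathrm{Im}(\alpha z)=\det(\alpha)\,\mathrm{Im}(z)/|c_\alpha z+d_\alpha|^2$ together with the weight shift from $(r,s)$ to $(r-1,s-1)$, which produces the extra factor of $m$. For the commutators $[T_m,\partial]=[T_m,\overline{\partial}]=0$ you take a genuinely different, more conceptual route: the paper stays at the level of components and verifies the pointwise intertwining identity $\partial_r\bigl((c z+d)^{-r}f(\alpha z)\bigr)=(cz+d)^{-r-1}(c\overline{z}+d)(\partial_r f)(\alpha z)$ before summing over cosets, whereas you pass to the equivariant $V_n\otimes\C$-valued picture and invoke naturality of the exterior derivative under $F\mapsto F(\alpha z)|_{\alpha}$. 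Your argument is cleaner and makes the $\ssl_2$-equivariance of $T_m$ transparent, but note that it leans on the full strength of proposition 7.2 of the first paper in the series (the dictionary between $dF$ and \emph{all} of the $\partial F_{r,s}$, $\overline{\partial}F_{r,s}$ for an arbitrary equivariant $F$), of which the displayed equivalence $(\ref{dFgeneral})\Leftrightarrow(\ref{partialFgeneral})$ is only the special case where $dF$ has purely holomorphic and antiholomorphic top components; you should cite the general statement rather than the special case. One small caution on your parenthetical alternative: the commutation is not quite ``manifest'' on $q$-expansions, since $T_N$ rescales the Fourier index $m\mapsto ma/d$ and multiplies $\Lef^k$ by $(a/d)^k$, and one must check that these two effects conspire to reproduce the eigenvalue $2m\Lef+r+k$ of $\partial_r$ (they do, because the $\Lef^{k+1}$ term picks up $(a/d)^{k+1}$ against $2(ma/d)$); moreover $T_N$ only lands in $\mathcal{M}^{[N],!}$ in general, so that check takes place in the space with fractional powers of $q$. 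The deduction $[T_m,\Delta]=0$ from $(\ref{Deltarsdef})$ is formal in both treatments.
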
 
\begin{proof}
For any $\alpha$  as in (\ref{alphanotation}),
$$\mathrm{Im}(\alpha z) = { \det(\alpha) \over (c_{\alpha}z+d_{\alpha})(c_{\alpha}\overline{z}+d_{\alpha})} \mathrm{Im}(z)\ .$$
If $f$  is modular of weights $(r,s)$, then $\mathrm{Im}(z) f$ is modular of weights $(r-1,s-1)$ and 
\begin{eqnarray} 
T_m (\mathrm{Im}(z)  f)  &=  &  \sum_{\alpha \in \Gamma \backslash M_m}  { m^{r+s-3}  \over (c_{\alpha}z+d_{\alpha})^{r-1} (c_{\alpha}\overline{z}+d_{\alpha})^{s-1}  }\mathrm{Im} (\alpha z) f(\alpha z) \nonumber \\
&  = &  \mathrm{Im}(z)  \sum_{\alpha \in \Gamma \backslash M_m}  { m^{r+s-2}  \over (c_{\alpha}z+d_{\alpha})^{r} (c_{\alpha}\overline{z}+d_{\alpha})^{s} }f(\alpha z) 
   \quad =  \quad   {1 \over m }  \mathrm{Im}(z) T_m f(z) \ .\nonumber
\end{eqnarray}
The first equation follows from  $\Lef = - 2 \pi \mathrm{Im} (z)$. One verifies  for any $\alpha$ of the form (\ref{alphanotation}) (dropping the subscripts $\alpha$ for convenience):
$$\partial_r  \big( (cz+d)^{-r} f(\alpha z)\big) = (cz+d)^{-r-1} (c \overline{z} +d)  \big(\partial_r f \big)(\alpha z)$$
$$\overline{\partial}_s  \big( (c\overline{z}+d)^{-s} f(\alpha z)\big) = (cz+d) (c \overline{z} +d)^{-s-1}  \big(\overline{\partial}_s f \big)(\alpha z)$$
Since  $\partial_r f$ is modular of weights $(r+1, s-1)$ \begin{eqnarray} T_m (\partial_r f)  &=  & \sum_{\alpha \in \Gamma \backslash M_m} {m^{r+s-1}  \over (c_{\alpha}z+d_{\alpha})^{r+1}(c_{\alpha} \overline{z} +d_{\alpha})^{s-1}} (\partial_r f)(\alpha z)  \nonumber \\ 
&= &  \sum_{\alpha \in \Gamma \backslash M_m} {m^{r+s-1}  \over (c_{\alpha} \overline{z} +d_{\alpha})^{s}}  \partial_r  \Big( {f(\alpha z) \over (c_{\alpha}z+d_{\alpha})^{r}} \Big) \quad = \quad  \partial_r T_m (f)  \nonumber
\end{eqnarray}
which proves that $[T_m , \partial ] f =0$. The statement for  $\overline{\partial}$ follows by complex conjugation. The equation $[T_m ,\Delta] f=0$ follows from the definition of the Laplacian \S\ref{sectDiffop}. 
\end{proof} 
By \cite{Serre} \S5.2  Lemma 2,  a complete set of representatives for the set of cosets $\Gamma \backslash M_m$ are given by the $\sigma_1(m) =  \sum_{d|m} d $ integer matrices
$$ \begin{pmatrix} a & b \\ 0 & d \end{pmatrix} \quad \hbox{ where }  \quad ad=m \ , \  a\geq 1 \ , \  0 \leq b < d\ .$$
It follows from lemma \ref{lemTmf} that for any $f$ modular of weights $(r,s)$, we have
\begin{equation} \label{usualTmf} T_m f (z) =  m^{w-1} \sum_{ad=m, a,d>0} {1 \over d^w} \sum_{0\leq b <d} f \Big( { az+ b \over d} \Big)
\end{equation} 
where $w= r+s$ is the total weight of $f$, which is the usual formula. 
It follows that the $T_m$ commute and satisfy the following relations \cite{Serre} \S5.1: 
\begin{eqnarray} T_m T_n  &=  & T_{mn} \qquad \qquad \qquad \qquad  \hbox{ if } (m,n) \hbox{ coprime}  \nonumber \\ 
 T_p T_{p^{n}}   &=  & T_{p^{n+1}} + p^{w-1} T_{p^{n-1}}  \qquad \hbox{ if } p \hbox{ prime}, n\geq 1 \nonumber 
 \end{eqnarray}
viewed as operators acting on modular forms of total weight $w$. 

\subsection{$q$-expansions}
The Hecke operators do not preserve the spaces $\mathcal{M}$ and $\mathcal{M}^!$.  Indeed,  it follows from the definitions that the map $f (z)\mapsto f(\frac{az+b}{d})$ acts via
 $$ \Lef^k q^m \overline{q}^n\quad \mapsto \quad   \Big( \frac{a}{d}\Big)^k e^{ 2 \pi i (m -n) \frac{b}{d}}\,  \Lef^k q^{\frac{ma}{d}}   \overline{q}^{\frac{na}{d}} \ . $$ 
The following corollary is a consequence of  formula (\ref{usualTmf}) and continuity. 
 
\begin{cor}
Let $R \subset \C$. The Hecke operator $T_N$ defines a linear map 
$$T_N : \mathcal{M}^!(R) \To \mathcal{M}^{[N],!}(R[\zeta_N])$$ 
where $\zeta_N$ is a primitive $N$th root of unity, and $\mathcal{M}^{[N],!}(S)$ is the space of real analytic modular forms
which admit an expansion in
 $S [q^{-1/N},\overline{q}^{-1/N},  q^{1/N},\overline{q}^{1/N}]] [\Lef]$.
\end{cor}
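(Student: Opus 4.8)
The plan is to reduce the statement to the explicit formula~(\ref{usualTmf}) together with the monomial transformation rule displayed just above the corollary. Lemma~\ref{lemTmf} already guarantees that $T_N f$ is real analytic and modular of the same weights $(r,s)$ as $f$, so the only thing left to verify is the shape of its expansion. Using the coset representatives $\bigl(\begin{smallmatrix} a & b\\ 0 & d\end{smallmatrix}\bigr)$ with $ad=N$, $a\geq 1$, $0\leq b<d$, formula~(\ref{usualTmf}) exhibits $T_N f$ as a \emph{finite} $\Q$-linear combination of the translates $z\mapsto f\bigl(\tfrac{az+b}{d}\bigr)$, so it suffices to treat one such translate, for fixed $a,d,b$.

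Then I would substitute the expansion~(\ref{fexp}) of $f$ into $f\bigl(\tfrac{az+b}{d}\bigr)$ term by term. This is exactly where ``continuity'' is needed: $\HH$ is stable under $z\mapsto\tfrac{az+b}{d}$ and the series~(\ref{fexp}) converges locally uniformly on $\HH$, which licenses the term-by-term substitution and the rearrangement of the result into a single expansion. By the monomial rule, $\Lef^k q^m\overline q^{\,n}$ is sent to $(a/d)^k\, e^{2\pi i (m-n)b/d}\,\Lef^k q^{ma/d}\overline q^{\,na/d}$, and since $ad=N$ one has $a/d=a^2/N$; hence $ma/d,\, na/d\in\tfrac1N\Z$, so only the fractional powers $q^{1/N},\overline q^{\,1/N}$ occur, only integer powers of $\Lef$ occur (in the same finite range $|k|\leq M$ as for $f$), and the pole order remains finite because that of $f$ is. Each coefficient of the resulting expansion is a product of a rational scalar $(a/d)^k$, a power of $\zeta_N$ (because $d\mid N$), and an original coefficient $a^{(k)}_{m,n}\in R$, so it lies in $R[\zeta_N]$ once one notes that $\Q\subseteq R$ in the cases of interest. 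Summing the finitely many translates and invoking the modularity from Lemma~\ref{lemTmf} then yields $T_N f\in\mathcal{M}^{[N],!}(R[\zeta_N])$.

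I do not expect a genuine obstacle here: the argument is bookkeeping on top of~(\ref{usualTmf}). The two points that need a little care are the convergence/rearrangement justification (the ``continuity'' alluded to in the statement) and verifying that $q^{1/N}$, rather than some finer root, already suffices — which is precisely the observation $ad=N\Rightarrow a/d\in\tfrac1N\Z$. If one prefers not to assume $\Q\subseteq R$, the honest statement has target coefficient ring $R[\tfrac1N,\zeta_N]$; this is immaterial for the applications in this paper, where $R$ is a number field, $\overline{\Q}$, or $\C$.
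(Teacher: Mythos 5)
Your argument is correct and takes essentially the same route as the paper, whose entire proof is the one-line remark that the corollary is ``a consequence of formula (\ref{usualTmf}) and continuity'' together with the displayed monomial rule; your write-up simply supplies the bookkeeping ($ad=N$ forces $ma/d,na/d\in\tfrac{1}{N}\Z$, the exponential factor is a power of $\zeta_N$, and locally uniform convergence licenses term-by-term substitution) that the paper leaves implicit. Your observation that for arbitrary $R\subset\C$ the honest target is $R[\tfrac{1}{N},\zeta_N]$ because of the scalar $(a/d)^k$ is a fair minor sharpening.
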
 
It is well-known that  for $r \in \Z$, 
\begin{equation} \label{sumofexps} \sum_{0 \leq b <d }  e^{ 2\pi i r \frac{b}{d}} = \begin{cases} 0 & \hbox{ if  }  d\!\! \not| \, r \\ d & \hbox{ if } d|r \end{cases}\ . 
\end{equation}
\begin{cor} \label{corfHecke} Let  $f  \in \mathcal{M}^!_{r,s}$  with an expansion
\begin{equation}\label{corfexp} f = \sum  a^{(k)}_{m,n} \Lef^k q^m \overline{q}^n\end{equation}
satisfying the property that for all $d|N$, $d>1$, 
\begin{equation} \label{Heckecond} a^{(k)}_{m,n} = 0 \quad \hbox{ whenever } 0 \not\equiv m \equiv n \pmod{d}\ .
\end{equation}
Then $T_N f \in \mathcal{M}^!_{r,s}$. 
More precisely, one has the formula
\begin{equation} \label{corTNfexp} T_N f = \sum_{k,\mu,\nu} \, \alpha^{(k)}_{\mu, \nu}  \, \Lef^k q^{\mu} \overline{q}^{\nu}
\end{equation}
where
 $$   \alpha^{(k)}_{\mu, \nu} =  \sum_{a |(N, \mu, \nu), a\geq 1}  a^{w-1}\Big(\frac{a^2}{N}\Big)^k a^{(k)}_{\frac{\mu N}{a^2}, \frac{\nu N}{a^2}}  \ .
$$
In particular, if $f \in \mathcal{M}_{r,s}$, and satisfies (\ref{Heckecond}), then $T_N f \in \mathcal{M}_{r,s}$. 
\end{cor}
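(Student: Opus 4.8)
The plan is to apply $T_N$ term by term to the expansion $f=\sum a^{(k)}_{m,n}\Lef^k q^m\overline{q}^n$ and read off the result. Using formula (\ref{usualTmf}) together with the action of $f(z)\mapsto f(\frac{az+b}{d})$ on monomials recorded just before the corollary, namely $\Lef^k q^m\overline{q}^n\mapsto (a/d)^k e^{2\pi i(m-n)b/d}\Lef^k q^{ma/d}\overline{q}^{na/d}$, the inner sum over $0\le b<d$ collapses by (\ref{sumofexps}) to $d$ when $d\mid(m-n)$ and to $0$ otherwise. Substituting $d=N/a$ and collecting the prefactor $N^{w-1}d^{-w}\cdot d=a^{w-1}$, with $w=r+s$, this yields
$$T_N(\Lef^k q^m\overline{q}^n)=\sum_{\substack{a\mid N\\ (N/a)\mid(m-n)}} a^{w-1}\Big(\frac{a^2}{N}\Big)^k\Lef^k q^{ma^2/N}\overline{q}^{na^2/N}.$$

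Next I would check that all exponents appearing are integers whenever the term actually contributes, i.e.\ whenever $a^{(k)}_{m,n}\neq 0$ and $(N/a)\mid(m-n)$. Set $d=N/a$. If $d>1$, then $d\mid(m-n)$ and hypothesis (\ref{Heckecond}) force $d\mid m$ (otherwise $a^{(k)}_{m,n}=0$), hence $d\mid n$ as well, so $ma^2/N=(m/d)a$ and $na^2/N=(n/d)a$ lie in $\Z$; if $d=1$ then $a=N$ and the exponents are $mN,nN\in\Z$. Since the exponents also remain bounded below, and since $T_N f$ is already known to be real analytic and modular of weights $(r,s)$ by lemma \ref{lemTmf}, this gives $T_N f\in\mathcal{M}^!_{r,s}$. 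The passage from the monomialwise identity to $T_N f$, and the freedom to collect coefficients, is supplied by the absolute convergence of the expansion (the ``continuity'' already invoked in the preceding corollary).

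For the explicit formula, I would collect, over all monomials of $f$ and all $a\mid N$, the contributions with $ma^2/N=\mu$ and $na^2/N=\nu$; these force $m=\mu N/a^2$, $n=\nu N/a^2$. The same use of (\ref{Heckecond}) shows a nonzero contribution requires $a\mid\mu$ and $a\mid\nu$ (take $d=N/a$ and argue as above), while conversely every $a\mid(N,\mu,\nu)$ produces integral $m,n$ with $(N/a)\mid(m-n)$. Hence the index set is exactly $\{a\ge 1:a\mid(N,\mu,\nu)\}$ and the coefficient is $\alpha^{(k)}_{\mu,\nu}=\sum_{a\mid(N,\mu,\nu)}a^{w-1}(a^2/N)^k a^{(k)}_{\mu N/a^2,\nu N/a^2}$, which is (\ref{corTNfexp}). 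Finally, if $f\in\mathcal{M}_{r,s}$ then every $m,n\ge 0$, so $ma^2/N,na^2/N\ge 0$ and no negative powers of $q,\overline{q}$ occur, giving $T_N f\in\mathcal{M}_{r,s}$.

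The only genuine obstacle is the bookkeeping: holding the two constraints $(N/a)\mid(m-n)$ and $a^{(k)}_{m,n}\neq 0$ together and seeing that, through (\ref{Heckecond}), they pin down the divisibilities $a\mid\mu$, $a\mid\nu$ needed to match the stated index set. Everything else is the substitution $d=N/a$ and the geometric-sum identity (\ref{sumofexps}).
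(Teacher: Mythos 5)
Your proof is correct and follows essentially the same route as the paper: apply $T_N$ termwise via (\ref{usualTmf}), collapse the sum over $b$ with (\ref{sumofexps}) under hypothesis (\ref{Heckecond}), substitute $d=N/a$, and collect coefficients of $q^{\mu}\overline{q}^{\nu}$. You supply slightly more detail than the paper on the integrality of the exponents and on why the index set is exactly $\{a : a\mid(N,\mu,\nu)\}$, but the argument is the same.
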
 
\begin{proof}
Apply  $T_N$ to the expansion of $f$ via formula (\ref{usualTmf})  to deduce  that
$$T_N f = \sum_{k,m,n} N^{w-1} \sum_{ad=N, a,d>0} \Big(\frac{a}{d}\Big)^k \frac{1}{d^w} \sum_{0\leq b <d} a^{(k)}_{m,n} e^{ 2\pi i (m-n)\frac{b}{d}} \Lef^k q^{\frac{ma}{d}} \overline{q}^{\frac{na}{d}}\ .$$
With the assumption (\ref{Heckecond}), this reduces  using (\ref{sumofexps}) to 
$$T_N f = \sum_{k,m,n} \,  \sum_{ad=N, a,d>0} \Big(\frac{a}{d}\Big)^k  a^{w-1}   a^{(k)}_{m,n}  \Lef^k q^{\frac{ma}{d}} \overline{q}^{\frac{na}{d}}\ .$$
Replace $m$, $n$ with $m'= m/d$ and $n'=n/d$ to obtain
$$T_N f = \sum_{k,m',n'} \,  \sum_{ad=N, a,d>0} \Big(\frac{a}{d}\Big)^k  a^{w-1}   a^{(k)}_{m'\!d,n'\!d}  \Lef^k q^{\,m'\!a} \overline{q}^{\,n'\!a}\ .$$
Comparing with (\ref{corTNfexp}) and collecting terms in $q^{\mu}\overline{q}^{\nu}$ gives
$$   \alpha^{(k)}_{\mu, \nu} =  \sum_{a |(N, \mu, \nu), a\geq 1}  a^{w-1}\Big(\frac{a}{d}\Big)^k a^{(k)}_{\frac{\mu d}{a}, \frac{\nu d}{a}}  \ .
$$
where in the sum, $d$ denotes $N/a$. 
\end{proof}
Condition (\ref{Heckecond})  holds in particular if  $a^{(k)}_{m,n}=0$  for all $mn\neq 0$. 
\begin{cor} The Hecke algebra acts on $\HM^!$.
\end{cor}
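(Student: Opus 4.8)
The plan is to prove that each Hecke operator $T_N$, $N\geq 1$, maps $\HM^!$ into itself. Since the $T_N$ generate the Hecke algebra and, as established in \S\ref{sectHecke}, satisfy the standard relations $T_mT_n=T_{mn}$ for $(m,n)=1$ and $T_pT_{p^a}=T_{p^{a+1}}+p^{w-1}T_{p^{a-1}}$ as operators on real analytic modular forms of total weight $w$, this is enough. Because $\Delta$ has bidegree $(0,0)$ and $\HM^!=\bigoplus_{n\in\Z}\HM^!(n)$, it suffices to treat a single homogeneous component $F\in\HM^!(\lambda)\cap\M^!_{r,s}$ at a time, for some $\lambda,r,s\in\Z$.

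The crucial point is that such an $F$ has vanishing ``mixed'' Fourier coefficients. Indeed, by the structure of $\HM^!(\lambda)$ recalled in \S\ref{sectHM}, $F$ decomposes uniquely as $F=F^h+F^0+F^a$ with $F^h\in\C[q^{-1},q]][\Lef^{\pm}]$, $F^0\in\C[\Lef^{\pm}]$ and $F^a\in\C[\overline{q}^{-1},\overline{q}]][\Lef^{\pm}]$; hence in the expansion $F=\sum a^{(k)}_{m,n}\Lef^k q^m\overline{q}^n$ one has $a^{(k)}_{m,n}=0$ whenever $mn\neq 0$. This is precisely the situation in which hypothesis (\ref{Heckecond}) of corollary \ref{corfHecke} is satisfied for every $N$, so that corollary yields $T_N F\in\M^!_{r,s}$: no fractional powers of $q$ survive the Hecke double coset sum. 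Combining this with lemma \ref{lemTmprop}, which gives $[T_N,\Delta]=0$, we obtain $\Delta(T_N F)=T_N(\Delta F)=\lambda\,T_N F$, so that $T_N F\in\HM^!(\lambda)\cap\M^!_{r,s}\subset\HM^!$. Summing over the finitely many components of an arbitrary element shows $T_N\,\HM^!\subset\HM^!$, and the relations recalled above then exhibit $\HM^!$ as a module over the Hecke algebra.

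The only step with genuine content is the stability $T_N\,\HM^!\subset\M^!$: a priori $T_N$ lands only in the larger space of modular forms admitting expansions in fractional powers $q^{1/N},\overline{q}^{1/N}$, and one must verify that the roots of unity $e^{2\pi i(m-n)b/d}$ occurring in formula (\ref{usualTmf}) conspire to cancel. This cancellation is exactly what corollary \ref{corfHecke} records, and its hypothesis holds here because harmonic elements of $\M^!$ have $a^{(k)}_{m,n}=0$ for $mn\neq 0$ by the decomposition $F=F^h+F^0+F^a$. Thus no work beyond invoking corollary \ref{corfHecke} and lemma \ref{lemTmprop} is required.
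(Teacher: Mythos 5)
Your argument is correct and is precisely the one the paper intends: the corollary is stated immediately after the observation that condition (\ref{Heckecond}) holds whenever $a^{(k)}_{m,n}=0$ for $mn\neq 0$, which is exactly the vanishing you extract from the decomposition $F=F^h+F^0+F^a$ of Laplace eigenfunctions, and the eigenvalue is preserved by $[T_N,\Delta]=0$ from lemma \ref{lemTmprop}. Nothing to add.
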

 If $f  = f^a + f^0 +  f^h$ as in (\ref{Fdecompah0})
then $(T_N f )^{\bullet}= T_N (f^{\bullet})$ for $\bullet \in \{a, 0, h\}$.  It follows from the  formula that if $f^{\bullet}$ has a pole of order at most $p$
at the cusp, then $T_N f^{\bullet}$ has a pole of order at most $Np$ at the cusp, for $\bullet = a, h$.

\begin{cor} Let $f$ be as in corollary \ref{corfHecke}. Let $w=r+s$. Then
\begin{equation} \label{TNf0}   \alpha^{(k)}_{0,0} =     \sigma_{2k+w-1}(N) N^{-k}   \, a^{(k)}_{0,0}      \ . 
\end{equation} 
\end{cor}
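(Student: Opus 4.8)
The plan is to deduce the formula directly by specialising the general $q$-expansion formula for $T_N f$ established in Corollary \ref{corfHecke}. Since $f$ satisfies condition (\ref{Heckecond}), that corollary applies and gives $T_N f = \sum_{k,\mu,\nu} \alpha^{(k)}_{\mu,\nu} \Lef^k q^{\mu}\overline{q}^{\nu}$ with
$$
\alpha^{(k)}_{\mu,\nu} = \sum_{a \mid (N,\mu,\nu),\ a\geq 1} a^{w-1}\Big(\frac{a^2}{N}\Big)^k a^{(k)}_{\frac{\mu N}{a^2},\frac{\nu N}{a^2}}\ .
$$
First I would set $\mu=\nu=0$. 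The constraint $a\mid (N,0,0)$ then reduces to $a\mid N$, and both subscripts $\frac{\mu N}{a^2}$ and $\frac{\nu N}{a^2}$ become $0$, so the coefficient appearing in every term of the sum is simply the fixed constant $a^{(k)}_{0,0}$.

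Next I would pull $a^{(k)}_{0,0}$ and the factor $N^{-k}$ out of the sum and combine the remaining powers of $a$:
$$
\alpha^{(k)}_{0,0} = a^{(k)}_{0,0}\, N^{-k} \sum_{a\mid N} a^{w-1} a^{2k} = a^{(k)}_{0,0}\, N^{-k} \sum_{a\mid N} a^{2k+w-1}\ .
$$
Recognising the divisor sum $\sum_{a\mid N} a^{2k+w-1} = \sigma_{2k+w-1}(N)$ then yields $\alpha^{(k)}_{0,0} = \sigma_{2k+w-1}(N)\, N^{-k}\, a^{(k)}_{0,0}$, which is the claimed identity.

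There is no real obstacle here: the entire content is already packaged in Corollary \ref{corfHecke}, and the only thing to be careful about is the arithmetic of the exponents, namely that the factor $\big(\tfrac{a^2}{N}\big)^k = a^{2k} N^{-k}$ combines with $a^{w-1}$ to produce the exponent $2k+w-1$ in the generalised divisor function. One should also note, for completeness, that the hypotheses are consistent: since $f$ has modular weights $(r,s)$ with total weight $w=r+s$, the $w$ appearing in the formula of Corollary \ref{corfHecke} is exactly this total weight, so the substitution is legitimate and the statement follows immediately.
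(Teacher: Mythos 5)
Your proposal is correct and is exactly the intended argument: the paper states this as an immediate consequence of Corollary \ref{corfHecke} and gives no separate proof, and specialising $\mu=\nu=0$ in the formula for $\alpha^{(k)}_{\mu,\nu}$ (so that $a\mid(N,0,0)$ becomes $a\mid N$ and the subscripts collapse to $(0,0)$) and combining $a^{w-1}(a^2/N)^k$ into $N^{-k}a^{2k+w-1}$ is precisely what is needed. Nothing is missing.
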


\begin{cor}
Let $N=p$ be prime. Then for all $k, \mu, \nu$, 
$$\alpha^{(k)}_{\mu, \nu}  =   p^{-k}  a^{(k)}_{\mu p ,\nu p }  + p^{w+k-1}   a^{(k)}_{\mu/p ,\nu/p }$$
where the second term arises only if $p$ divides $\mu$ and $\nu$, and is absent otherwise.
\end{cor}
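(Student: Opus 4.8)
The plan is to specialise the general formula for $\alpha^{(k)}_{\mu,\nu}$ from corollary \ref{corfHecke} to the case $N=p$ prime. First I would recall that corollary expresses
$$ \alpha^{(k)}_{\mu,\nu} = \sum_{a\mid(N,\mu,\nu),\, a\geq 1} a^{w-1}\Big(\frac{a^2}{N}\Big)^k a^{(k)}_{\frac{\mu N}{a^2},\frac{\nu N}{a^2}}\ . $$
When $N=p$ is prime, the positive common divisors $a$ of $(p,\mu,\nu)$ are exactly $a=1$ (which always occurs) and $a=p$ (which occurs precisely when $p\mid\mu$ and $p\mid\nu$, since $a\mid p$ forces $a\in\{1,p\}$). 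I would then just evaluate the two corresponding summands: the $a=1$ term contributes $1^{w-1}(1/p)^k a^{(k)}_{\mu p,\nu p}=p^{-k}a^{(k)}_{\mu p,\nu p}$, and, when it appears, the $a=p$ term contributes $p^{w-1}(p^2/p)^k a^{(k)}_{\mu/p,\nu/p}=p^{w-1}p^{k}a^{(k)}_{\mu/p,\nu/p}=p^{w+k-1}a^{(k)}_{\mu/p,\nu/p}$. Summing gives the claimed expression, with the second term present only when $p\mid\mu$ and $p\mid\nu$.

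The only point requiring a word of care is the applicability of corollary \ref{corfHecke} itself, i.e.\ that hypothesis (\ref{Heckecond}) holds so that the displayed formula for $T_p f$ is valid; but since the statement is phrased entirely in terms of the coefficients $\alpha^{(k)}_{\mu,\nu}$ defined by that corollary, this is already built in. There is essentially no obstacle here: the result is a one-line unwinding of divisor sums, and the "hard part", such as it is, is merely bookkeeping the exponent $w+k-1$ correctly from $p^{w-1}\cdot (a^2/N)^k$ with $a=p$, $N=p$.
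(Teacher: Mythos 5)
Your proof is correct and is exactly the intended argument: the paper states this as an immediate corollary of the formula in Corollary \ref{corfHecke} and gives no separate proof, so specialising the divisor sum to $a\in\{1,p\}$ and evaluating the two terms is precisely what is meant. The exponent bookkeeping ($p^{w-1}\cdot p^{k}=p^{w+k-1}$ for $a=p$, and $p^{-k}$ for $a=1$) checks out.
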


The space of almost weakly holomorphic modular forms $M^![\GE^*_2, \Lef^{\pm}]$
 are harmonic, and are preserved by the Hecke operators. 
\begin{example} 
The modified Eisenstein series 
$\GE^*_2 =  \GE_2 - {1 \over 4 \Lef} $ 
is modular of weights $(2,0)$ and lies in $\mathcal{M}_{2,0}$, where  
$$\GE_2 (q) = - \frac{1}{24} + \sum_{n=1}^{\infty} \sigma_1(n) q^n = - \frac{1}{24} + q +3 q^2 + 4q^3 + 7 q^4 + \ldots$$
By formula (\ref{TNf0}) with $w=2$, we find that $T_n(\Lef^{-1}) = n^{-1} \sigma_{-1}(n) \Lef^{-1} = \sigma_1(n) \Lef^{-1}$, and hence $\GE_2^*$ is a Hecke eigenform.   For all $n\geq 1$,
$$T_n \GE^*_2 = \sigma_1(n) \GE^*_2\ .$$
\end{example}

\begin{rem}
The quotient $\HM^! / \partial (\HM^!)$ also admits an  action of the Hecke algebra. 
\end{rem}

\subsection{Hecke operators on weakly holomorphic modular forms} 
Let $f\in M_{k+2}^!$ be a weak Hecke eigenform. Then for all $m$,
$$ (T_m - \lambda_m) f    = \psi_m $$
for some $\lambda_m$, where $\psi_m$ is  a weakly holomorphic modular form  
$$\psi_m  \quad \in \quad D^{k+1} M^!_{-k}  \ .$$
Since the operators $T_m, T_n$ commute, they satisfy 
\begin{equation} \label{TmandTnonpsi} (T_m -\lambda_m )\psi_n = (T_n - \lambda_n) \psi_m\end{equation} 
for all $m,n$.  For all $(m,n)$ coprime:
$$\psi_{mn} = \lambda_n \psi_m + T_m\, \psi_n$$
and for all $p$ prime and $n\geq 1$,  
$$\psi_{p^{n+1}} = T_p \psi_{p^n}  - p^{k+1} \psi_{p^{n-1}} + \lambda_{p^n} \psi_p \ .$$
 
 \subsection{Hecke action on modular primitives} \label{sectHeckeaction}
 Let $f, g, X_{r,s}$ be as in proposition \ref{propweaklift}.

\begin{prop}  \label{propHeckeonXrs}
 $f$ is a weak Hecke eigenform with eigenvalues $\lambda_m$ if and only if  $g$ is a weak Hecke eigenform with eigenvalues $\lambda_m$. In this case,
\begin{equation} \label{TmonXrs} \Big( T_m - {\lambda_m \over m} \Big) X_{r,s} = \frac{1}{m}\Lef^{-n} \Big(  \frac{\partial^r}{r!} \psi_m +  \frac{\overline{\partial}^s}{s!}  \overline{\phi_m} \Big) \end{equation}
for some weakly holomorphic functions $\psi_m, \phi_m \in M^!_{-n}$ satisfying
\begin{eqnarray} \label{psiandphi}
(T_m - \lambda_m ) f  &=  &    \frac{1}{n!} \, \Lef^{-n-1} \partial^{n+1} \psi_m  \\
(T_m - \lambda_m ) g  &=  &    \frac{1}{n!} \, \Lef^{-n-1} \partial^{n+1} \phi_m \ . \nonumber 
\end{eqnarray} 
\end{prop}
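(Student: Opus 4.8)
The plan is to derive everything from the single relation $\partial X_{n,0}=\Lef f$ together with the commutation rules of lemma~\ref{lemTmprop}, and to deduce the statements involving $g$ from the symmetry between $f$ and $g$ recorded in the corollary to proposition~\ref{propweaklift} (the system $Y_{r,s}=\overline{X_{s,r}}$ is again a solution system, with $f$ and $g$ interchanged). First I would record the preliminaries. By the explicit shape~(\ref{Xrsexplicitformula}), every monomial $\Lef^k q^{\mu}\overline{q}^{\nu}$ occurring in $X_{r,s}$ has $\mu\nu=0$, so hypothesis~(\ref{Heckecond}) holds vacuously and corollary~\ref{corfHecke} gives $T_m X_{r,s}\in\mathcal{M}^!_{r,s}$; moreover $[T_m,\partial]=[T_m,\overline{\partial}]=0$ and $T_m(\Lef\,\cdot\,)=\tfrac1m\Lef\,T_m$ by lemma~\ref{lemTmprop}; and the $\lambda_m$ are real, being the Hecke eigenvalues of a cusp form or of an Eisenstein series.

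Assume $f$ is a weak Hecke eigenform with eigenvalues $\lambda_m$, and set $\Psi_m=(T_m-\lambda_m)f\in D^{n+1}M^!_{-n}$. Since $\partial$ commutes with $\Lef$, identity~(\ref{EquationforBol}) shows $h\mapsto\tfrac1{n!}\Lef^{-n-1}\partial^{n+1}h$ is a nonzero scalar multiple of $D^{n+1}$ on $M^!_{-n}$, so I may choose $\psi_m\in M^!_{-n}$ with $\Psi_m=\tfrac1{n!}\Lef^{-n-1}\partial^{n+1}\psi_m$. Using $[T_m,\partial]=0$ and $T_m(\Lef\,\cdot\,)=\tfrac1m\Lef\,T_m$,
$$\partial\big[\big(T_m-\tfrac{\lambda_m}{m}\big)X_{n,0}\big]=\tfrac1m\Lef\,(T_mf-\lambda_mf)=\tfrac1m\Lef\,\Psi_m=\partial\big[\tfrac{1}{m\,n!}\Lef^{-n}\partial^{n}\psi_m\big].$$
Hence $\big(T_m-\tfrac{\lambda_m}{m}\big)X_{n,0}-\tfrac{1}{m\,n!}\Lef^{-n}\partial^n\psi_m$ lies in $\mathcal{M}^!_{n,0}\cap\ker\partial_n=\Lef^{-n}\overline{M^!_{-n}}$ by lemma~\ref{lemker}; write it as $\Lef^{-n}\overline{\chi_m}$ with $\chi_m\in M^!_{-n}$ and put $\phi_m:=m\chi_m$.

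Next I would apply $\tfrac{\overline{\partial}^s}{s!}$, using $X_{r,s}=\tfrac{\overline{\partial}^s}{s!}X_{n,0}$ from~(\ref{Xrsdef}) together with the identity $\overline{\partial}^s\partial^n\psi_m=\tfrac{n!\,s!}{r!}\,\partial^r\psi_m$ (which holds because $\overline{\partial}\psi_m=0$ and $\psi_m$ has modular weights $(-n,0)$, so it follows by iterating the conjugate of lemma~\ref{lempartialiterate}) and $\overline{\partial}^s\overline{\chi_m}=\overline{\partial^s\chi_m}$. This gives exactly
$$\big(T_m-\tfrac{\lambda_m}{m}\big)X_{r,s}=\tfrac1m\Lef^{-n}\Big(\tfrac{\partial^r}{r!}\psi_m+\tfrac{\overline{\partial}^s}{s!}\overline{\phi_m}\Big),$$
which is~(\ref{TmonXrs}). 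For~(\ref{psiandphi}) I would take $s=n$: since $\overline{\partial}^n\partial^n\psi_m=(n!)^2\psi_m$ and $\overline{\partial}\psi_m=0$, this reads $\big(T_m-\tfrac{\lambda_m}{m}\big)X_{0,n}=\tfrac1m\Lef^{-n}\psi_m+\tfrac1{n!}\Lef^{-n}\overline{\partial^n\chi_m}$; applying $\overline{\partial}$, the left side is $\big(T_m-\tfrac{\lambda_m}{m}\big)(\Lef\overline{g})=\tfrac1m\Lef\,\overline{(T_m-\lambda_m)g}$ (using $\overline{\lambda_m}=\lambda_m$), while the right side is $\tfrac1{n!}\Lef^{-n}\overline{\partial^{n+1}\chi_m}$. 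Comparing gives $(T_m-\lambda_m)g=\tfrac1{n!}\Lef^{-n-1}\partial^{n+1}\phi_m$, and in particular $(T_m-\lambda_m)g\in D^{n+1}M^!_{-n}$, so $g$ is a weak Hecke eigenform with the same $\lambda_m$. Running this argument with $X_{r,s}$ replaced by $Y_{r,s}=\overline{X_{s,r}}$ (which swaps $f$ and $g$) gives the reverse implication and completes the equivalence. The step I expect to cost the most care is not conceptual but bookkeeping: tracking the Bol normalisation relating $D^{n+1}$ to $\Lef^{-n-1}\partial^{n+1}$, the factorials $\tfrac{n!\,s!}{r!}$ coming from the $\ssl_2$-action, and checking at each stage that the modular weights line up so lemma~\ref{lemker} applies.
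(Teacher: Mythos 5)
Your proof is correct and follows essentially the same route as the paper: produce $\psi_m$ from the Bol identity, use $[T_m,\partial]=0$ and lemma \ref{lemker} to pin down $(T_m-\lambda_m/m)X_{n,0}$ up to an antiholomorphic term defining $\phi_m$, propagate with $\overline{\partial}^s/s!$ via the conjugate of lemma \ref{lempartialiterate}, and read off the equation for $g$ by applying $\overline{\partial}$ at $(r,s)=(0,n)$, with the converse by complex conjugation. Your extra bookkeeping (checking $T_mX_{r,s}\in\mathcal{M}^!_{r,s}$ via corollary \ref{corfHecke} and noting $\overline{\lambda_m}=\lambda_m$) only makes explicit what the paper leaves implicit.
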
 
 
 \begin{proof}
 Suppose that $f$ is a weak Hecke eigenform with eigenvalues $\lambda_m$. Therefore, by (\ref{IDforBol}), there exist
 for every $m\geq 1$ a $\psi_m \in M^!_{-n}$ such that
  $$ (T_m - \lambda_m ) f =    \frac{1}{n!} \, \Lef^{-n-1} \partial^{n+1} \psi_m\  .$$
 Since $\partial X_{n,0} = \Lef f$, it follows from  $[T_m , \partial]=0$ (lemma \ref{lemTmprop}) that
 $$\partial \Big(T_m  - \frac{\lambda_m}{m}\Big) X_{n,0} =  \frac{1}{m n!}  \LL^{-n}   \partial^{n+1} \psi_m  $$
Hence by lemma \ref{lemker} there exists a $\phi_m \in M^!_{-n}$ such that 
  \begin{equation}\label{inproofTmXn0} \Big(T_m  - \frac{\lambda_m}{m}\Big) X_{n,0} =  \frac{1}{m}\LL^{-n} \Big( \frac{ \partial^{n}}{n!}   \psi_m+ \overline{\phi_m}  \Big)\ . \end{equation}
  This proves the case $(r,s)=(n,0)$ of $(\ref{TmonXrs})$. By taking the complex conjugate of lemma \ref{lempartialiterate} we find, using the fact that  $\overline{\partial} \psi_m=0$, that
  $$\overline{\partial}  \partial^k   \psi_m  =   \overline{\partial}  \partial^k   \psi_m - \partial^k  \overline{\partial} \psi_m  =  k (n-k+1)   \partial^{k-1} \psi_m\ .$$
By induction on $s$, this in turn implies that
$$\frac{\overline{\partial}^s}{s!} \frac{\partial^n}{n!}\psi_m = \frac{\partial^{n-s}}{(n-s)!} \psi_m\ .$$
From the definition $X_{r,s} =\frac{\overline{\partial}^s }{s!} X_{n,0}$, we apply $\frac{\overline{\partial}^s}{s!}$ to (\ref{inproofTmXn0}) and use  $[T_m , \overline{\partial}]=0$ (lemma \ref{lemTmprop}) and the previous equation  to deduce that 
 $$\Big(T_m  - \frac{\lambda_m}{m}\Big) X_{r,s} = \frac{1}{m} \LL^{-n} \Big( \frac{ \partial^{r}}{r!}   \psi_m+  \frac{\overline{\partial}^s }{s!}\overline{\phi_m}  \Big)\  \ . $$
This proves $(\ref{TmonXrs})$. Now apply $\overline{\partial}$ to this expression in the case $(r,s)=(0,n)$. We find, since $\overline{\partial} X_{0,n} =\Lef \overline{g}$  and $\overline{\partial} \psi_m=0$ that,
$$ \Big(T_m -\frac{\lambda_m}{m} \Big) \Lef \overline{g} = \frac{1}{m}\Lef^{-n} \frac{\overline{\partial}^{n+1} }{n!} \overline{\phi_m}$$
which is equivalent by lemma \ref{lemTmprop} to the second line of $(\ref{psiandphi})$.  By (\ref{IDforBol}),  $g$ is a weak Hecke eigenform with eigenvalues $\lambda_m$, and completes the proof.  The converse result, where we assume that $g$ is a weak Hecke eigenform and deduce the same for $f$, holds by complex conjugation. \end{proof}

\begin{rem} Remark \ref{remXassection}  
  implies an equality on the Betti image  under $\mathrm{comp}_{B,dR}$ of the de Rham cohomology classes in $H^1_{dR}(\mathcal{M}_{1,1}(\C); \V_n)$:
$$[  2\pi i  f(z) (X- z Y)^{n} dz]  =  \overline{[ 2\pi i  \, g(z) (X-  z  Y)^n d z ]}\ .$$ 
Since the Hecke operators act on cohomology, it follows that $f$ is a weak Hecke eigenform if and only if $g$ is, and that they have the same eigenvalues.
Incidentally, this argument also proves that $g = \s(f)$.  \end{rem}

 \subsection{Determination of  the coefficient of $\Lef^{-n}$} \label{sectalpha}
 
 \begin{cor} Let $f , g \in   M^!_{n+2}$  and $X_{r,s}$ be as in the previous proposition. 
Then  $$X^0_{r,s}  =  \alpha (-1)^{r}  \binom{n}{r} \Lef^{-n}  \ , $$
 where the constant $\alpha \in \C$ satisfies for all $m \geq 1$
 $$  (\sigma_{n+1}(m) - \lambda_m ) \, \alpha =   a_{0}(\psi_m) +  \overline{a_0(\phi_m)}  $$
  where $a_0$ denotes the zeroth Fourier coefficient.
  
   If $f$ is cuspidal,     $ \sigma_{n+1}(m)-\lambda_m \neq 0$ for all $m$ sufficiently large, in which case
  \begin{equation} \label{alphaformula} \alpha = \frac{  a_{0}(\psi_m) +  \overline{a_0(\phi_m)}  }{  \sigma_{n+1}(m) - \lambda_m }  \ .
  \end{equation}
 
    \end{cor}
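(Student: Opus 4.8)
The plan is to read the statement off from the explicit shape (\ref{Xrsexplicitformula}) of $X_{r,s}$ together with the Hecke identity (\ref{TmonXrs}) of Proposition~\ref{propHeckeonXrs}. First I would observe that in (\ref{Xrsexplicitformula}) the terms $R_{r,s}(f)$ and $\overline{R_{s,r}(g)}$ contribute nothing to the constant part, since by (\ref{gkdefn}) the functions $f^{(k)}$ and $g^{(k)}$ only involve nonzero Fourier modes. Hence $X^0_{r,s} = \tfrac{a}{n+1}\Lef + \alpha\,(-1)^r\binom{n}{r}\Lef^{-n}$ with $a=a_0(f)$, which reduces to the asserted form precisely when $f$ (hence $g$, whose zeroth coefficient is conjugate to that of $f$) is cuspidal, i.e.\ $a=0$.

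Next I would apply $T_m-\tfrac{\lambda_m}{m}$ to (\ref{TmonXrs}) and pass to constant parts on each side. Since $X_{r,s}\in\HM^!(-n)$, it decomposes as $X^h_{r,s}+X^0_{r,s}+X^a_{r,s}$, and the holomorphic and antiholomorphic pieces, having no $q^0\overline{q}^0$-term, remain free of $q^0\overline{q}^0$-terms under $T_m$; thus the constant part of $T_mX_{r,s}$ is $T_m X^0_{r,s}$, in accordance with the identity $(T_Nf)^\bullet=T_N(f^\bullet)$. By (\ref{TNf0}) with total weight $w=n$, the operator $T_m$ multiplies $\Lef$ by $\sigma_{n+1}(m)/m$ and $\Lef^{-n}$ by $\sigma_{-n-1}(m)m^{n}=\sigma_{n+1}(m)/m$ --- the \emph{same} scalar --- so $T_mX^0_{r,s}=\tfrac{\sigma_{n+1}(m)}{m}X^0_{r,s}$, and the constant part of the left-hand side of (\ref{TmonXrs}) is $\tfrac1m\bigl(\sigma_{n+1}(m)-\lambda_m\bigr)X^0_{r,s}$.

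On the right-hand side I would compute the constant part of $\Lef^{-n}\bigl(\tfrac{\partial^r}{r!}\psi_m+\tfrac{\overline{\partial}^s}{s!}\overline{\phi_m}\bigr)$ from (\ref{partialraction}): applied to the $q^0$-mode $a_0(\psi_m)$ of $\psi_m\in M^!_{-n}$, each successive $\partial$ multiplies by the current (decreasing) modular weight, giving constant part $(-1)^r\binom{n}{r}a_0(\psi_m)$ for $\tfrac{\partial^r}{r!}\psi_m$, and likewise $(-1)^s\binom{n}{r}\overline{a_0(\phi_m)}$ for $\tfrac{\overline{\partial}^s}{s!}\overline{\phi_m}$; since $n$ is even the two signs agree. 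Hence the constant part of the right-hand side is $\tfrac1m(-1)^r\binom{n}{r}\bigl(a_0(\psi_m)+\overline{a_0(\phi_m)}\bigr)\Lef^{-n}$.

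Finally I would equate the two and compare the coefficients of $\Lef$ and $\Lef^{-n}$, which are linearly independent for $n\geq1$. The $\Lef^{-n}$-coefficient gives $(\sigma_{n+1}(m)-\lambda_m)\,\alpha=a_0(\psi_m)+\overline{a_0(\phi_m)}$ after cancelling the nonzero factor $(-1)^r\binom{n}{r}/m$; the $\Lef$-coefficient gives $(\sigma_{n+1}(m)-\lambda_m)\,a=0$, which is consistent ($a=0$ when $f$ is cuspidal, and $\sigma_{n+1}(m)=\lambda_m$ identically when $f$ is Eisenstein). For the last assertion, when $f$ is a cusp form $\lambda_m$ is the Hecke eigenvalue of a holomorphic cuspidal eigenform of weight $n+2$, so the Deligne bound (indeed already Hecke's bound) gives $|\lambda_m|=o(m^{n+1})$, whereas $\sigma_{n+1}(m)\geq m^{n+1}$; hence $\sigma_{n+1}(m)\neq\lambda_m$ for all sufficiently large $m$, and dividing yields (\ref{alphaformula}). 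The only step requiring care is the bookkeeping of the combinatorial factors $(-1)^r\binom{n}{r}$ produced by the iterated derivatives, together with the observation that $T_m$ scales $\Lef$ and $\Lef^{-n}$ by the same scalar in weight $n$; everything else is immediate from the results already established.
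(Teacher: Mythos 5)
Your argument is correct and is essentially the paper's own proof: take constant parts of the Hecke identity \eqref{TmonXrs}, use \eqref{TNf0} to see that $T_m$ scales $\Lef^{-n}$ (and $\Lef$) by $\sigma_{n+1}(m)/m$ in total weight $n$, compute the constant term of the iterated derivatives of $\psi_m$, $\overline{\phi_m}$ via \eqref{partialraction}, and finish with the Hecke bound against $\sigma_{n+1}(m)\geq m^{n+1}$. The only (harmless) difference is that you carry out the bookkeeping for general $(r,s)$, whereas the paper treats only $(r,s)=(n,0)$ and reads off the general constant term from \eqref{Xrsexplicitformula}; your extra observation about the $\Lef$-coefficient being consistent in both the cuspidal and Eisenstein cases is a nice sanity check not spelled out in the paper.
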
 
 
 \begin{proof}
 By proposition \ref{propHeckeonXrs}, 
 $$( m T_m - \lambda_m)  X_{n,0} =  \Lef^{-n} \Big(\frac{\partial^n}{n!} \psi_m + \overline{\phi_m}\Big) $$
for all $m\geq 1$.  By equation (\ref{Xrsexplicitformula}), the constant term of $X_{n,0}$ is
$X^0_{n,0} = \alpha \, \Lef^{-n}$, 
since $f$ is a cusp form. 
On the other hand, equation (\ref{TNf0}) in weight $n$ implies that
 $$m  T_m\,  \Lef^{-n} = \sigma_{-n-1}(m) m^{n+1} \,  \Lef^{-n} = \sum_{d|m} \Big(\frac{m}{d}\Big)^{n+1} \Lef^{-n} = \sigma_{n+1}(m)\,  \Lef^{-n}\ .$$
 Putting the pieces together yields 
 $$ (m T_m - \lambda_m) X_{n,0}^0 =  (  \sigma_{n+1}(m) - \lambda_m) \alpha \,\Lef^{-n} =  \Lef^{-n}\,  \big(  a^{(0)}_{0,0} \Big( \frac{\partial^n}{n!} \psi_m\Big)+  a^{(0)}_{0,0} (  \overline{\phi_m})\big)  $$
 where $a^{(0)}_{0,0}$ denotes the  coefficient in the expansion (\ref{fexp}). Since $\psi_m \in M_{-n}^!$, 
 $$a^{(0)}_{0,0}( \partial^n \psi_m) = a^{(0)}_{0,0}(  \partial_{-1} \ldots \partial_{-n} \psi_m) =  (-1)^n n! a^{(0)}_{0,0} (\psi_m)$$
 by successive application of  (\ref{partialraction}), which never decreases the powers of $\Lef$. 
 
The $\lambda_m$ are the eigenvalues of a normalised  holomorphic Hecke eigenform $g \in S_{n+2}$.  Then $\lambda_m = a_m(g)$ and  an elementary estimate \cite{Lang}, Lemma 2, implies that   $|a_m(g)|$ grows at most like $m^{n/2+1}$. 
Since $\sigma_{n+1}(m)\geq m^{n+1}$, it follows that $( \sigma_{n+1}(m)- \lambda_m)$ is non-zero for sufficiently large $m$.
 \end{proof} 

 The consistency of equations (\ref{alphaformula})   for different values of $m$ follows from (\ref{TmandTnonpsi}). Equation (\ref{alphanotation}) would have poles for every $n$ if $f$ were an Eisenstein series  by (\ref{TmonGE}).

 \section{Existence of modular primitives} \label{sectExistence}
Having determined the form of modular primitives, we now turn to their existence.

 \subsection{Cocycles and periods}
 Let us fix  a system  $\llam$  of Hecke eigenvalues corresponding to a cuspidal eigenform $f\in S_{n+2}$,  and let $H^{dR}_{\llam}$, $H^B_{\llam}$ be as defined in \S\ref{sectWeakHol}. For simplicity, we shall drop the subscripts  $\llam$ from now on, and set $K= K_{\llam}$.  Let 
 $$f  \in M_{n+2}  \qquad \hbox{ and } \qquad  f' \in M^!_{n+2}$$
 denote a $K$-basis  for $H^{dR}$  of the form  (\ref{Hbasis}). Likewise, choose a $K$-basis $P^+$ of $H^{B,+}$ and $P^-$ of $H^{B,-}$. We have $P^{\pm}_T =0$. The polynomials  $P^{\pm}_S$ are known, respectively,  as the even  and odd period polynomials of $f$.

 Let us choose a basepoint $z_0 \in \HH$ and let  $C, C' \in Z^1(\Gamma; V_n)$ denote the cocycles associated to $f$ and  $f'$, respectively.  
 The comparison isomorphism (\ref{compisom})  implies that 
 \begin{eqnarray} \label{classesCfasPs}
{[}C']   & = &   \eta^+  \,  [P^+]  + i \eta^- \, [P^-]   \\
{[}C]   & = &   \omega^+  \, [P^+]  + i \omega^- \, [P^-]  \nonumber
\end{eqnarray} 
where $\omega^{+},  i \omega^{+}, \eta^{+}, i \eta^{-}$ are the entries of the period matrix  in these bases. 
\begin{lem} There exists a canonical Hecke-equivariant splitting  over $\Q$:
$$s: H_{\mathrm{cusp}}^1(\Gamma; V_n) \To Z_{\mathrm{cusp}}^1(\Gamma; V_n)\ .$$
\end{lem}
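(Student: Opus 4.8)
The plan is to construct the splitting $s$ by hand using the fact that $H^1_{\mathrm{cusp}}(\Gamma;V_n)$ is, up to a shift, dual to itself via the pairing $\{\ ,\ \}$ of \S\ref{sectWeakHol}. The key observation is that a cuspidal cocycle $C$ (one with $C_T=0$) is completely determined by the single polynomial $C_S \in V_n$, subject to the cocycle relations coming from the relations $S^2=(ST)^3=1$ in $\mathrm{PSL}_2(\Z)$; explicitly $C_S|_S + C_S = 0$ and $C_S|_{ST} + C_S|_{(ST)^2} + C_S = 0$ (using $C_T=0$). Thus $Z^1_{\mathrm{cusp}}(\Gamma;V_n)$ is canonically the $\Q$-subspace $W_n \subset V_n$ of period polynomials, and $B^1_{\mathrm{cusp}}(\Gamma;V_n)$ is the subspace of coboundaries that happen to be cuspidal, i.e. $\{P|_S - P : P|_T = P\}$, which is the image of $\Q Y^n$ (the $T$-fixed part of $V_n$) plus possibly $\Q X^n$. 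So the lemma amounts to producing a canonical $\Q$-linear, Hecke-equivariant complement to this small coboundary subspace inside $W_n$.

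The first step is to identify, inside the space of period polynomials $W_n$, the canonical complement to the coboundaries. I would use the decomposition of $W_n$ into its even and odd parts under the involution $P(X,Y)\mapsto P(-X,-Y)$ — equivalently under $\epsilon$ — which by (\ref{H1pmdecomp}) matches the $F_\infty$-eigenspace decomposition on cohomology; Hecke operators preserve this decomposition (they commute with $F_\infty$). On the odd part there are no cuspidal coboundaries at all, so the splitting is forced. On the even part, the only coboundary is (a multiple of) $(X^2+Y^2)^{?}\cdots$ — concretely the class of $X^n - Y^n$ type period polynomial — and I would pin down the complement by imposing a single linear condition that is manifestly Hecke-stable: e.g. orthogonality to $Y^n$ under the Haberland/Petersson pairing $\{\ ,\ \}$, using that this pairing is Hecke self-adjoint ($\{T_m f,g\}=\{f,T_m g\}$, stated in \S\ref{sectWeakHol}) and that $Y^n$ spans the image of the coboundaries. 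The second step is then to check the three properties: that $s$ lands in $Z^1_{\mathrm{cusp}}$ (automatic by construction in $W_n$), that $s$ is a genuine section of the projection $Z^1_{\mathrm{cusp}}\to H^1_{\mathrm{cusp}}$ (i.e. the chosen complement maps isomorphically, which follows from the rank count $\dim H^1_{\mathrm{cusp}}(\Gamma;V_n)=2\dim S_{n+2}$ against the one-dimensional coboundary space on the even side), and that it is defined over $\Q$ and Hecke-equivariant (both immediate from the defining linear condition since $\{\ ,\ \}$ is defined over $\Q$ and Hecke-adjoint).

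Alternatively — and this may be the cleaner route — I would invoke the action of Hecke operators on cocycles from \cite{Ma0} mentioned just above the lemma: there is a Hecke-stable decomposition $Z^1(\Gamma;V_n) = B^1(\Gamma;V_n) \oplus Z^1(\Gamma;V_n)/B^1$ already available once one knows the Hecke action on $Z^1$ is semisimple on the relevant finite-dimensional piece, and one restricts to the cuspidal part. Then $s$ is simply the inclusion of the canonical complement. I expect the main obstacle to be the \emph{canonicity and $\Q$-rationality} of the complement to the coboundaries on the even part: one must exhibit an explicitly $\Q$-rational, Hecke-equivariant splitting rather than merely an abstract one, and the natural candidate (orthogonal complement of the one-dimensional coboundary line under Haberland's pairing) requires checking that this line is itself defined over $\Q$ and Hecke-stable — which it is, being spanned by the image of $Y^n\in V_n(\Q)$ under $P\mapsto P|_S-P$, and $Y^n$ generates the trivial Hecke submodule $V_n^T$. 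Once that is in hand, the section is forced and the verification of the three properties is routine.
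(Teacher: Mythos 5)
The paper does not prove this lemma inline: its proof is the single citation ``See \cite{MMV}, lemma 7.3'', and the argument there is essentially your \emph{alternative} route, not your primary one. That alternative route is the right idea, but the hypothesis you invoke is off: what makes the splitting exist, and be canonical and $\Q$-rational, is not semisimplicity of the Hecke action on $Z^1$ (which you would still have to prove) but the disjointness of the Hecke eigenvalue systems of the sub and quotient of $Z^1_{\mathrm{cusp}}(\Gamma;V_n)$. Concretely, $\dim Z^1_{\mathrm{cusp}}(\Gamma;V_n) = 2\dim S_{n+2}+2$: besides $H^1_{\mathrm{cusp}}$ it contains the coboundary line $\Q(X^n-Y^n)$ (the coboundary of $Y^n\in V_n^T$) \emph{and} a lift of the Eisenstein class, both carrying the Eisenstein eigenvalues $\sigma_{n+1}(m)$, while cuspidal systems satisfy $|\lambda_m|=O(m^{n/2+1})$ --- the same elementary estimate the paper uses in \S\ref{sectalpha}. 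Hence the characteristic polynomials of the Hecke operators on $Z^1_{\mathrm{cusp}}$ factor over $\Q$ into coprime cuspidal and Eisenstein pieces, and the resulting generalized-eigenspace projector is the canonical, $\Q$-rational, Hecke-equivariant section. No appeal to semisimplicity is needed, and restricting to the cuspidal part is exactly what removes the Eisenstein lift, which your first route never accounts for.

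Your primary construction has a concrete gap on the even part. The pinning-down condition ``orthogonality to $Y^n$ under the Haberland pairing'' does not parse: that pairing is defined on $Z^1(\Gamma;V_n)\times Z^1_{\mathrm{cusp}}(\Gamma;V_n)$ and pairs the $+$ and $-$ eigenspaces of $F_\infty$ against each other, whereas $Y^n$ is an element of $V_n$ and not a cocycle; and pairing against the coboundary $X^n-Y^n$ itself is identically zero on cuspidal cocycles, since coboundaries lie in the radical of the pairing restricted there. More decisively, any condition of the form ``orthogonal to a fixed, weight-independent rational vector'' would yield a splitting insensitive to Eisenstein congruences, but the actual canonical lift in weight $12$ is $P^+_S=\tfrac{36}{691}(Y^{10}-X^{10})+X^2Y^2(X^2-Y^2)^3$ (see \S\ref{sectExample}): the denominator $691$ is the Eisenstein congruence prime for $\Delta$, a fingerprint of the Hecke-theoretic construction that your proposed condition cannot reproduce. (Your treatment of the odd part is correct: there $Z^{1,-}_{\mathrm{cusp}}\to H^{1,-}$ is injective of full rank, so the section over $H^{1,-}_{\mathrm{cusp}}$ is forced and automatically rational and equivariant.)
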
 
\begin{proof} See \cite{MMV}, lemma 7.3. \end{proof}
We can assume that $P^+, P^- \in Z_{\mathrm{cusp}}^1(\Gamma; V_n \otimes K)$ are the unique Hecke-equivariant lifts of the cohomology classes chosen earlier.  They satisfy $P^{\pm}_T=0$. 

\begin{cor} \label{corcocyff'} 
There exist polynomials $Q, Q' \in V_n \otimes \C$ such that  for all $\gamma \in \Gamma$, 
\begin{eqnarray}
C'_{\gamma} & = &  \eta^+  \, P^+_{\gamma}  + i \eta^- \, P^{-}_{\gamma}    +  Q'(X,Y)\big|_{\gamma -\id}   \nonumber \\
C_{\gamma} & = &  \omega^+  \, P^+_{\gamma}  + i \omega^- \, P^{-}_{\gamma}    +  Q(X,Y)\big|_{\gamma -\id}   \ . \nonumber 
\end{eqnarray} 
The polynomials $Q, Q'$   depend on the choice of basepoint $z_0$. \end{cor}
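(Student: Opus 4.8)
The plan is to observe that Corollary~\ref{corcocyff'} is nothing more than an unwinding of the definition of group cohomology, combined with the cohomological identities~(\ref{classesCfasPs}). First I would recall that $P^+, P^-$ are genuine cocycles in $Z^1_{\mathrm{cusp}}(\Gamma; V_n\otimes K)\subset Z^1(\Gamma; V_n\otimes\C)$ representing the classes $[P^+],[P^-]$, so that $\gamma\mapsto \eta^+P^+_\gamma+i\eta^-P^-_\gamma$ is a cocycle with values in $V_n\otimes\C$ whose cohomology class is $\eta^+[P^+]+i\eta^-[P^-]$. By the first line of~(\ref{classesCfasPs}) this class equals $[C']$. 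Hence the difference
$$ D'_\gamma \ := \ C'_\gamma - \eta^+P^+_\gamma - i\eta^-P^-_\gamma $$
is a cocycle in $Z^1(\Gamma; V_n\otimes\C)$ whose class vanishes, i.e.\ $D'\in B^1(\Gamma; V_n\otimes\C)$. By the definition of the coboundary space there is a polynomial $Q'\in V_n\otimes\C$ with $D'_\gamma = Q'\big|_{\gamma}-Q' = Q'\big|_{\gamma-\id}$ for all $\gamma\in\Gamma$, which is exactly the first displayed identity. The second identity follows in the identical way from the second line of~(\ref{classesCfasPs}), producing $Q\in V_n\otimes\C$.

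Next I would address the asserted dependence on the basepoint. The cocycle attached to a weakly holomorphic form in~(\ref{Cfdef}) depends on the auxiliary point $\tau_0$, and replacing $z_0$ by another point $z_1\in\HH$ changes both $C$ and $C'$ by the explicit coboundary $\gamma\mapsto B\big|_{\gamma-\id}$ with $B = \int^{z_0}_{z_1}(2\pi i)^{n+1} f(\tau)(X-\tau Y)^n\, d\tau$ (respectively the analogous integral for $f'$), as already noted after~(\ref{Cfdef}). Since $P^\pm$ are the fixed canonical Hecke-equivariant lifts, independent of $z_0$, the polynomials $Q,Q'$ are shifted by the corresponding polynomials $B$. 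Thus $Q,Q'$ are only well-defined after $z_0$ has been fixed, as stated.

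There is essentially no hard step here: the one point to check with a little care is that the coboundary $Q'$ can genuinely be taken in $V_n\otimes\C$ rather than in some larger completed space. This is immediate, since $B^1(\Gamma; V_n\otimes\C) = B^1(\Gamma; V_n)\otimes_\Q\C$ by flatness of $\otimes_\Q\C$, and by definition every element of $B^1(\Gamma; V_n)$ has the form $\gamma\mapsto P\big|_{\gamma-\id}$ with $P\in V_n$; tensoring up gives $Q'\in V_n\otimes\C$. (The mention of the Hecke-equivariant lift is not logically needed for this existence statement, but it is what will make the later formulas in terms of $Q,Q'$ well-behaved under Hecke operators.)
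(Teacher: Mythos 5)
Your argument is correct and is exactly the (implicit) argument the paper intends: the identities \eqref{classesCfasPs} say the cocycles $C'$ and $\eta^+P^+ + i\eta^-P^-$ (resp.\ $C$ and $\omega^+P^+ + i\omega^-P^-$) are cohomologous, so their difference is a coboundary $\gamma\mapsto Q'\big|_{\gamma-\id}$ with $Q'\in V_n\otimes\C$, and the basepoint dependence enters only through the coboundary ambiguity of $C, C'$ noted after \eqref{Cfdef}. The paper leaves the corollary without a written proof precisely because it is this unwinding of definitions, so there is nothing to add.
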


\subsection{Real and imaginary analytic cusp forms} We shall construct explicit modular primitives of cusp forms in two steps. 
Recall that  the integrals $F_{f}(\tau)$ were defined in (\ref{Ffdefn}) relative to the basepoint $z_0 \in \HH$. 
 
\begin{defn} Define  real analytic functions $\HH \rightarrow V_{n} \otimes \C$ by 
\begin{eqnarray}
\mathcal{I}_f(\tau) & = & (2 \pi i)^{-2n} \Big(  \omega^+ \mathrm{Re}\, \big(  \mathcal{F}_{f'}(\tau)  -Q' \big)  - \eta^+   \mathrm{Re} \, \big( \mathcal{F}_{f}(\tau)  -Q\big) \Big)\nonumber  \\ 
\mathcal{R}_f(\tau) & = &   (2 \pi i)^{-2n} \Big(  \omega^{-} \mathrm{Im}\, \big(  \mathcal{F}_{f'}(\tau)  -Q'   \big)  - \eta^{-}   \mathrm{Im} \, \big( \mathcal{F}_{f}(\tau) - Q \big) \Big)    \nonumber \ .
\end{eqnarray} 
Note that (\ref{Ffdefn}) involves an odd power of $2 \pi i$, which explains why  `real' and `imaginary' are apparently  interchanged. 
\end{defn}
These functions satisfy  the differential equations
\begin{eqnarray}  d \mathcal{I}_f(z) & = &  \omega^+ \mathrm{Re}\, \big( 2\pi i  f'(z) (X- z Y)^n dz \big)  -\eta^+ \mathrm{Re}\, \big( 2\pi i  f(z) (X- z Y)^n dz \big) 
\nonumber \\
  & = &  \pi i \, \big( \omega^{+}  f'  - \eta^{+} f  \big) (X-zY)^n dz +  \pi i \big( \eta^{+} \overline{f}- \omega^{+}  \overline{f'} \big)  (X-  \overline{z} Y)^n d\overline{z}  \nonumber 
\end{eqnarray}
and  similarly
$$ d \mathcal{R}_f(z) =   \pi i \, \big( \omega^{-}  f'  - \eta^{-} f  \big) (X-zY)^n dz +  \pi i \big(\omega^{-}  \overline{f'}-  \eta^{-} \overline{f}  \big)  (X-  \overline{z} Y)^n d\overline{z}  $$

\begin{thm}
The functions $\mathcal{I}_f(\tau)$ and $\mathcal{R}_f(\tau)$ are well-defined (independent of the choice of basepoint $z_0$), and $\Gamma$-equivariant.
\end{thm}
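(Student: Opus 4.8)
\emph{Strategy.} The plan is to reduce both assertions to the cocycle identities of Corollary~\ref{corcocyff'}, working with the $V_n\otimes\C$-valued functions $\mathcal{F}_f,\mathcal{F}_{f'}$ of (\ref{Ffdefn}) rather than with any scalar data. The structural facts that make everything go through are: the periods and quasi-periods $\omega^{\pm},\eta^{\pm}$ are real; the Hecke-equivariant lifts $P^{\pm}$ are defined over $K\subset\R$, i.e.\ have real polynomial coefficients; and $(2\pi i)^{-2n}=(-1)^n(2\pi)^{-2n}\in\R$. Hence $\mathrm{Re}$ and $\mathrm{Im}$ on $V_n\otimes\C$ are $\R$-linear, commute with multiplication by all of these scalars, and commute with the slash action $(\,\cdot\,)|_{\gamma}$, which is $\C$-linear and given by integer matrices. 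Setting
$$G=\omega^{+}(\mathcal{F}_{f'}-Q')-\eta^{+}(\mathcal{F}_{f}-Q)\ ,\qquad H=\omega^{-}(\mathcal{F}_{f'}-Q')-\eta^{-}(\mathcal{F}_{f}-Q)\ ,$$
we have $\mathcal{I}_f=(2\pi i)^{-2n}\mathrm{Re}(G)$ and $\mathcal{R}_f=(2\pi i)^{-2n}\mathrm{Im}(H)$, so it suffices to establish the two required properties for $G$ and $H$.

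\emph{Independence of the basepoint.} Replacing $z_0$ by $z_0'$ replaces $\mathcal{F}_f$ by $\mathcal{F}_f+c$, where $c=(2\pi i)^{n+1}\int_{z_0}^{z_0'}f(\tau)(X-\tau Y)^n\,d\tau$ is a fixed element of $V_n\otimes\C$, and hence replaces the cocycle $C$ by $\gamma\mapsto C_{\gamma}+c|_{\gamma-\id}$ (similarly $\mathcal{F}_{f'}$ by $\mathcal{F}_{f'}+c'$). Since $n\geq 10$ because $S_{n+2}\neq 0$, we have $V_n^{\Gamma}=0$, so the polynomial $Q$ produced by Corollary~\ref{corcocyff'} is uniquely determined by $C$ and the fixed cocycles $P^{\pm}$; comparing the defining identities shows $Q$ is replaced by $Q+c$, and $Q'$ by $Q'+c'$. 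Therefore $\mathcal{F}_f-Q$ and $\mathcal{F}_{f'}-Q'$ are each unchanged, so $G$, $H$, and thus $\mathcal{I}_f$, $\mathcal{R}_f$, do not depend on $z_0$.

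\emph{$\Gamma$-equivariance.} For $\gamma\in\Gamma$ the difference $G(\gamma\tau)|_{\gamma}-G(\tau)$ is independent of $\tau$, since each $\mathcal{F}_{\bullet}(\gamma\tau)|_{\gamma}-\mathcal{F}_{\bullet}(\tau)$ is the associated cocycle of (\ref{Cfdef}); substituting $C'_{\gamma}-Q'|_{\gamma-\id}=\eta^{+}P^{+}_{\gamma}+i\eta^{-}P^{-}_{\gamma}$ and $C_{\gamma}-Q|_{\gamma-\id}=\omega^{+}P^{+}_{\gamma}+i\omega^{-}P^{-}_{\gamma}$ from Corollary~\ref{corcocyff'} gives
$$G(\gamma\tau)|_{\gamma}-G(\tau)=\omega^{+}(\eta^{+}P^{+}_{\gamma}+i\eta^{-}P^{-}_{\gamma})-\eta^{+}(\omega^{+}P^{+}_{\gamma}+i\omega^{-}P^{-}_{\gamma})=i\,(\omega^{+}\eta^{-}-\eta^{+}\omega^{-})\,P^{-}_{\gamma}\ .$$
The right-hand side has purely imaginary coefficients, since $\omega^{+}\eta^{-}-\eta^{+}\omega^{-}\in\R$ and $P^{-}_{\gamma}$ has real coefficients; multiplying by $(2\pi i)^{-2n}\in\R$ and applying $\mathrm{Re}$ annihilates it, so $\mathcal{I}_f(\gamma\tau)|_{\gamma}=\mathcal{I}_f(\tau)$. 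Identically, $H(\gamma\tau)|_{\gamma}-H(\tau)=(\omega^{-}\eta^{+}-\eta^{-}\omega^{+})\,P^{+}_{\gamma}$ has real coefficients, so it is annihilated by $\mathrm{Im}$ and $\mathcal{R}_f(\gamma\tau)|_{\gamma}=\mathcal{R}_f(\tau)$.

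\emph{Main obstacle.} There is no deep difficulty here; the argument is entirely dictated by Corollary~\ref{corcocyff'}, and the only point requiring care is the bookkeeping of reality. One must keep straight that $\omega^{\pm},\eta^{\pm},(2\pi i)^{-2n}$ are all real and that $P^{\pm}$ are defined over $K\subset\R$, so that the two obstruction cocycles in the last step come out purely imaginary (for $G$) and purely real (for $H$), hence are killed by $\mathrm{Re}$, resp.\ $\mathrm{Im}$; and one must invoke $V_n^{\Gamma}=0$ to get basepoint independence on the nose rather than merely modulo a constant.
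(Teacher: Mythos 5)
Your proof is correct. The $\Gamma$-equivariance argument is essentially the paper's: you substitute the two identities of Corollary~\ref{corcocyff'} and observe that the obstruction cocycle is purely imaginary (for $G$) or purely real (for $H$) because $\omega^{\pm},\eta^{\pm}\in\R$ and $P^{\pm}$ have coefficients in $K\subset\R$, so it is killed by $\mathrm{Re}$, resp.\ $\mathrm{Im}$; the paper simply applies $\mathrm{Re}$ and $\mathrm{Im}$ before slashing, so that the $P^{-}$ (resp.\ $P^{+}$) terms never appear and the cancellation reads $\omega^{+}(\eta^{+}P^{+}_{\gamma})-\eta^{+}(\omega^{+}P^{+}_{\gamma})=0$. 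Where you genuinely diverge is basepoint independence. The paper argues analytically: a change of $z_0$ produces another modular-equivariant solution of the same differential equation for $d\mathcal{I}_f$, and Lemma~\ref{lemEsection} forces the difference of two such solutions to vanish (using $n>0$). You instead argue algebraically: since $V_n^{\Gamma}=0$ for $n\geq 1$, the polynomial $Q$ in Corollary~\ref{corcocyff'} is uniquely determined by the cocycle $C$ and the fixed lifts $P^{\pm}$, and shifting the basepoint replaces $(\mathcal{F}_f,C,Q)$ by $(\mathcal{F}_f+c,\,C+c|_{(\cdot)-\id},\,Q+c)$, so $\mathcal{F}_f-Q$ is unchanged on the nose. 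Your route gives the slightly stronger conclusion that the individual normalised primitives $\mathcal{F}_f-Q$, $\mathcal{F}_{f'}-Q'$ are themselves basepoint-independent (not merely the real and imaginary combinations), and it avoids invoking the uniqueness lemma at this stage; the paper's route is the one that recurs later in the text (uniqueness of equivariant solutions to a given differential equation) and so is more uniform with the rest of the argument. Both are complete.
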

 \begin{proof}  The $\Gamma$-equivariance of    $\mathcal{I}_f(\tau)$ follows from  corollary \ref{corcocyff'}:
 $$\mathcal{I}_f(\gamma \tau) \big|_{\gamma}- \mathcal{I}_f(\tau) =   \omega^+     \big(  \eta^+  \, P^+_{\gamma}  )  - \eta^+   \big( \omega^+  \, P^+_{\gamma}    \big)  = 0 \ .$$
  Changing base point  $z_0$ yields a modular equivariant solution to the same differential equation for $d \mathcal{I}_f$ given above (which is independent of the basepoint). By lemma \ref{lemEsection}, any  modular equivariant solution is unique.   The argument for $\mathcal{R}_f(\tau)$ is similar. 
  \end{proof}

Extract the coefficients of $\mathcal{I}_f$ and $\mathcal{R}_f$ via 
\begin{eqnarray} \mathcal{I}_f  &= &  \sum_{r+s=n} I_{r,s} (X-zY)^r (X-\overline{z} Y)^s \nonumber \\
 \mathcal{R}_f  &= &  \sum_{r+s=n} R_{r,s} (X-zY)^r (X-\overline{z}Y)^s \nonumber 
\end{eqnarray} 
  They 
define weakly holomorphic modular forms in $\M^!$.
 \begin{cor} There exists a family $I_{r,s} \in  \M^!_{r,s}$, for $r+s=n$,  such that 
 \begin{eqnarray} \label{Rfamilyeqn1} 
 \partial \, I_{r,s} & = &  (r+1) I_{r+1, s-1}    \qquad \qquad  \hbox{ for all  } \, 1\leq s\leq n \  \nonumber  \\
 \overline{\partial} \,I_{r,s} & = & (s+1) I_{r-1, s+1}  \qquad  \qquad \hbox{ for all  } \, 1\leq r \leq n \ , \   \nonumber 
\end{eqnarray} 
and
$$ \partial \, I_{n,0}   =  \LL   \big(  \omega^+  f'     - \eta^+  f  \big)    \qquad , \qquad   \overline{\partial} \, I_{0,n}   =  \LL   \big(   \eta^+  \overline{f} - \omega^+  \overline{f'}     \big) $$
They are `imaginary' in the sense that  $\overline{I}_{r,s}= -I_{s,r}$.

Similarly,  there exists a family of elements  $R_{r,s} \in   \M^!_{r,s}$  for $r+s=n$ and $r,s\geq 0$, 
 satisfying the identical equations, except that the last line is replaced by 
 $$  \partial \, R_{n,0}   =  \LL   \big(  \omega^{-}  f'     - \eta^{-}  f  \big)    \qquad \hbox{ and } \qquad 
 \overline{\partial} \, R_{n,0}   =    \LL   \big(  \omega^{-}  \overline{f'}  - \eta^{-}  \overline{f}     \big)  $$
They are `real' in the sense that  $\overline{R}_{r,s} =  R_{s,r}$.  
 \end{cor}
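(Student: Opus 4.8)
The plan is to obtain the corollary as a more or less formal consequence of the preceding theorem together with the dictionary of \S\ref{vectvaluedandFrs}. By that theorem the functions $\mathcal{I}_f,\mathcal{R}_f\colon\HH\to V_n\otimes\C$ are real analytic and $\Gamma$-equivariant. Writing $\mathcal{I}_f=\sum_{r+s=n}I_{r,s}(X-zY)^r(X-\overline{z}Y)^s$, and similarly for $\mathcal{R}_f$, each component $I_{r,s}$ (resp.\ $R_{r,s}$) is then automatically modular of weights $(r,s)$ by the correspondence of \cite{ZagFest}, proposition~7.2 (recalled in \S\ref{vectvaluedandFrs}). To place the components in $\M^!_{r,s}$ it suffices, by the same proposition, to check that $\mathcal{I}_f$ and $\mathcal{R}_f$ admit expansions in $\C[q^{-1},\overline{q}^{-1},q,\overline{q}]][z,\overline{z}]$; this is immediate from the explicit definitions, since the iterated integrals $\mathcal{F}_f(\tau),\mathcal{F}_{f'}(\tau)$ are computed by integrating the $q$-expansions of $f,f'$ against $(X-wY)^n\,dw$ term by term (integrating by parts on the nonzero Fourier modes), and the constant polynomials $Q,Q'$ and the operators $\mathrm{Re},\mathrm{Im}$ do not spoil this.

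Next I would read off the system of differential equations. The one-forms $d\mathcal{I}_f(z)$ and $d\mathcal{R}_f(z)$ have already been computed just before the theorem; comparing them with (\ref{dFgeneral}) and invoking the equivalence with the system (\ref{partialFgeneral}) — with $\omega^{+}f'-\eta^{+}f\in M^!_{n+2}$ playing the role of $f$ in that equivalence for $\mathcal{I}_f$, and $\omega^{-}f'-\eta^{-}f$ for $\mathcal{R}_f$ — yields at once $\partial I_{r,s}=(r+1)I_{r+1,s-1}$ for $1\le s\le n$, $\overline{\partial}I_{r,s}=(s+1)I_{r-1,s+1}$ for $1\le r\le n$, together with $\partial I_{n,0}=\LL(\omega^{+}f'-\eta^{+}f)$ and $\overline{\partial}I_{0,n}=\LL(\eta^{+}\overline{f}-\omega^{+}\overline{f'})$, and the analogous system for $R_{r,s}$ with $\eta^{-},\omega^{-}$ in place of $\eta^{+},\omega^{+}$. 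As a byproduct $\overline{\partial}\partial I_{n,0}=0$, so $I_{r,s},R_{r,s}\in\HM^!(-n)$, although that is not needed here.

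Finally I would establish the reality/imaginarity statements. Since $X,Y$ are defined over $\Q$, pointwise complex conjugation carries $(X-zY)^r(X-\overline{z}Y)^s$ to $(X-zY)^s(X-\overline{z}Y)^r$, whence $\overline{\mathcal{I}_f}=\sum_{r+s=n}\overline{I_{s,r}}(X-zY)^r(X-\overline{z}Y)^s$; consequently the relation $\overline{I}_{r,s}=-I_{s,r}$ for all $r,s$ is equivalent to $\overline{\mathcal{I}_f}=-\mathcal{I}_f$, and $\overline{R}_{r,s}=R_{s,r}$ to $\overline{\mathcal{R}_f}=\mathcal{R}_f$. To prove these one conjugates the explicit formulas for $d\mathcal{I}_f,d\mathcal{R}_f$ above (using $\overline{\pi i}=-\pi i$, $\overline{dz}=d\overline{z}$, the reality of $\eta^{\pm},\omega^{\pm}$, and the fact that $n$ is even) to determine $\overline{d\mathcal{I}_f}$ and $\overline{d\mathcal{R}_f}$ in terms of $d\mathcal{I}_f$ and $d\mathcal{R}_f$; then the relevant combination $\overline{\mathcal{I}_f}\pm\mathcal{I}_f$ (resp.\ $\overline{\mathcal{R}_f}\pm\mathcal{R}_f$) is a $\Gamma$-equivariant section killed by $d$, and Lemma~\ref{lemEsection} forces it to vanish because $n>0$. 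Equivalently, one can argue directly from the defining formulas: complex conjugation interchanges $\mathcal{F}_f(\tau)$ with its conjugate, hence fixes the $\mathrm{Re}$-pieces and negates the $\mathrm{Im}$-pieces of the bracketed expressions, up to the normalising factor $(2\pi i)^{-2n}$ and the odd power $n+1$ of $2\pi i$ occurring in (\ref{Ffdefn}) — this last point being exactly the "apparent interchange of real and imaginary" noted after the definition.

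I expect this last step to be the only genuinely delicate one: everything else is a mechanical unwinding of \S\ref{vectvaluedandFrs} and the preceding theorem, but to land on the signs as stated one must keep careful track of the factors of $i$ carried by $\mathcal{F}_f$ and by the normalising constant, so that it is indeed $\mathcal{I}_f$ that comes out "imaginary" and $\mathcal{R}_f$ that comes out "real" — the opposite of what the symbols $\mathrm{Re}$ and $\mathrm{Im}$ in their definitions might naively suggest.
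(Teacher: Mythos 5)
Your treatment of the differential system is exactly the paper's proof: the paper disposes of the corollary in one line as ``a straightforward application of (\ref{partialFgeneral}) to the previous discussion'', i.e.\ precisely the unwinding of the dictionary of \S\ref{vectvaluedandFrs} against the displayed formulas for $d\mathcal{I}_f$ and $d\mathcal{R}_f$ that you carry out. So for that part there is nothing to add.

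The reality/imaginarity statements are where you correctly locate the only delicate point, but your second, ``direct'' route does not work as described and you should discard it. Both $\mathrm{Re}(W)$ and $\mathrm{Im}(W)$ are real elements of $V_n\otimes\R$, and the prefactor $(2\pi i)^{-2n}$ and the periods $\omega^{\pm},\eta^{\pm}$ are all real; hence \emph{both} $\mathcal{I}_f$ and $\mathcal{R}_f$, as literally defined, take values in $V_n\otimes\R$, and pointwise conjugation fixes each of them. Since conjugation carries $(X-zY)^r(X-\overline{z}Y)^s$ to $(X-zY)^s(X-\overline{z}Y)^r$, a real-valued equivariant section can only ever yield the relation $\overline{F}_{r,s}=F_{s,r}$; no amount of bookkeeping of the $\mathrm{Re}$/$\mathrm{Im}$ structure of the definitions will produce the minus sign in $\overline{I}_{r,s}=-I_{s,r}$. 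The sign has to be extracted from your first route: take the displayed equations $\partial I_{n,0}=\LL h$, $\overline{\partial}I_{0,n}=-\LL\overline{h}$ with $h=\omega^+f'-\eta^+f$ (which carry the extra odd power of $2\pi i$ from (\ref{Ffdefn}) relative to the raw definitions --- this is exactly the ``apparent interchange'' the paper flags), check that the family $J_{r,s}:=-\overline{I_{s,r}}$ satisfies the identical system using $\partial\,\overline{G}=\overline{\overline{\partial}G}$ and the reality of $\LL$, and conclude $J=I$ from Lemma~\ref{lemEsection} with $n>0$. The same argument with the opposite sign convention gives $\overline{R}_{r,s}=R_{s,r}$ from $\overline{\partial}R_{0,n}=+\LL\overline{h^-}$. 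In short: keep the uniqueness argument, drop the appeal to ``$\mathrm{Re}$ is fixed, $\mathrm{Im}$ is negated'', and be aware that the normalising constants relating the definitions of $\mathcal{I}_f,\mathcal{R}_f$ to the displayed formulas for their differentials are not consistent as printed, so the differential system (not the $\mathrm{Re}$/$\mathrm{Im}$ definition) is the reliable input for the sign.
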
 
 \begin{proof} This is a straightforward application of (\ref{partialFgeneral}) to the previous discussion.
   \end{proof}

   \subsection{Modular primitives of cusp forms}
   Since the period isomorphism is invertible, we can  change basis, to deduce the existence of modular primitives for all cusp forms.
   
   \begin{defn} 
 For any  basis $f,f'$ of (\ref{Hbasis})   define
 \begin{eqnarray}  \mathcal{H}(f) &=  &   p^{-1}\big( \omega_f^{-} \mathcal{I}_f - \omega_f^+ \mathcal{R}_f\big) \nonumber \\
 \mathcal{H}(f')&= &  p^{-1} \big(  \eta^-_f  \mathcal{I}_f - \eta_f^+  \mathcal{R}_f   \big)\nonumber  
 \end{eqnarray} 
 where $p=  ( \omega_f^+ \eta_f^{-} - \omega_f^{-} \eta_f^{+} )=   - i  \, \det(P_f)\neq 0  $. 
 \end{defn}
 Write $\mathcal{H}(f)= \sum_{r+s=n} \mathcal{H}(f)_{r,s} (X-zY)^r(X-\overline{z}Y)^s$ as usual. 
 
 \begin{thm}  \label{thmHfrs} The family of functions $\mathcal{H}(f)_{r,s}$ satisfy the equations
 \begin{eqnarray} 
\partial \, \mathcal{H}(f)_{r,s} & = & (r+1) \mathcal{H}(f)_{r+1, s-1}       \qquad  \hbox{ for all  } \, 1\leq s\leq n \  \nonumber  \\
 \overline{\partial} \, \mathcal{H}(f)_{r,s} & = & (s+1) \mathcal{H}(f)_{r-1, s+1}    \qquad  \hbox{ for all  }\, 1\leq r\leq n \  \nonumber 
\end{eqnarray} 
and
$$\partial \, \mathcal{H}(f)_{n,0}   =  \LL  f  \nonumber   \qquad , \qquad   \overline{\partial} \, \mathcal{H}(f)_{0,n}   = \LL \overline{\s(f)}   $$
The family of functions $\mathcal{H}(f')_{r,s}$  satisfy the same equations with $f$ interchanged everywhere with  $f'$, and $\omega$ interchanged with $\eta$. 
\end{thm}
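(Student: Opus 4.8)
The plan is to reduce the statement to elementary linear algebra over the families $I_{r,s}$ and $R_{r,s}$ produced in the previous corollary. That corollary already supplies elements $I_{r,s},R_{r,s}\in\mathcal{M}^!_{r,s}$ (for all $r+s=n$, $r,s\ge 0$) satisfying the ladder relations $\partial I_{r,s}=(r+1)I_{r+1,s-1}$ and $\overline{\partial}I_{r,s}=(s+1)I_{r-1,s+1}$ (and likewise for $R$), together with the boundary identities
$$\partial I_{n,0}=\LL(\omega^+f'-\eta^+f),\qquad\overline{\partial}I_{0,n}=\LL(\eta^+\overline f-\omega^+\overline{f'}),$$
$$\partial R_{n,0}=\LL(\omega^-f'-\eta^-f),\qquad\overline{\partial}R_{0,n}=\LL(\omega^-\overline{f'}-\eta^-\overline f).$$
By definition $\mathcal{H}(f)_{r,s}=p^{-1}(\omega_f^-I_{r,s}-\omega_f^+R_{r,s})$ and $\mathcal{H}(f')_{r,s}=p^{-1}(\eta_f^-I_{r,s}-\eta_f^+R_{r,s})$ are fixed $\mathbb{C}$-linear combinations, so each lies in $\mathcal{M}^!_{r,s}$, and the ladder relations for $\mathcal{H}(f)_{r,s}$ and $\mathcal{H}(f')_{r,s}$ follow immediately from those for $I_{r,s},R_{r,s}$ by linearity of $\partial$ and $\overline{\partial}$. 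Only the two boundary relations remain to be checked.

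For the holomorphic boundary relation I would substitute and simplify
$$\partial\mathcal{H}(f)_{n,0}=p^{-1}\big(\omega^-\partial I_{n,0}-\omega^+\partial R_{n,0}\big)=p^{-1}\LL\big(\omega^-(\omega^+f'-\eta^+f)-\omega^+(\omega^-f'-\eta^-f)\big).$$
The $f'$-contributions cancel, and the coefficient of $f$ equals $p^{-1}(\omega^+\eta^--\omega^-\eta^+)=1$ by the normalisation $p=\omega_f^+\eta_f^--\omega_f^-\eta_f^+$; hence $\partial\mathcal{H}(f)_{n,0}=\LL f$. Running the identical computation with $\eta$ in place of $\omega$ (now the $f$-contributions cancel instead) gives $\partial\mathcal{H}(f')_{n,0}=\LL f'$. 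For the antiholomorphic boundary relation,
$$\overline{\partial}\mathcal{H}(f)_{0,n}=p^{-1}\LL\big(\omega^-(\eta^+\overline f-\omega^+\overline{f'})-\omega^+(\omega^-\overline{f'}-\eta^-\overline f)\big)=p^{-1}\LL\big((\eta^+\omega^-+\eta^-\omega^+)\overline f-2\omega^+\omega^-\overline{f'}\big).$$
Because the periods $\omega^\pm,\eta^\pm$, and hence $p$, are real, the right-hand side is $\LL$ times the complex conjugate of $p^{-1}\big((\eta^+\omega^-+\eta^-\omega^+)f-2\omega^+\omega^-f'\big)$, and the latter is precisely $\s(f)$ by formula (\ref{introsf}) (using $\eta^-\omega^+-\eta^+\omega^-=p$). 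Thus $\overline{\partial}\mathcal{H}(f)_{0,n}=\LL\overline{\s(f)}$. The analogous computation with $\eta\leftrightarrow\omega$ yields $\overline{\partial}\mathcal{H}(f')_{0,n}=\LL\,p^{-1}\big(2\eta^+\eta^-\overline f-(\eta^+\omega^-+\eta^-\omega^+)\overline{f'}\big)$, which one identifies with $\LL\overline{\s(f')}$ via the corresponding entry of the explicit matrix for $\s$ on $H^{dR}_{\llam}$ recorded in \S\ref{sectRealFrobandsv}.

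Equivalently, and perhaps more transparently, one may note that $\mathcal{H}(f)$ is a $\mathbb{C}$-linear combination of the $\Gamma$-equivariant sections $\mathcal{I}_f$ and $\mathcal{R}_f$, hence itself $\Gamma$-equivariant with components in $\mathcal{M}^!_{r,s}$, compute $d\mathcal{H}(f)=p^{-1}(\omega^-d\mathcal{I}_f-\omega^+d\mathcal{R}_f)$ from the displayed exterior derivatives of $\mathcal{I}_f$ and $\mathcal{R}_f$, obtain (after the same cancellations) $d\mathcal{H}(f)=\pi i\,f(z)(X-zY)^n\,dz+\pi i\,\overline{\s(f)}\,(X-\overline z Y)^n\,d\overline z$, and then invoke the equivalence between (\ref{dFgeneral}) and the system (\ref{partialFgeneral}) from \S\ref{vectvaluedandFrs}. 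I do not expect a serious obstacle here: every step is a linear combination of facts already in hand. The one point that needs genuine care is the period-matrix bookkeeping — the reality of $\omega^\pm,\eta^\pm$, the sign in $p=-i\det\mathrm{P}_f$, and the ordering of the de Rham basis implicit in the period matrix (\ref{periodmatrix}) — so that the antiholomorphic coefficients $\overline{\partial}\mathcal{H}(f)_{0,n}$ and $\overline{\partial}\mathcal{H}(f')_{0,n}$ are matched correctly against the explicit description of the single-valued involution $\s$ in \S\ref{sectRealFrobandsv}.
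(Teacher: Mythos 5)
Your proposal is correct and follows essentially the same route as the paper: both arguments take the $\C$-linear combinations of the families $I_{r,s},R_{r,s}$ from the preceding corollary, inherit the ladder relations by linearity, and verify the two boundary identities by the same cancellation $p^{-1}(\omega^+\eta^--\omega^-\eta^+)=1$. The only cosmetic difference is that the paper identifies the antiholomorphic boundary term via the eigenvector identities $\s(\omega^{\pm}f'-\eta^{\pm}f)=\mp(\omega^{\pm}f'-\eta^{\pm}f)$ together with linearity of $\s$, whereas you match it directly against the explicit matrix of $\s$ from \S\ref{sectRealFrobandsv} --- these are equivalent, and your bookkeeping (reality of the periods, $p=-i\det\mathrm{P}_f$) checks out.
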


\begin{proof} Straightforward consequence of  the previous corollary using:
$$\s \big(  \omega^+  f'     - \eta^+  f  \big) = -  \big(  \omega^+  f'     - \eta^+  f  \big) \qquad \hbox{ and } \qquad \s \big(  \omega^{-}  f'     - \eta^{-}  f  \big) = \big(  \omega^{-}  f'     - \eta^{-}  f  \big)\ . $$
\end{proof}
It follows from  uniqueness (lemma \ref{lemEsection}) that $\mathcal{H}(f)_{r,s}$ is well-defined (only depends on $f$ and not the choice of basis $f, f'$), since it only depends on $f$ and its image under the single-valued involution $\s(f)$, which is canonical.  

In this manner we have defined a canonical modular primitive
\begin{eqnarray} H^{dR}_{\llam} & \To  &  \mathcal{M}^!_{n,0} ( K_{\llam} [ \omega_{\llam}^{\pm}, \eta_{\llam}^{\pm}]) \  . \nonumber \\
 f & \mapsto &   \mathcal{H}(f)_{n,0} \nonumber 
 \end{eqnarray} 
This map is injective since $\Lef^{-1} \partial   \mathcal{H}(f)_{n,0}  = f$. 

\begin{cor} For all $r+s =n$, complex conjugation acts via:
$$\overline{\mathcal{H}(f)}_{r,s} = \mathcal{H}(\s(f))_{s,r}\ .$$
\end{cor}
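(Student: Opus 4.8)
The plan is to recognise $\overline{\mathcal{H}(f)_{r,s}}$ as a solution of exactly the system of differential equations that characterises $\mathcal{H}(\s(f))_{s,r}$, and then to conclude by uniqueness. Two elementary observations are at the heart of it. First, $\overline{\Lef}=\Lef$, since $\Lef=i\pi(z-\overline{z})$ is real. Second, directly from the formulas $\partial|_{\mathcal{M}^!_{a,b}}=(z-\overline{z})\tfrac{\partial}{\partial z}+a$ and $\overline{\partial}|_{\mathcal{M}^!_{a,b}}=(\overline{z}-z)\tfrac{\partial}{\partial \overline{z}}+b$, complex conjugation carries $\mathcal{M}^!_{a,b}$ to $\mathcal{M}^!_{b,a}$ and intertwines the two operators, i.e.\ $\overline{\partial h}=\overline{\partial}\,\overline{h}$ and $\overline{\overline{\partial} h}=\partial\,\overline{h}$ for $h\in\mathcal{M}^!_{a,b}$ (using $\overline{\partial_z h}=\partial_{\overline{z}}\overline{h}$).

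With this in hand I would set $G_{r,s}:=\overline{\mathcal{H}(f)_{s,r}}\in\mathcal{M}^!_{r,s}$ and conjugate the defining relations of Theorem~\ref{thmHfrs}. Using the intertwining, $\overline{\Lef}=\Lef$, and the fact that $\s$ is an involution (the real Frobenius $F_\infty$ is one; see \S\ref{sectRealFrobandsv}), one gets
\begin{align*}
\partial G_{r,s}&=\overline{\overline{\partial}\,\mathcal{H}(f)_{s,r}}=(r+1)\,\overline{\mathcal{H}(f)_{s-1,r+1}}=(r+1)\,G_{r+1,s-1}\qquad(1\le s\le n),\\
\overline{\partial} G_{r,s}&=\overline{\partial\,\mathcal{H}(f)_{s,r}}=(s+1)\,\overline{\mathcal{H}(f)_{s+1,r-1}}=(s+1)\,G_{r-1,s+1}\qquad(1\le r\le n),\\
\partial G_{n,0}&=\overline{\overline{\partial}\,\mathcal{H}(f)_{0,n}}=\overline{\Lef\,\overline{\s(f)}}=\Lef\,\s(f),\\
\overline{\partial} G_{0,n}&=\overline{\partial\,\mathcal{H}(f)_{n,0}}=\overline{\Lef\,f}=\Lef\,\overline{f}=\Lef\,\overline{\s(\s(f))}.
\end{align*}
Since $\s(f)$ is again a (weak) cusp form in the Hecke eigenspace of $f$, the family $\mathcal{H}(\s(f))_{r,s}$ is defined, and the display shows that $(G_{r,s})_{r+s=n}$ satisfies precisely the system that Theorem~\ref{thmHfrs} attaches to $\s(f)$.

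It then remains to invoke uniqueness: the difference of two families in $\bigoplus_{r+s=n}\mathcal{M}^!_{r,s}$ obeying this system assembles, via the dictionary of \S\ref{vectvaluedandFrs}, into a modular equivariant section $E\colon\HH\to V_n\otimes\C$ with $\partial E/\partial z=\partial E/\partial\overline{z}=0$, and by Lemma~\ref{lemEsection} such an $E$ vanishes for $n>0$; the case $n=0$ is vacuous, since there is no cuspidal form of weight $2$ for $\SL_2(\Z)$. Hence $G_{r,s}=\mathcal{H}(\s(f))_{r,s}$, i.e.\ $\overline{\mathcal{H}(f)_{s,r}}=\mathcal{H}(\s(f))_{r,s}$, and interchanging $r$ and $s$ yields the assertion.

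I do not anticipate a genuine obstacle here; the only points requiring care are the bookkeeping of how conjugation permutes the modular weights and swaps $\partial\leftrightarrow\overline{\partial}$, and the observation that it is exactly $\s^2=\id$ which makes the two endpoint conditions match up. As an alternative route, the identity can be checked by direct computation: conjugating the definition $\mathcal{H}(f)=p^{-1}(\omega_f^-\mathcal{I}_f-\omega_f^+\mathcal{R}_f)$ and using the relations $\overline{R_{r,s}}=R_{s,r}$, $\overline{I_{r,s}}=-I_{s,r}$ of the preceding corollary together with the explicit formula~(\ref{introsf}) for $\s(f)$, one finds that $\overline{\mathcal{H}(f)_{r,s}}$ and $\mathcal{H}(\s(f))_{s,r}$ both reduce to $-p^{-1}\big(\omega_f^-I_{s,r}+\omega_f^+R_{s,r}\big)$.
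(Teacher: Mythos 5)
Your argument is correct and is exactly the one the paper intends (the corollary is stated without proof immediately after the remark that $\mathcal{H}(f)_{r,s}$ is characterised by its differential system via the uniqueness in Lemma~\ref{lemEsection}): conjugating the system of Theorem~\ref{thmHfrs} swaps $\partial\leftrightarrow\overline{\partial}$ and the weights, and $\s^2=\id$ makes the boundary conditions match those for $\s(f)$. The direct computation you sketch as an alternative also checks out, with both sides equal to $-p^{-1}(\omega_f^- I_{s,r}+\omega_f^+ R_{s,r})$.
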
 

\begin{cor}   Suppose that $f \in S_n$ is a cuspidal  Hecke eigenform. The constant term in $\mathcal{H}(f)_{r,s}$ is
 proportional to the Petersson norm of $f$ times $\Lef^{-n}$
$$\mathcal{H}(f)^0 \in   \{f, \s(f) \}\, K_{\llam} \Lef^{-n}\ .$$
\end{cor}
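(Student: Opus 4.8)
The plan is to reduce the corollary to a statement about the single scalar $\alpha\in\C$ governing the constant part of $\mathcal{H}(f)$, and then to read off $\alpha$ from the Hecke-theoretic formula (\ref{alphaformula}). By Theorem \ref{thmHfrs}, $\mathcal{H}(f)_{r,s}$ is the system of modular primitives attached to the pair $(f,\s(f))$, so the Corollary following (\ref{Xrsexplicitformula}) gives
$$\mathcal{H}(f)_{r,s} = \tfrac{a_0(f)}{n+1}\,\Lef + \alpha\,(-1)^r\tbinom{n}{r}\Lef^{-n} + R_{r,s}(f) + \overline{R_{s,r}(\s(f))}$$
for a unique $\alpha\in\C$. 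Since $f$ is cuspidal, $a_0(f)=0$, and since every $f^{(k)}$ has vanishing constant Fourier coefficient, the terms $R_{r,s}(f)$ and $\overline{R_{s,r}(\s(f))}$ contribute nothing to the constant part; hence the constant part of $\mathcal{H}(f)_{r,s}$ equals $\alpha\,(-1)^r\binom{n}{r}\Lef^{-n}$, and the corollary is equivalent to the assertion $\alpha\in\{f,\s(f)\}\,K_{\llam}$, after which (\ref{sff}) and the remark following it identify $\{f,\s(f)\}$, up to a nonzero rational, with the Petersson norm of $f$.

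To compute $\alpha$ I would apply (\ref{alphaformula}): for $m$ sufficiently large, $\alpha = (a_0(\psi_m)+\overline{a_0(\phi_m)})/(\sigma_{n+1}(m)-\lambda_m)$, where $\psi_m,\phi_m\in M^!_{-n}$ are as in (\ref{psiandphi}) with $(f,g)=(f,\s(f))$, and the denominator is nonzero for large $m$ (here $S_{n+2}\neq 0$ forces $n\geq 1$, necessarily even). The key elementary point is that the Bol operator $D^{n+1}$ is injective on $M^!_{-n}$: its kernel consists of constant $q$-series, and a nonzero constant is not modular of weight $-n$ when $n\geq1$. By (\ref{IDforBol}) the relations (\ref{psiandphi}) say that $(T_m-\lambda_m)f$ and $(T_m-\lambda_m)\s(f)$ are, up to nonzero rational scalars, $D^{n+1}\psi_m$ and $D^{n+1}\phi_m$. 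Since $f$ is a genuine Hecke eigenform, $(T_m-\lambda_m)f=0$, so $\psi_m=0$ by injectivity and $a_0(\psi_m)=0$. For $\phi_m$: writing $\s(f)=c_1f+c_2f'$ with $c_2=2\omega_f^+\omega_f^-/(\eta_f^+\omega_f^--\eta_f^-\omega_f^+)$ as in (\ref{introsf}), one has $(T_m-\lambda_m)\s(f)=c_2(T_m-\lambda_m)f'$; by (\ref{Tmfprime}), $(T_m-\lambda_m)f'\in D^{n+1}M^!_{-n}(K_{\llam})$, so its unique $D^{n+1}$-preimage $\psi'_m$ lies in $M^!_{-n}(K_{\llam})$, whence $\phi_m$ is a rational multiple of $c_2\psi'_m$ and $a_0(\psi'_m)\in K_{\llam}$.

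Combining these, $\alpha$ equals $\overline{c_2}$ times $a_0(\psi'_m)/(\sigma_{n+1}(m)-\lambda_m)$ up to a nonzero rational. Since the periods and quasi-periods $\omega_f^\pm,\eta_f^\pm$ are real (cf.\ the discussion of the period matrix), $c_2\in\R$ and $\overline{c_2}=c_2$; and since $a_0(\psi'_m),\lambda_m\in K_{\llam}\subset\R$ while $\sigma_{n+1}(m)\in\Z$, the factor $a_0(\psi'_m)/(\sigma_{n+1}(m)-\lambda_m)$ lies in $K_{\llam}$. Therefore $\alpha\in c_2K_{\llam}$. Finally, using $\det(\mathrm{P}_f)=i(\eta_f^+\omega_f^--\eta_f^-\omega_f^+)$ together with (\ref{sff}) one gets $c_2=\{\s(f),f\}=-\{f,\s(f)\}$, so $\alpha\in\{f,\s(f)\}K_{\llam}$, as required; the consistency of this value over different $m$ follows from (\ref{TmandTnonpsi}).

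I expect no deep obstacle: the argument is essentially bookkeeping around (\ref{alphaformula}), and the only point needing genuine care is the well-definedness of $\psi_m$ and $\psi'_m$ — that is, the injectivity of $D^{n+1}$ on $M^!_{-n}$ for $n\geq1$ — together with keeping straight which period quantities are real and which belong to $K_{\llam}$. A more self-contained alternative would be to compute the constant parts of $\mathcal{I}_f$ and $\mathcal{R}_f$ directly from their definitions, which amounts to evaluating the constant Fourier coefficients of the Eichler integrals $\mathcal{F}_f,\mathcal{F}_{f'}$ (i.e.\ period integrals of the form $\int\tau^j f\,d\tau$) and of the polynomials $Q,Q'$; but this is considerably more laborious, and the route through (\ref{alphaformula}) is preferable since it reuses the Hecke machinery already developed in \S\ref{sectHecke}.
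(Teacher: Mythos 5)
Your argument is correct and follows the same route as the paper's own (very terse) proof: both reduce to formula (\ref{alphaformula}) and the observation that the coefficient of $f'$ in $\s(f)$ equals $\{\s(f),f\}$ via (\ref{sff}). You have merely made explicit the bookkeeping the paper leaves implicit — the vanishing of $\psi_m$ via injectivity of $D^{n+1}$ on $M^!_{-n}$, the realness of the periods, and the $K_{\llam}$-rationality of $a_0(\psi'_m)$ — all of which checks out.
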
 
\begin{proof}   This follows from (\ref{alphaformula}) since the coefficient of $f'$ in $\s(f)$ is proportional to $\{f,\s(f)\}$. The latter can be interpreted as the Petersson norm via  (\ref{sff})  and the comments which follow.
 \end{proof}

\begin{cor} Every modular form  admits a modular primitive  in $\mathcal{M}^!$.
   \end{cor}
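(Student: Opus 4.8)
The plan is to reduce an arbitrary $f \in M^!_{n+2}$ to the three kinds of building blocks supplied by the canonical decomposition of $M^!_{n+2}$, write down an explicit modular primitive of $\Lef f$ for each block, add up using the $\C$-linearity of $\partial$, and finally invoke Proposition \ref{propweaklift} to upgrade the single function $X_{n,0}$ to the whole harmonic family $X_{r,s}$.

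First I would fix the weight. There is nothing to prove for odd weight at level one, and by Theorem \ref{M11algdR} the group $H^1_{dR}(\mathcal{M}_{1,1};\V_0)$ vanishes, so it is enough to treat $f\in M^!_{n+2}$ with $n\geq 0$ even. Using Theorem \ref{M11algdR} together with the splitting into Eisenstein and cuspidal parts, and tensoring the cuspidal part with $\C$, I would write
$$ f \;=\; D^{n+1}\xi \;+\; \lambda\,\GE_{n+2} \;+\; \sum_i c_i\, f_i\,, $$
where $\xi\in M^!_{-n}$, $\lambda\in\C$, $c_i\in\C$, and the $f_i$ range over the weak Hecke eigenforms $f_{\llam},f'_{\llam}$ spanning the cuspidal part over $\overline{\Q}$; when $n=0$ only the first term survives.

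Next I would produce a modular primitive for each summand. For the Bol term, regarding $\xi$ as an element of $\M^!_{-n,0}$, equation $(\ref{EquationforBol})$ together with $[\partial,\Lef]=0$ gives
$$ \Lef\,D^{n+1}\xi \;=\; \frac{1}{2^{n+1}}\,\partial^{n+1}\big(\Lef^{-n}\xi\big) \;=\; \partial\Big(\frac{1}{2^{n+1}}\,\partial^{n}\big(\Lef^{-n}\xi\big)\Big)\,, $$
so $2^{-(n+1)}\partial^{n}(\Lef^{-n}\xi)\in\M^!_{n,0}$ is a modular primitive of $\Lef\,D^{n+1}\xi$. For the Eisenstein term, the real analytic Eisenstein series $\mathcal{E}_{n,0}$ recalled in the introduction already satisfies $\partial\,\mathcal{E}_{n,0}=\Lef\,\GE_{n+2}$. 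For the cuspidal terms, Theorem \ref{thmHfrs} gives $\partial\,\mathcal{H}(f_i)_{n,0}=\Lef\, f_i$ directly. Adding up, $X_{n,0}=2^{-(n+1)}\partial^{n}(\Lef^{-n}\xi)+\lambda\,\mathcal{E}_{n,0}+\sum_i c_i\,\mathcal{H}(f_i)_{n,0}$ is a modular primitive of $\Lef f$ lying in $\M^!$, and Proposition \ref{propweaklift} then produces the full family $X_{r,s}$.

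The only point requiring any care is the first step: the Hecke eigenspace decomposition of the cuspidal part is a priori defined only over $\overline{\Q}$, so one must observe that this causes no difficulty, since every operation involved — $\partial$, multiplication by $\Lef$, and the assignments $f\mapsto \mathcal{H}(f)$ and $f\mapsto \mathcal{E}$ — is $\C$-linear. Everything else is a sequence of one-line verifications from results already established, so I do not expect a genuine obstacle.
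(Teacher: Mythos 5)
Your proof is correct and follows essentially the same route as the paper: the paper's entire proof is the one-line observation that every modular form of integral weight is a linear combination of Eisenstein series and cuspidal Hecke eigenforms, whose primitives are supplied by $\mathcal{E}_{n,0}$ and by Theorem \ref{thmHfrs} respectively. The one genuine addition in your argument is the explicit primitive $2^{-(n+1)}\partial^{n}(\Lef^{-n}\xi)$ for the Bol component $\Lef\, D^{n+1}\xi$ — a correct computation via (\ref{EquationforBol}) and $[\partial,\Lef]=0$ — which the paper's proof does not address and which upgrades the statement from holomorphic modular forms to all weakly holomorphic ones.
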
 
   \begin{proof} Every modular form of integral weight is a linear combination of Eisenstein series and cuspidal  Hecke eigenforms. 
   \end{proof}

\subsection{Vanishing constant term}
The space  $H^{dR} \otimes \C$ decomposes into eigenspaces with respect to the map $\s$:
$$H^{dR}\otimes \C =   (H^{dR}\otimes \C)^+ \oplus (H^{dR}\otimes \C )^- \ .$$
They are respectively the preimages of $H_B^{\pm} \otimes \C$ under the comparison isomorphism. 

An element $f \in  (H^{dR}\otimes \C)^+$ satisfies  $\s(f) = f$, and  hence $\mathcal{H}(f)_{r,s}$ is proportional to the `real' function $\mathcal{R}(f)_{r,s}$. 

An element $f \in  (H^{dR}\otimes \C)^-$ satisfies  $\s(f) = -  f$, and  hence $\mathcal{H}(f)_{r,s}$ is proportional to the 
 `imaginary' function  $\mathcal{I}(f)_{r,s}$.  The latter satisfies $\mathcal{I}_{r,s}^0 =0 $  by lemma \ref{lemEsection} and has no constant part.  It is therefore cuspidal: $\mathcal{I}(f)_{r,s } \in \mathcal{S}^!$.

\subsection{Remark on Galois action}
The Galois group $\mathrm{Gal}(\overline{\Q}/ \Q)$ acts on the space $M^! (\overline{\Q})$ via  its action on  coefficients. 
In particular, it preserves the  subspace  of functions $f \in M^!_{n}(\overline{\Q})$  such that $\mathrm{ord}_{\infty}(f) \geq - \dim S_n$.
 It follows that the decomposition 
 $$M^!_{n+2}   =  D^{n+1} M^!_{-n} \oplus M^!_{n+2}/D^{n+1} M^!_{-n} $$
 is $\mathrm{Gal}(\overline{\Q}/ \Q)$-equivariant. 
 Since $\mathcal{H}$ is injective, we obtain a naive action of the Galois group via $ \sigma \,\mathcal{H}( f)_{r,s}:= \mathcal{H}(\sigma f)_{r,s}$ for all $\sigma \in \mathrm{Gal}(\overline{\Q}/ \Q)$.    Since the coefficients of $\mathcal{H}_{r,s}$ are not algebraic, this is not induced by an action on Fourier coefficients.

 This could be promoted to an action on Fourier coefficients by the following sleight of hand. Since $H^1_{dR}(\mathcal{M}_{1,1}; \V_n)$ is the de Rham realisation of a pure motive (it would be enough to consider it as an object in a Tannakian category of realisations) it admits an action of a (reductive) algebraic group scheme $G$ over $\Q$. The group $G$ acts  on the  corresponding motivic periods \cite{NotesMot}.  Extending scalars gives
 $$H^1_{dR}(\mathcal{M}_{1,1}; \V_n) \otimes_{\Q}  \overline{\Q} = \bigoplus_{\llam}  H^{dR}_{\llam}\otimes_{K_{\llam}} \overline{\Q}$$
 and so $G\times_{\Q} \overline{\Q}$ is an extension of the quotient   of $\mathrm{Gal}(\overline{\Q}/\Q)$ which acts by permuting the factors,  by a group $H$ contained in the subgroup of $\prod_{\llam} \mathrm{GL}( H^{dR}_{\llam})$ of automorphisms with the same determinant on each factor.  A choice of basis of remark \ref{remcanbasis} gives a splitting of this extension (since the conditions $a_1(f)=1$ or $\{f',f'\}=0$ are  Galois-equivariant). 
 We can  define  modular lifts $\mathcal{H}(f)^{\mm}_{r,s}$ by  replacing periods with motivic periods $\omega^{\mm, \pm}_f$, $\eta^{\mm, \pm}_f$. The group $G(\overline{\Q})$ will act upon the   $\mathcal{H}(f)^{\mm}_{r,s}$ via its action on  the coefficients in the expansion  $\mathcal{H}(f)^{\mm}_{r,s}$. This  gives back the naive action of $\mathrm{Gal}(\overline{\Q}/\Q)$ above (which is independent of the choice of splitting).

\section{Example: Real analytic version of Ramanujan's function $\Delta$} \label{sectExample}

\subsection{Weakly holomorphic cusp forms in weight $12$}
Let $n=10$. Let $\Delta$ denote Ramanujan's cusp form of weight 12
 $$\Delta = q\prod_{n\geq 1}(1-q^n)^{24}= q  -24 \, q^2 +252\, q^3+  1472\, q^4  + 4830 q^5 -6048q^6+ \ldots $$
 Since $\dim S_{12} =1$, it is a Hecke eigenform with eigenvalues  in $\Z$. 
 There exists a unique weakly holomorphic modular form $\Delta' \in M^!_{12}$ which has a pole of order at most $1$ at the cusp, and whose Fourier coefficients $a_0$, $a_1$ vanish. Explicitly,  
 $$ \Delta'= q^{-1} +47709536\, q^2+39862705122\, q^3+7552626810624\, q^4+ \ldots$$
 It satisfies $\{\Delta, \Delta'\}=1$. 
 It follows  that there is a single cuspidal Hecke eigenspace, and that it has the  de Rham basis:
 $$H^{dR} = H^1_{\mathrm{cusp}, dR}(\mathcal{M}_{1,1}; \V_{10}) = \Q \Delta' \oplus \Q \Delta\ .$$
 The function $\Delta'$ is a weak Hecke eigenform with the same eigenvalues as $\Delta$: 
 $$(T_m - \lambda_m )\Delta' = D^{11} p_{m} \qquad \hbox{ for all } m \geq 1 $$
 for some $p_m \in M_{10}^!$.
 For example,  $\lambda_2 = -24$ and hence
 $(T_2  -\lambda_2) \Delta' =   D^{11} \, \psi_2$, 
 where 
 $$ p_2 = 24 \, \GE_{14} \Delta^{-2} = -q^{-2}-24 q^{-1} +196560+47709536 q + \ldots  \ .$$
In the notations of proposition \ref{propHeckeonXrs}, we have $\psi_2 = 10! \,2^{-11} p_2$ by (\ref{IDforBol}).
 
 \subsection{Cocycles}
Let $P^{\pm} \in Z^1(\Gamma; V_{10})$  be the Hecke-invariant cocycles $P^{\pm}: \Gamma \rightarrow V_{10}$  which are uniquely determined by 
$P^{\pm}_T=0$ and 
 \begin{eqnarray} 
 P^+_S   &= &   \frac{36}{691} (Y^{10}-X^{10}) +X^2 Y^2 (X^2-Y^2)^3  \nonumber \\ 
 P^-_S & = & 4 X^9 Y-25 X^7 Y^3+42 X^5 Y^5-25 X^3 Y^7+4 X Y^9 \nonumber 
 \end{eqnarray} 
 Their Haberlund inner product is $\{P^+, P^-\}=1$.  They provide a Betti basis
 $$H^B=  H^1_{\mathrm{cusp}, B}(\mathcal{M}_{1,1}; \V_{10}) = \Q P^+ \oplus \Q P^-\ .$$
 
 \subsection{Periods}
 
 Following the method given in \S\ref{sectWeakHol}, we can easily compute the period matrix (\ref{periodmatrix}) in this basis.  We find that for all $\gamma \in \Gamma$, 
 $$\int_{\gamma}  (2\pi i)^{11} \Delta(z) (X-zY)^{10} dz =  \omega^+ P^+_{\gamma} + \omega^- P^-{\gamma}$$
 where  
 $$\omega_+ =  -68916772.809595194754\ldots \quad , \quad   \omega_- =  -5585015.3793104018668\ldots  $$
which agree with the numerical values for the periods of $\Delta$ given in the literature. The periods of $\Delta'$, on the other hand,  are $\eta_+ , i \eta_-$ where 
 $$\eta_+  =   127202100647.17709477 \ldots \quad , \quad  \eta_- = 10276732343.649132750 \ldots$$
 I could find no reference for these  values for comparison. 
 In accordance with proposition 5.6 of \cite{BH}, we can indeed verify numerically that
 $$\det \begin{pmatrix} \eta_+ & \omega_+ \\  i \eta_- &  i \omega_- \end{pmatrix} =  10!  \times (2 \pi i)^{11} \ . $$
 The Petersson norm  of $\Delta$, in its standard normalisation,  is
 $$  \frac{ - 2 \omega_+ \omega_{-}}{2^{11} (2\pi i)^{22}} = 0.00000103536205 \ldots  >0  $$

\subsection{Single-valued involution}
The single-valued period matrix is 
$$ \frac{i}{10!   (2 \pi i)^{11} }  \begin{pmatrix} \eta^+ \omega^- + \eta^- \omega+ & 2 \omega^+ \omega^- \\ 
2 \eta^+ \eta^-  & -( \eta^+ \omega^- + \eta^- \omega+)  \end{pmatrix}   
= \begin{pmatrix} 648.84093\ldots  & -0.3520770\ldots \\
 1195742.7\ldots &  -648.84093\ldots \end{pmatrix} $$
in the basis $\Delta, \Delta'$. It does not depend on the choice of Betti basis. 
Therefore 
$$\s(\Delta) =   \sigma  \Delta'   +    \tau \Delta  \qquad \hbox{ where } \sigma =  -0.35207\ldots \quad , \quad \tau=  -648.84\ldots$$
For convenience, we evaluate the ratio
$$  \rho = \frac{\tau}{\sigma} = - \frac{ \eta^+ \omega^- + \eta^- \omega^+}{2 \omega^+ \omega^-} = -\frac{1}{2} \Big( \frac{\eta^+}{\omega^+} +  \frac{\eta^-}{\omega^-}\Big) = 1842.8947269\ldots  $$

 \subsection{The constant term}
Since $\sigma_{11}(2) = 2049$, we check that
$$\sigma_{11}(2) - \lambda_2 = 2073 = 3. 691$$
is non-zero and therefore since $\psi_2 = 10!\, 2^{-11} p_2$,  
$$\frac{a_0 (\psi_2)}{\sigma_{11}(2) - \lambda_2} =  \frac{10!}{2^{11}} \, \frac{196560}{3. 691} =  \frac{10!}{2^{11}} \frac{\, 7! \, 13}{ 691} \ .$$
The $691$ in the denominator is a consequence of the congruence
$\Delta \equiv  \GE_{12} \pmod{691}$. 
Formula (\ref{alphaformula})  therefore that  implies that 
$$\alpha =  \frac{\, 7! \, 13}{ 691} \frac{10!}{2^{11}} \,  \sigma    =   \frac{\, 7! \, 13}{ 691}   \frac{2 i \omega^+ \omega^-}{ (4\pi i)^{11}} \ . $$

\subsection{Real analytic cusp form} The  real analytic cusp forms $\mathcal{H}(\Delta)_{r,s}$ for $r+s =10$ can be written down explicitly from the formulae given in the introduction.

\subsection{The mock modular form $M_{\Delta}$}
Denote the Fourier coefficients of $\Delta, \Delta'$ by $a_n, a_n'$.  Our formula for  the `mock' modular form is
$$M_{\Delta} = \alpha + \frac{10!}{2^{11}} \sum_{n}  \frac{\sigma a'_n + \tau a_n}{n^{11}} q^n$$
where $\sigma, \tau$ are the periods given above.
In order to compare with Ono's formula, let us rescale by setting 
$$M'_{\Delta} = - \frac{11 \times  2^{11} }{\sigma}  M_{\Delta} =   11! \Big(    - \frac{\, 7! \, 13}{ 691}  +    \sum_{n}  \frac{ a'_n + \rho a_n}{n^{11}}q^n \Big)\ .$$
Its first five Fourier coefficients are given exactly by 
$$ 11! \Big(  q^{-1}-\frac{65520}{691}-\rho q+\Big(\frac{3}{256}\rho-\frac{1490923}{64}\Big) q^2+ \Big(-\frac{28}{19683}\rho-\frac{164044054}{729}\Big)q^3+ \ldots 
\Big)  \ .$$
 By uniqueness, this function coincides with the mock modular form for $\Delta$ given in \cite{Ono} and discussed in \cite{BOR}. We have verified, by substituting the above numerical value of $\rho$, that this agrees with the  computation in \cite{Ono} to the accuracy given in that paper.

Ono's formula  \cite{Ono} for its nth Fourier coefficient, for $n>0$, is:
$$ - 2 \pi \, \Gamma(12) n^{-\frac{11}{2}}\, \sum_{c=1}^{\infty} \frac{K(-1,n,c)}{c}  I_{11} \Big( \frac{4 \pi \sqrt{n}}{c}\Big)$$
where $K$ is a Kloosterman sum and $I$ is a Bessel function.
Combining this with our expression for its Fourier coefficients proves corollary \ref{corKloost}. 

\bibliographystyle{plain}
\bibliography{main}

\end{document}